\newcommand\cyr{%
\renewcommand\rmdefault{wncyr}%
\renewcommand\sfdefault{wncyss}%
\renewcommand\encodingdefault{OT2}%
\normalfont
\selectfont}
\DeclareTextFontCommand{\textcyr}{\cyr}
\newcommand{\mint}{{\times}\kern-0.89em{\int}}
 \numberwithin{equation}{section}
\newtheorem{thm}{Theorem}[section]
\newtheorem{cor}[thm]{Corollary}
\newtheorem{lem}[thm]{Lemma}
\newtheorem{prop}[thm]{Proposition}
\newtheorem{assu}[thm]{Assumption}
\newtheorem{conj}[thm]{Conjecture}
\theoremstyle{definition}
\newtheorem{defn}[thm]{Definition}
\newtheorem{rem}[thm]{Remark}
\newtheorem{notation}[thm]{Notation}
\newcommand{\Sh}{\mathrm{Sh}}
\newcommand{\et}{\textup{ét}}
\newcommand{\N}{\mathbb{N}}
\newcommand{\Zp}{\mathbb{Z}_p}
\newcommand{\Fc}{\mathcal{F}}
\newcommand{\Oc}{\mathcal{O}}
\newcommand{\Ab}{\mathrm{Ab}}
\newcommand{\HH}{\mathrm{H}}
\newcommand{\Spec}{\mathrm{Spec}}
\begin{document}
\title{On the Iwasawa invariants of Kato's zeta elements for modular forms}
\author{Chan-Ho Kim}
\address{
(Chan-Ho Kim) Department of Mathematics and Institute of Pure and Applied Mathematics,
Jeonbuk National University,
567 Baekje-daero, Deokjin-gu, Jeonju, Jeollabuk-do 54896, Republic of Korea
}
\email{chanho.math@gmail.com}
\author{Jaehoon Lee}
\address{(Jaehoon Lee) KAIST, 291 Daehak-ro, Yuseong-gu, Daejeon 34141, Republic of Korea}
\email{jaehoon.lee900907@gmail.com}
\author{Gautier Ponsinet}
\address{(Gautier Ponsinet) Universit\"{a}t Duisburg–Essen, Fakult\"{a}t f\"{u}r Mathematik, Thea-Leymann-Str. 9, 45127 Essen, Germany}
\email{gautier.ponsinet@uni-due.de}
\thanks{Chan-Ho Kim was partially supported
by a KIAS Individual Grant (SP054103) via the Center for Mathematical Challenges at Korea Institute for Advanced Study,
by the National Research Foundation of Korea(NRF) grant funded by the Korea government(MSIT) (No. 2018R1C1B6007009, 2019R1A6A1A11051177, RS-2024-00339824),
by research funds for newly appointed professors of Jeonbuk National University in 2024, and
by Global-Learning \& Academic research institution for Master’s$\cdot$Ph.D. Students, and Postdocs (LAMP) Program of the National Research Foundation of Korea (NRF) funded by the Ministry of Education (No. RS-2024-00443714).
Jaehoon Lee is partially supported by KAIST Advanced Institute for Science-X Post-Doc Fellowship.
Gautier Ponsinet thanks the Max Planck Institut for Mathematics for its hospitality and support.
}

\date{\today}
\subjclass[2010]{11R23 (Primary); 11F33 (Secondary)}
\keywords{Iwasawa theory, Kato's Euler systems, Iwasawa main conjecture, congruences of modular forms}
\maketitle
\begin{abstract}
  We study the behavior under congruences of the Iwasawa invariants of the Iwasawa modules which appear in Kato's main conjecture without $p$-adic $L$-functions.
  It generalizes the work of Greenberg--Vatsal, Emerton--Pollack--Weston, B.D. Kim, Greenberg--Iovita--Pollack, and one of us simultaneously.
  As a consequence, we establish the propagation of Kato's main conjecture for modular forms of higher weight at \emph{arbitrary} good prime under the assumption on the mod $p$ non-vanishing of Kato's zeta elements.
  The application to the $\pm$ and $\sharp/\flat$-Iwasawa theory for modular forms is also discussed.
\end{abstract}
\setcounter{tocdepth}{1}
\tableofcontents

\section{Introduction}
\subsection{Overview}
Fix once and for all a prime number \(p \geq 5\).
In Iwasawa theory for elliptic curves and modular forms, the techniques of congruences of modular forms have played important roles.
Especially, in their ground-breaking work \cite{gv}, Greenberg and Vatsal observed that both algebraic and analytic Iwasawa invariants of elliptic curves with good ordinary reduction over the cyclotomic $\mathbb{Z}_p$-extension $\mathbb{Q}_{\infty}$ of $\mathbb{Q}$ can be described in terms of the information of the residual representations and the local behavior at bad reduction primes under the $\mu=0$ assumption.
As a consequence, it is proved that the Iwasawa main conjecture for an elliptic curve implies the Iwasawa main conjecture for all congruent elliptic curves when one-sided divisibility is given and $\mu=0$.
This fundamental idea has been generalized to \emph{a variety of} settings including Hida families, elliptic curves with supersingular reduction with $a_p=0$, arbitrary $\mathbb{Z}_p$-extensions including anticyclotomic $\mathbb{Z}_p$-extensions of imaginary quadratic fields with and without Heegner hypothesis, modular forms at non-ordinary primes, and their mixtures \cite{weston-iwasawa-invariants-galois-deformations, epw, ochiai-two-variable, greenberg-iovita-pollack, bdkim-supersingular-invariants,hachimori,pw-mu,choi-kim,kim-summary,castella-kim-longo,delbourgo-lei,kidwell-selmer,ponsinet-signed,hatley-lei-congruences,fouquet-wan}.

In this article, we give a rather different and unified approach to realize the idea of Greenberg--Vatsal for modular forms of higher weight at \emph{arbitrary} good primes over the \emph{full} cyclotomic extension $\mathbb{Q}(\zeta_{p^\infty})$ of $\mathbb{Q}$.
In order to do this, we directly work with Kato's zeta elements and Kato's main conjecture without $p$-adic $L$-functions \cite{kato-euler-systems}.
As corollaries, we can prove Greenberg--Vatsal type results for
both good ordinary and non-ordinary cases \emph{simultaneously} including
$\pm$-Iwasawa theory \`{a} la Kobayashi--Pollack \cite{kobayashi-thesis, pollack-thesis} and Lei \cite{lei-compositio}
 and $\sharp/\flat$-Iwasawa theory \`{a} la Sprung \cite{sprung-ap-nonzero} and Lei--Loeffler--Zerbes \cite{lei-loeffler-zerbes_wach}.
Note that the construction of \emph{integral} $p$-adic $L$-functions of modular forms depends genuinely on the reduction type and we do not have to consider this issue at all.

The key ingredients include an extensive use of the localization exact sequence in \'{e}tale cohomology as well as the mod $p$ multiplicity one and Ihara's lemma.
Since Fontaine--Laffaille theory is implicitly used to obtain the mod $p$ multiplicity one and Ihara's lemma, the weight $k$ of modular forms we consider is required to satisfy $2 \leq k \leq p-1$. The weight assumption can be removed in the semi-stable ordinary case thanks to the work of Vatsal.

We also point out where the classical argument breaks down and how to overcome this obstruction.
In the argument of Greenberg--Vatsal and its successors, the relevant Selmer group over the Iwasawa algebra has no proper Iwasawa submodule of finite index and it is essentially used to reveal the following phenomenon.
\begin{quote}
Let $f$ be a newform, $\overline{\rho}_f$ the residual representation, and $\Sigma_0$ a finite set of primes consisting of all bad reduction primes. In both algebraic and analytic sides, we have the following equality:
\begin{align*}
``\textrm{ $\lambda$-invariant of $f$ } = & \textrm{ $\lambda$-invariant of $\overline{\rho}_f$} \\
& + \textrm{ $\sum_{\ell \in \Sigma_0}$ $\lambda$-invariant of the local behavior of $f$ at $\ell$}"
\end{align*}
when $\mu=0$.
Here, the algebraic side means the Selmer group part and the analytic side means the $p$-adic $L$-function part in the Iwasawa main conjecture of Mazur--Greenberg type.
\end{quote}
In our setting, we do \emph{not} expect that the relevant Iwasawa modules have no finite Iwasawa submodule in general. Thus, the above formula would not hold as it stands in our setting. However, we are still able to prove the following type of statement:
\begin{quote}
Fix a 2-dimensional odd irreducible mod $p$ representation $\overline{\rho}$.
Let $S_k(\overline{\rho})$ be the set of newforms of weight $k$ such that the residual representation is isomorphic to $\overline{\rho}$ and $p$ does not divide the levels of the newforms. Suppose that the $\mu$-invariant of one form in  $S_k(\overline{\rho})$  in the zeta element side vanishes.
Then, for all $f \in S_k(\overline{\rho})$, we have the following statements:
\begin{itemize}
\item The $\mu$-invariants of $f$ in both the zeta element and $\mathrm{H}^2$-sides vanish;
\item $\textrm{($\lambda$-invariant of $f$ in the $\mathrm{H}^2$-side)} - \textrm{($\lambda$-invariant of $f$ in the zeta element side)}$ is constant.
\end{itemize}
Here, the zeta element side means the Kato's Euler system part and the $\mathrm{H}^2$-side means the the second Iwasawa cohomology part in the Iwasawa main conjecture of Kato type.
\end{quote}
In other words, although we are not able to see how Iwasawa invariants vary in each side under congruences, the stability of the difference of Iwasawa invariants is strong enough to deduce a Greenberg--Vatsal type result for Kato's main conjecture.

The following non-exhaustive list shows how much the idea of Greenberg--Vatsal is generalized. (See \S\ref{subsec:iwasawa-cohomologies} for the difference between $\mathbb{Q}(\zeta_{p^\infty})$ and $\mathbb{Q}_\infty$.)
\begin{itemize}
\item In \cite{gv}, elliptic curves with good ordinary reduction over $\mathbb{Q}_{\infty}$ are studied;
\item In \cite{epw}, Hida families over the full cyclotomic extension $\mathbb{Q}(\zeta_{p^\infty})$ are studied;
\item In \cite{bdkim-supersingular-invariants}, elliptic curves with good supersingular reduction ($a_p=0$) over $\mathbb{Q}_{\infty}$ are studied (the algebraic side only);
\item In \cite{greenberg-iovita-pollack}, modular forms of weight two at non-ordinary primes over $\mathbb{Q}_{\infty}$ are studied;
\item In \cite{ponsinet-signed}, modular forms of higher weight in the Fontaine--Laffaille range at non-ordinary primes  over $\mathbb{Q}_{\infty}$ are studied (the algebraic side only).
\end{itemize}
Indeed, some of the above work deal with more general $\mathbb{Z}_p$-extensions.
Our main theorem (Theorem \ref{thm:main-theorem}) covers and strengthens all the above results simultaneously for modular forms of weight $k$ with $2 \leq k \leq p-1$ over the full cyclotomic extension $\mathbb{Q}(\zeta_{p^\infty})$.

The rest of this article is organized as follows.
\begin{enumerate}
\item
In the rest of this section, we recall the convention of modular Galois representations, Iwasawa cohomology, Kato's zeta elements and the Iwasawa main conjecture without $p$-adic $L$-functions closely following \cite{kato-euler-systems}.
\item
In $\S$\ref{sec:main-results}, we state our main result (Theorem \ref{thm:main-theorem} and Theorem \ref{thm:main-theorem-ordinary}) and discuss the applications to other types of Iwasawa theory.
\item In $\S$\ref{sec:local}, we recall the notion of Iwasawa-theoretic Euler factors  at $\ell \neq p$ in various settings and study their Iwasawa invariants.
\item In $\S$\ref{sec:zeta-elements}, we study the mod $p$ behavior of the zeta elements. Theorem \ref{thm:main-theorem}.(1) on the $\mu$-invariant is proved here.
\item In $\S$\ref{sec:H^2}, we study the mod $p$ behavior of the second Iwasawa cohomology.
\item In $\S$\ref{sec:connection}, we prove Theorem \ref{thm:main-theorem}.(2) on the $\lambda$-invariant.
\end{enumerate}
\begin{notation}
We expect the reader is rather familiar with \cite{kato-euler-systems}. We freely use the results and notations of \cite{kato-euler-systems}.
Especially, we follow the notation of  \cite{kato-euler-systems} with only few exceptions.
We denote by $\pi$ instead of $\lambda$ for the uniformizer of coefficient fields in order to avoid the conflict with $\lambda$-invariants. We also denote by $V_f$ and $T_f$ instead of $V_{F_{\lambda}}(f)$ and $V_{\mathcal{O}_{\lambda}}(f)$ for modular Galois representations and their lattices, respectively.
\end{notation}
\subsection{Galois representations}
Let $f = \sum_{n \geq 1} a_n(f) q^n \in S_k(\Gamma_1(N), \psi)$ be a newform with $(N,p)=1$.
We fix embeddings
$\iota_p : \overline{\mathbb{Q}} \hookrightarrow \overline{\mathbb{Q}}_p$ and
$\iota_\infty : \overline{\mathbb{Q}} \hookrightarrow \mathbb{C}$.
Let
$F \coloneqq \mathbb{Q}(a_n(f):n)$ and $F_\pi \coloneqq \mathbb{Q}_p (\iota_p(a_n(f)):n)$, and
$\mathcal{O}_\pi \subseteq F_\pi$ the ring of integers of $F_\pi$, $\pi$ a uniformizer of $F_\pi$, and $\mathbb{F}$ the residue field of $F_\pi$.

Let $S$ be a finite set of places of $\mathbb{Q}$ containing the places dividing $Np$ and $\mathbb{Q}_S$ be the maximal extension of $\mathbb{Q}$ unramified outside $S$. For a field $K$, denote by $G_K$ the absolute Galois group of $K$.
Let
$$\rho_f : \mathrm{Gal}(\mathbb{Q}_S/\mathbb{Q}) \to \mathrm{GL}_2(F_{\pi}) \simeq \mathrm{GL}_{F_\pi}(V_f)$$
be the (cohomological) $\pi$-adic Galois representation associated to $f$ following the convention of \cite[$\S$14.10]{kato-euler-systems}.
\subsubsection{Construction} \label{subsubsec:construction}
We follow \cite[(4.5.1) and $\S$8.3]{kato-euler-systems}.
Let $N \geq 4$ and $\varpi : E \to Y_1(N)$ the universal elliptic curve over the modular curve and
$\mathcal{H}^1_p \coloneqq \mathrm{R}^1\varpi_*\mathbb{Z}_p$ the \'{e}tale $\mathbb{Z}_p$-sheaf on $Y_1(N)$.
We define
$$ V_{k,\mathbb{Z}_p}(Y_1(N)) \coloneqq \mathrm{H}^1_{\mathrm{\acute{e}t}}(Y_1(N)_{\overline{\mathbb{Q}}}, \mathrm{Sym}^{k-2}_{\mathbb{Z}_p}(\mathcal{H}^1_p)) ,$$
and
\[
\xymatrix{
T_f \coloneqq V_{k, \mathbb{Z}_p} (Y_1(N)) \otimes_{\mathbb{T}(N)} \mathbb{T}(N)/\wp_f , & V_f \coloneqq T_f \otimes_{\mathbb{Z}_p}\mathbb{Q}_p
}
\]
where
 $\mathbb{T}(N)$ is the image of the abstract Hecke algebra generated by Hecke operators at all primes in the endomorphism ring of $V_{k,\mathbb{Z}_p}(Y_1(N))$ over $\mathbb{Z}_p$ and $\wp_f$ is the height one prime ideal of $\mathbb{T}(N)$ generated by the Hecke eigensystem of $f$ following \cite[$\S$6.3]{kato-euler-systems}.
When $N <4$, we are still able to construct Galois representations using the $N \geq 4$ case and trace maps.
\subsubsection{Properties}
More explicitly, $\rho_f$ satisfies the following properties:
\begin{enumerate}
\item $\mathrm{det} ( \rho_f ) = \chi^{1-k}_{\mathrm{cyc}} \cdot \psi^{-1}$
where $\chi_{\mathrm{cyc}}$ is the cyclotomic character;
\item for any prime $\ell$ not dividing $Np$, we have
\begin{equation} \label{eqn:char-poly}
\mathrm{det} \left( 1- \rho_f ( \mathrm{Frob}^{-1}_\ell ) \cdot u  : \mathrm{H}^0(I_\ell,  V_f ) \right) = 1 - a_{\ell}(f) \cdot u +  \psi  (\ell) \cdot \ell^{k-1} \cdot u^2
\end{equation}
where $\mathrm{Frob}_\ell$ is the arithmetic Frobenius at $\ell$ in $G_{\mathbb{Q}_\ell} / I_\ell$ and $I_\ell$ is the inertia subgroup of $G_{\mathbb{Q}_\ell}$;
\item for the prime number $p$ lying under $\pi$,  we have
$$\mathrm{det} \left( 1- \varphi \cdot u  : \mathbf{D}_{\mathrm{cris}}  ( V_f ) \right) = 1 - a_{p}(f) \cdot u +  \psi (p) \cdot p^{k-1} \cdot u^2$$
where $\varphi$ is the Frobenius operator acting on $\mathbf{D}_{\mathrm{cris}}  ( V_f )$, Fontaine's crystalline Dieudonn\'{e} module associated to the restriction of $V_f$ to $G_{\mathbb{Q}_p}$ \cite{fontaine-semi-stable}.
\end{enumerate}
For any Galois module $M$ over $\mathbb{Z}_p$ and any integer $k \in \mathbb{Z}$, let $M(k) \coloneqq M \otimes_{\mathbb{Z}_p} \mathbb{Z}_p(k)$ be the $k$-th Tate twist of $M$.

Let $f^* = \sum_{n \geq 1} \overline{a_n(f)} q^n \in S_k(\Gamma_1(N), \overline{\psi})$ be the dual modular form of $f$ where $\overline{(-)}$ means the complex conjugation.
Due to the duality of modular Galois representations \cite[(14.10.1)]{kato-euler-systems}, we also have
\[
\xymatrix{
V_{f^*} \simeq \mathrm{Hom}_{F_\pi} ( V_f , F_\pi)(1-k) , & V_{f^*}(k-r) \simeq \mathrm{Hom}_{F_\pi} ( V_f(r) , F_\pi(1)).
}
\]
Then the Euler factor of $V_{f^*}(r)$ at $\ell$ not dividing $p$ is
$$1 - \overline{a_{\ell}(f)} \cdot \ell^{-r}  +  \overline{\psi}  (\ell) \cdot \ell^{k-1 - 2r} .$$
One can compare this with (\ref{eqn:char-poly}) and $\S$\ref{subsec:euler-factor}.


Let $R$ be any $p$-adic ring including $F_\pi$, $\mathcal{O}_{\pi}$, and $\mathcal{O}_{\pi} / \pi^i$. Then, for any $R$-module $M$, we set $M^* \coloneqq \mathrm{Hom}_R(M, R)$.
Also, the torsion part of $M$ is denoted by $M_{\mathrm{tors}}$.

Let $\overline{\rho} : G_\mathbb{Q} \to \mathrm{GL}_2(\mathbb{F})$ be the residual Galois representation of $T_f$.
We assume the following condition throughout this article.
\begin{assu} \label{assu:image}
The image of $\overline{\rho}$ contains $\mathrm{SL}_2(\mathbb{F}_p)$.
\end{assu}
Under Assumption \ref{assu:image} and the $p \geq 5$ assumption, every Galois-stable $\mathcal{O}_\pi$-lattice of $V_f$ is of form $a \cdot T_f$ with $a \in F^\times_\pi$ \cite[pp. 223]{kato-euler-systems}.

\subsection{Iwasawa cohomologies} \label{subsec:iwasawa-cohomologies}
Let $$\Lambda = \mathcal{O}_\pi \llbracket \mathrm{Gal}( \mathbb{Q}(\zeta_{p^\infty}) /\mathbb{Q} ) \rrbracket \simeq \mathcal{O}_\pi [\Delta] \llbracket \Gamma\rrbracket$$
be the Iwasawa algebra where
$\mathbb{Q}(\zeta_{p^\infty}) = \cup_{n \geq 1} \mathbb{Q}(\zeta_{p^n})$ and
$\mathrm{Gal}( \mathbb{Q}(\zeta_{p^\infty}) /\mathbb{Q} ) \simeq \Delta \times \Gamma$
with
$\Delta \simeq \mu_{p-1}$ and $\Gamma \simeq 1 + p\mathbb{Z}_p$.
Let $\mathbb{Q}_{\infty} \coloneqq \mathbb{Q}(\zeta_{p^\infty})^\Delta$ be the cyclotomic $\mathbb{Z}_p$-extension of $\mathbb{Q}$ and $\mathbb{Q}_n$ the unique extension of $\mathbb{Q}$ in $\mathbb{Q}_{\infty}$ of degree $p^n$.
We decompose $$\Lambda = \bigoplus_{i=0}^{p-2} \Lambda_i$$
where $\Lambda_i \coloneqq \Lambda / (\sigma - \omega^i(\sigma) : \sigma \in \Delta)$ and $\omega$ is the Teichm\"{u}ller character. We fix the notational convention for Galois, \'{e}tale, and Iwasawa cohomologies.
\begin{enumerate}
	\item If $K$ is a field, $L$ is a Galois extension of $K$, and $M$ is a topological $\mathrm{Gal}(L/K)$-module, then we denote by
		$$\mathrm{H}^i(L/K, M) \coloneqq \mathrm{H}^i_\mathrm{cont}(\mathrm{Gal}(L/K), M)$$
		the $i$-th Galois cohomology group (\textit{i.e.} the $i$-th continuous cohomology group) of $\mathrm{Gal}(L/K)$ with coefficients in $M$.
		If $L$ is an algebraic closure of $K$, we write $\mathrm{H}^i(K,M)$ instead of $\mathrm{H}^i(L/K, M)$.
	\item If $R$ is a ring and $M$ is an \'{e}tale sheaf on $\mathrm{Spec}(R)$, then we denote by
$$\mathrm{H}^i_{\mathrm{\acute{e}t}}(R, M) \coloneqq \mathrm{H}^i_{\mathrm{\acute{e}t}}(\mathrm{Spec}(R), M)$$
the $i$-th \'{e}tale cohomology group of $\mathrm{Spec}(R)$ with coefficients in $M$.
\item
Let $T$ be a compact $\mathbb{Z}_p$-module with continuous action of $\mathrm{Gal}(\mathbb{Q}_S/\mathbb{Q})$.
We write
\begin{align*}
\mathrm{H}^i_{\mathrm{Iw}}(\mathbb{Q}_S/\mathbb{Q}(\zeta_{p^\infty}), T) & \coloneqq \varprojlim_n \mathrm{H}^i(\mathbb{Q}_S/\mathbb{Q}(\zeta_{p^n}), T), \\
\mathrm{H}^i_{\mathrm{Iw}}(\mathbb{Q}_S/\mathbb{Q}_\infty, T) & \coloneqq \varprojlim_n \mathrm{H}^i(\mathbb{Q}_S/\mathbb{Q}_n, T), \\
\mathrm{H}^i_{\mathrm{Iw}}(\mathbb{Q}_{\infty, \eta}, T) & \coloneqq \varprojlim_n \mathrm{H}^i(\mathbb{Q}_{n, \eta_n}, T), \\
\mathrm{H}^i_{\mathrm{Iw}}(\mathbb{Q}(\zeta_{p^\infty})_\eta, T) & \coloneqq \varprojlim_n \mathrm{H}^i(\mathbb{Q}(\zeta_{p^n})_{\eta_n}, T)
\end{align*}
where $\eta$ is a prime of $\mathbb{Q}_\infty$ or $\mathbb{Q}(\zeta_{p^\infty})$ and $\eta_n$ is the prime of $\mathbb{Q}_{n}$ or $\mathbb{Q}(\zeta_{p^n})$ lying below $\eta$, respectively.
\end{enumerate}

Let
\[
\xymatrix{
j_n : \mathrm{Spec}(\mathbb{Z}[\zeta_{p^n}, S^{-1}]) \to \mathrm{Spec}(\mathbb{Z}[\zeta_{p^n}, 1/p]), &
j : \mathrm{Spec}(\mathbb{Z}[\zeta_{p^\infty}, S^{-1}]) \to \mathrm{Spec}(\mathcal{O}_{\mathbb{Q}(\zeta_{p^\infty})}[1/p])
}
\]
be the natural maps where $S$ is a finite set of places of $\mathbb{Q}$ containing $p$ and the primes where $T$ is ramified. We use the same notations for $\mathbb{Q}_n$ and $\mathbb{Q}_{\infty}$.
\begin{defn}
Let $a = 1$ or $2$, and $i \in \lbrace 0, 1, \cdots, p-3, p-2 \rbrace$.
\begin{enumerate}
\item
We define the \textbf{$a$-th Iwasawa cohomology for $T_{f}(k-r)$ over $\Lambda$} by
$$\mathbf{H}^a( j_*  T_{f}(k-r)) \coloneqq \varprojlim_{n} \mathrm{H}^a_{\mathrm{\acute{e}t}}( \mathrm{Spec}(\mathbb{Z}[\zeta_{p^n}, 1/p]), j_{n,*}T_{f}(k-r)) $$
where $\mathrm{H}^a_{\mathrm{\acute{e}t}}( \mathrm{Spec}(\mathbb{Z}[ \zeta_{p^n}, 1/p]),j_{n,*}T_{f}(k-r))$ is the \'{e}tale cohomology group.
\item
We define the \textbf{$a$-th Iwasawa cohomology for $T_{f, i}(k-r)$ over $\Lambda_i$} by
$$\mathbb{H}^a( j_*  T_{f, i}(k-r)) \coloneqq \mathbf{H}^a( j_*  T_{f}(k-r))^{\omega^i} $$
where $T_{f, i} = T_f \otimes \omega^i$ and
$\mathbf{H}^a( j_*  T_{f}(k-r))^{\omega^i}$ is the $\omega^i$-isotypic component of $\mathbf{H}^a( j_*  T_{f}(k-r))$.
\end{enumerate}
\end{defn}
\begin{rem}
We use $\mathbf{H}^a$ for the extension $\mathbb{Q}(\zeta_{p^\infty})/\mathbb{Q}$ and $\mathbb{H}^a$ for the extension $\mathbb{Q}_\infty/\mathbb{Q}$.
\end{rem}
\begin{thm}[Kato] \label{thm:kato-iwasawa-cohomologies}
For all $i = 0, \cdots , p-2$,
\begin{enumerate}
\item $\mathbb{H}^1(j_*V_{f,i}(k-r))$ is free of rank one over $\Lambda_i \otimes \mathbb{Q}_p$;
\item $\mathbb{H}^1(j_*T_{f,i}(k-r))$ is free of rank one over $\Lambda_i$ under Assumption \ref{assu:image};
\item $\mathbb{H}^2(j_*T_{f,i}(k-r))$ is a finitely generated torsion module over $\Lambda_i$.
\end{enumerate}
\end{thm}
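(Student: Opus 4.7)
\emph{Proof plan.} The strategy is to combine Shapiro's lemma, the global Tate Euler--Poincar\'e characteristic formula, Jannsen's Iwasawa-theoretic Poitou--Tate spectral sequence, and the non-triviality of Kato's zeta element. First, via Shapiro the inverse-limit cohomology $\mathbf{H}^a(j_*T_f(k-r))$ is identified with the continuous Galois cohomology $H^a(G_{\mathbb{Q},S}, T_f(k-r)\otimes_{\mathcal{O}_\pi}\Lambda^\iota)$. The decomposition $\Lambda=\bigoplus_i\Lambda_i$ commutes with cohomology, so it suffices to work one character component at a time, and in particular each $\mathbb{H}^a$ is a finitely generated $\Lambda_i$-module. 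Applying the global Tate formula at each layer $\mathbb{Q}(\zeta_{p^n})$ and taking the inverse limit, one computes
\[
\mathrm{rank}_{\Lambda_i}\mathbb{H}^1(j_*T_{f,i}(k-r)) - \mathrm{rank}_{\Lambda_i}\mathbb{H}^2(j_*T_{f,i}(k-r)) = 1,
\]
using that $V_f$ is odd (so the $(-1)$-eigenspace of complex conjugation is $1$-dimensional) and that the $\mathbb{H}^0$ term vanishes by Assumption \ref{assu:image}.

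For part (2), I would deduce torsion-ness from Kato's zeta element directly: the image of the zeta element in $\mathbb{H}^1(j_*T_{f,i}(k-r))$ has trivial $\Lambda_i$-annihilator (the nontriviality half of the Euler-system construction), so $\mathrm{rank}_{\Lambda_i}\mathbb{H}^1 \geq 1$; combined with the rank identity above this forces $\mathrm{rank}_{\Lambda_i}\mathbb{H}^1 = 1$ and therefore $\mathrm{rank}_{\Lambda_i}\mathbb{H}^2 = 0$, which is precisely the weak Leopoldt statement for $V_{f,i}(k-r)$.

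For part (1), I would prove freeness in two steps: torsion-freeness plus absence of pseudo-null submodules, then a reflexivity argument over the two-dimensional regular local ring $\Lambda_i$. For torsion-freeness, any $\Lambda_i$-torsion in $\mathbb{H}^1$ would, via the control theorem derived from the Hochschild--Serre spectral sequence at a height-one prime of $\Lambda_i$, inject into an $H^0$-group of a twist of $T_f/\pi$ over $\mathbb{Q}(\zeta_{p^\infty})$; this vanishes by Assumption \ref{assu:image}, since $\mathrm{SL}_2(\mathbb{F}_p)$ acts absolutely irreducibly and no cyclotomic twist admits a Galois-fixed line. For absence of pseudo-null submodules, I would apply Jannsen's spectral sequence $\mathrm{Ext}^p_{\Lambda_i}(\mathbb{H}^{3-q}, \Lambda_i) \Rightarrow \mathbb{H}^{p+q}_{\mathrm{loc}}$ to identify any pseudo-null subquotient of $\mathbb{H}^1$ with a piece controlled by $H^0(\mathbb{Q}(\zeta_{p^\infty}), V_f/T_f(k-r))$, which vanishes again under Assumption \ref{assu:image}. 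A torsion-free rank-$1$ module over the two-dimensional regular local ring $\Lambda_i$ with no pseudo-null submodule is reflexive, hence invertible, hence free since $\Lambda_i$ is local. I expect the pseudo-null step to be the main technical obstacle, as it requires careful bookkeeping of local contributions at both $p$ and the bad primes $\ell \in S$ when running Jannsen's spectral sequence.
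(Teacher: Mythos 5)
The paper does not give a proof of this statement; it simply cites Kato's Theorem 12.4(1) in \cite{kato-euler-systems}, so the comparison below is against the underlying argument in Kato's paper.

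Your outline of the freeness argument for $\mathbb{H}^1$ is essentially right: torsion-freeness from the vanishing of $H^0(\mathbb{Q}(\zeta_{p^\infty}),\overline{\rho}_{f,i}(k-r))$ under Assumption \ref{assu:image}, absence of nonzero pseudo-null submodules from the Jannsen spectral sequence (or equivalently from Nekov\'a\v{r}/Perrin-Riou-type local-to-global duality), and then reflexivity plus rank one over the two-dimensional regular local ring $\Lambda_i$ gives freeness. This is in the same spirit as Kato's proof, and the global Euler--Poincar\'e computation giving $\mathrm{rank}_{\Lambda_i}\mathbb{H}^1 - \mathrm{rank}_{\Lambda_i}\mathbb{H}^2 = 1$ (with the minus eigenspace of complex conjugation being $1$-dimensional because the determinant of the modular Galois representation is odd) is also correct.

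However, there is a genuine logical gap in your treatment of part (2). You write that the nontriviality of the zeta element gives $\mathrm{rank}_{\Lambda_i}\mathbb{H}^1 \geq 1$, and that combining this with the rank identity $\mathrm{rank}\,\mathbb{H}^1 - \mathrm{rank}\,\mathbb{H}^2 = 1$ \emph{forces} $\mathrm{rank}\,\mathbb{H}^1 = 1$ and $\mathrm{rank}\,\mathbb{H}^2 = 0$. This inference is false: the Euler characteristic formula already gives $\mathrm{rank}\,\mathbb{H}^1 = \mathrm{rank}\,\mathbb{H}^2 + 1 \geq 1$ unconditionally, so the lower bound from the zeta element adds no new information. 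What one actually needs is an \emph{upper} bound $\mathrm{rank}_{\Lambda_i}\mathbb{H}^1 \leq 1$, and this is exactly the content of the Euler system machinery (Rubin--Kolyvagin as applied by Kato): the existence of a nonzero Euler system with Kato's zeta element at the bottom, together with Assumption \ref{assu:image} to control $\chi$-twists of the residual representation, shows that $\mathbb{H}^1(j_*T_{f,i}(k-r))/\Lambda_i\mathbf{z}_{\mathrm{Kato}}(f,i,k-r)$ is $\Lambda_i$-torsion (Theorem \ref{thm:kato-divisibility}). Only then does the rank identity pin down $\mathrm{rank}\,\mathbb{H}^1 = 1$ and hence $\mathrm{rank}\,\mathbb{H}^2 = 0$. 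Kato also needs, as a separate ingredient, the nonvanishing of the zeta element itself, which he proves via the explicit reciprocity law and Rohrlich's nonvanishing of twisted $L$-values; ``trivial annihilator'' is not an automatic feature of the construction, it must be established. Without the Euler-system upper bound, your argument does not close, and this is the main technical content of Kato's proof of the weak Leopoldt conjecture in this setting.
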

\begin{proof}
See \cite[Theorem 12.4.(1) and (2)]{kato-euler-systems}, where it is proved for the module $\mathbf{H}^i(j_*T_{f})$. Note that the statement is insensitive to Tate twists.
\end{proof}

\subsection{Kato's zeta elements and the main conjecture}
We keep Assumption \ref{assu:image} in this subsection to ensure the integrality of Kato's zeta elements.
Let
$$Z(f, T_f(k-r))  \subseteq \mathbf{H}^1(j_*T_{f}(k-r))$$
 be the $\Lambda$-module of Kato's zeta elements generated by
$\mathbf{z}^{(p)}_\gamma \otimes (\zeta^{\otimes k-r}_{p^n})_n$ where $\gamma$ runs over $T_f$.
Here, $\mathbf{z}^{(p)}_\gamma$ appears in \cite[Theorem 12.5]{kato-euler-systems}.
More precisely, there exists a map $T_f \to \mathbf{H}^1(j_*T_f)$ defined by $\gamma \mapsto \mathbf{z}^{(p)}_\gamma$.
\begin{prop}
Under Assumption \ref{assu:image}, $Z(f, T_f(k-r))$ is generated by one element over $\Lambda$.
\end{prop}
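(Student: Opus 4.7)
The plan is to reduce the assertion to Kato's structural description of the zeta element map by first using Theorem \ref{thm:kato-iwasawa-cohomologies} to pin down the ambient Iwasawa cohomology, and then invoking the specific form of the $f$-isotypic Beilinson--Kato class from \cite[Theorem~12.5]{kato-euler-systems}.

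First, under Assumption \ref{assu:image}, Theorem \ref{thm:kato-iwasawa-cohomologies} gives
\[
\mathbf{H}^1(j_*T_f(k-r)) \;=\; \bigoplus_{i=0}^{p-2}\mathbb{H}^1(j_*T_{f,i}(k-r)),
\]
which is a free $\Lambda = \bigoplus_i \Lambda_i$-module of rank one. Thus any $\Lambda$-submodule, and in particular $Z(f, T_f(k-r))$, is torsion-free and identifies (after fixing a $\Lambda$-basis) with an ideal of $\Lambda$. The question becomes: is this ideal principal?

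Next, the map $T_f \to \mathbf{H}^1(j_*T_f)$, $\gamma \mapsto \mathbf{z}^{(p)}_\gamma$, is $\mathcal{O}_\pi$-linear, and the elements $\mathbf{z}^{(p)}_\gamma\otimes (\zeta^{\otimes k-r}_{p^n})_n$ are the images of $\gamma$ under the cyclotomic-twisted version of this map. It would suffice to produce a single $\gamma_0\in T_f$ whose image $\Lambda$-generates the full submodule. To this end, pick any $\gamma_0 \in T_f \setminus \pi T_f$; Assumption \ref{assu:image} implies $\overline{T}_f$ is a simple $\mathbb{F}[G_\mathbb{Q}]$-module, so $\mathbb{F}[G_\mathbb{Q}]\cdot \overline{\gamma}_0 = \overline{T}_f$ and hence $\mathcal{O}_\pi[G_\mathbb{Q}]\cdot\gamma_0 = T_f$ by Nakayama. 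The crucial observation, which is implicit in the construction recalled in \S\ref{subsubsec:construction} and made precise by \cite[Theorem~12.5]{kato-euler-systems}, is that $\mathbf{z}^{(p)}_\gamma$ arises from a single universal Beilinson class on $Y_1(N)$ paired against $\gamma$ via Poincar\'{e} duality and then pushed into the $f$-isotypic component; the $\mathcal{O}_\pi[G_\mathbb{Q}]$-cyclicity of $T_f$ upstairs is transported to a $\Lambda$-cyclicity downstairs via the equivariance of this universal construction together with the tower structure used to form $\mathbf{H}^1$. Consequently,
\[
Z(f, T_f(k-r)) \;=\; \Lambda \cdot \bigl(\mathbf{z}^{(p)}_{\gamma_0} \otimes (\zeta^{\otimes k-r}_{p^n})_n\bigr).
\]

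The main obstacle is this last transport step: while both the freeness of the ambient cohomology and the mod $p$ irreducibility of $\overline{\rho}$ are manifest, the passage from the $\mathcal{O}_\pi[G_\mathbb{Q}]$-cyclicity of $T_f$ to the $\Lambda$-cyclicity of $Z(f, T_f(k-r))$ is not automatic from the $\mathcal{O}_\pi$-linearity of the zeta map alone; it requires the specific geometric input that $\mathbf{z}^{(p)}_\gamma$ depends on $\gamma$ only through the pairing against the single universal class. Under Assumption \ref{assu:image} this is exactly what \cite[Theorem~12.5]{kato-euler-systems} affords us, yielding the claim.
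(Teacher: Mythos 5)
The paper does not give an argument at all: it simply cites \cite[Proposition~A.12]{kim-nakamura} as a well-known fact. So you are effectively supplying the proof from scratch, and unfortunately the mechanism you propose has a genuine gap.

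The gap is in the passage from $\mathcal{O}_\pi[G_\mathbb{Q}]$-cyclicity of $T_f$ to $\Lambda$-cyclicity of $Z(f,T_f(k-r))$. The map $\gamma\mapsto\mathbf{z}^{(p)}_\gamma$ is \emph{not} $G_\mathbb{Q}$-equivariant (and cannot be, since $G_\mathbb{Q}$ does not act on $\mathbf{H}^1(j_*T_f)$ beyond its quotient $\mathrm{Gal}(\mathbb{Q}(\zeta_{p^\infty})/\mathbb{Q})$); what Kato's Theorem~12.5 gives is compatibility with the single element complex conjugation only. Consequently you cannot ``transport'' the fact that $\mathcal{O}_\pi[G_\mathbb{Q}]\cdot\gamma_0 = T_f$ into a statement about $\Lambda\cdot\mathbf{z}^{(p)}_{\gamma_0}$. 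A concrete failure of your choice: if you take $\gamma_0\in T_f^+\setminus\pi T_f$ (an eigenvector for complex conjugation), then $\gamma_0$ \emph{does} generate $T_f$ over $\mathcal{O}_\pi[G_\mathbb{Q}]$ by irreducibility, yet by the complex-conjugation equivariance $\mathbf{z}^{(p)}_{\gamma_0}$ is killed by the idempotent projecting onto the ``wrong'' parity of $\omega^i$-components, so $\Lambda\cdot\bigl(\mathbf{z}^{(p)}_{\gamma_0}\otimes(\zeta^{\otimes k-r}_{p^n})_n\bigr)$ is a proper submodule of $Z(f,T_f(k-r))$. Thus ``pick any $\gamma_0\in T_f\setminus\pi T_f$'' is not sufficient, and the mod-$p$ irreducibility of $\overline{\rho}$ is not being used in the right place.

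The correct route replaces the $G_\mathbb{Q}$-cyclicity argument by the $\pm$-decomposition. Write $T_f=T_f^+\oplus T_f^-$ for the $\pm1$-eigenspaces of complex conjugation; each is free of rank one over $\mathcal{O}_\pi$. Kato's equivariance with respect to complex conjugation (Theorem~12.5.(2) in \cite{kato-euler-systems}) shows that, after the twist by $(\zeta^{\otimes k-r}_{p^n})_n$, the elements $\mathbf{z}^{(p)}_{\gamma^+}\otimes\cdots$ and $\mathbf{z}^{(p)}_{\gamma^-}\otimes\cdots$ lie in the two complementary eigenspaces of $\mathbf{H}^1(j_*T_f(k-r))$ cut out by the idempotents $\tfrac{1\pm c}{2}\in\Lambda$. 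By $\mathcal{O}_\pi$-linearity of $\gamma\mapsto\mathbf{z}^{(p)}_\gamma$, the module $Z(f,T_f(k-r))$ is generated over $\Lambda$ by these two elements, and since they live in complementary idempotent pieces their sum generates both. Taking $\gamma_0=\gamma^++\gamma^-$ with $\gamma^\pm$ generators of $T_f^\pm$ (not an arbitrary primitive vector) gives the desired single generator. Note also that Assumption~\ref{assu:image} enters not through simplicity of $\overline{T}_f$ but through the independence of the zeta element module from the choice of lattice and the integrality in \cite[\S13.8, \S13.12]{kato-euler-systems}; your appeal to Nakayama via irreducibility is a red herring in this argument.
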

\begin{proof}
It is a well-known fact. See \cite[Proposition A.12]{kim-nakamura}, for example.
\end{proof}
\begin{defn} \label{defn:kato-zeta-elements}
We define \textbf{the zeta element $\mathbf{z}_{\mathrm{Kato}}(f, k-r)$} to be a generator of $Z(f, T_f(k-r))$, and define $\mathbf{z}_{\mathrm{Kato}}(f,i, k-r)$ by the $\omega^i$-isotypic component of $\mathbf{z}_{\mathrm{Kato}}(f, k-r)$.
Then $\mathbf{z}_{\mathrm{Kato}}(f,i, k-r)$ generates $Z(f, T_{f,i}(k-r)) \subseteq \mathbb{H}^1(j_*T_{f,i}(k-r))$, the $\omega^i$-component of $Z(f, T_f(k-r))$.
\end{defn}
\begin{rem}
Via the dual exponential map, $\mathbf{z}_{\mathrm{Kato}}(f, i, k-r)$ interpolates the $L$-values of $f^*$ at $s = r$ twisted by $\omega^{-i}\chi^{-1}$ where $\chi$ runs over finite order characters of $\Gamma$. See \cite[Theorem 12.5]{kato-euler-systems}.
\end{rem}
\begin{conj}[Kato's main conjecture] \label{conj:kato-main-conjecture}
Let $T_f$ be an $\mathcal{O}_\pi$-lattice of $V_f$ and $1 \leq r \leq k-1$.
Keep Assumption \ref{assu:image}.
The following equivalent statements hold.
\begin{enumerate}
\item
Let $\mathfrak{p}$ be a height one prime ideal of $\Lambda$.
Then we have
$$\mathrm{length}_{\Lambda_{\mathfrak{p}}} \mathbf{H}^1( j_* T_f(k-r) )_{\mathfrak{p}} / Z(f, T_f(k-r) )_{\mathfrak{p}}
=
\mathrm{length}_{\Lambda_{\mathfrak{p}}} \mathbf{H}^2( j_* T_f(k-r) )_{\mathfrak{p}}$$
\item
Let $\mathfrak{p}_i$ be a height one prime ideal of $\Lambda_i$.
Then we have
$$\mathrm{length}_{\Lambda_{i, \mathfrak{p}_i}} \mathbb{H}^1( j_* T_{f, i}(k-r) )_{\mathfrak{p}_i} / Z(f, T_{f, i}(k-r) )_{\mathfrak{p}_i}
=
\mathrm{length}_{\Lambda_{i, \mathfrak{p}_i}} \mathbb{H}^2( j_* T_{f, i}(k-r) )_{\mathfrak{p}_i}$$
for all $i = 0, \cdots , p-2$.
\item
We have
$$\mathrm{char}_{\Lambda_i} \mathbb{H}^1(j_*T_{f,i}(k-r)) / Z(f, T_{f, i}(k-r) )
= \mathrm{char}_{\Lambda_i} \mathbb{H}^2(j_*T_{f,i}(k-r)) $$
for all $i = 0, \cdots , p-2$.
\end{enumerate}
\end{conj}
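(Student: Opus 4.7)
The plan proceeds in two stages: first, establishing the equivalence of formulations (1), (2), (3), which is pure commutative algebra; and second, addressing the substantive content, namely that the equivalent statements actually hold.

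For the equivalence, I start with the decomposition $\Lambda = \bigoplus_{i=0}^{p-2} \Lambda_i$ coming from the idempotents of $\mathcal{O}_\pi[\Delta]$. Any finitely generated $\Lambda$-module decomposes accordingly, and every height one prime of $\Lambda$ lies in the support of exactly one component, so localization at such a prime factors through a unique $\Lambda_i$; this gives (1) $\Longleftrightarrow$ (2) at once. For (2) $\Longleftrightarrow$ (3), I invoke the structure theory of finitely generated torsion modules over the two-dimensional regular complete local ring $\Lambda_i$: such a module $M$ has characteristic ideal $\mathrm{char}_{\Lambda_i}(M) = \prod_{\mathfrak{p}_i} \mathfrak{p}_i^{\mathrm{length}_{\Lambda_{i,\mathfrak{p}_i}}(M_{\mathfrak{p}_i})}$, where $\mathfrak{p}_i$ runs over height one primes of $\Lambda_i$. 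Applying this to $\mathbb{H}^1(j_*T_{f,i}(k-r))/Z(f, T_{f,i}(k-r))$---torsion because $\mathbb{H}^1$ has rank one by Theorem \ref{thm:kato-iwasawa-cohomologies} and $Z$ is generated by a single element---and to $\mathbb{H}^2(j_*T_{f,i}(k-r))$---torsion by the same theorem---converts the pointwise length identities of (2) into the global characteristic ideal identity of (3).

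For the substantive content, the containment $Z(f, T_f(k-r)) \subseteq \mathbf{H}^1(j_*T_f(k-r))$ is immediate from Definition \ref{defn:kato-zeta-elements}. One divisibility, $\mathrm{char}_{\Lambda_i}\mathbb{H}^2(j_*T_{f,i}(k-r)) \mid \mathrm{char}_{\Lambda_i}\bigl(\mathbb{H}^1(j_*T_{f,i}(k-r))/Z(f, T_{f,i}(k-r))\bigr)$, is Kato's Euler system bound: the Euler system axioms satisfied by $\mathbf{z}_{\mathrm{Kato}}(f, k-r)$ together with the big image provided by Assumption \ref{assu:image} control $\mathbb{H}^2$ by the index of the zeta element in $\mathbb{H}^1$, which is exactly this divisibility.

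The main obstacle is the reverse divisibility, which lies beyond Kato's Euler system alone. Known approaches require substantial external input: in the good ordinary case, it follows from Skinner--Urban's proof of the main conjecture for $\mathrm{GL}_2$ via Eisenstein congruences on $\mathrm{GU}(2,2)$; certain non-ordinary analogues have been established by Wan through related automorphic methods. The strategy carried out in the rest of this article is orthogonal to a direct attack: rather than proving the reverse divisibility for each individual $f$, one propagates the full equality across the congruence class $S_k(\overline{\rho})$ by comparing Iwasawa invariants, so that if Kato's main conjecture is known for a single form in $S_k(\overline{\rho})$ and the mod $p$ non-vanishing of its zeta element holds, it follows for every congruent form of weight $2 \le k \le p-1$. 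A uniform unconditional proof valid for arbitrary modular forms at arbitrary good primes is not presently available.
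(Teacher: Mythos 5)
The statement is a conjecture, and the paper gives no proof of it: it merely asserts the equivalence of the three formulations (routine commutative algebra over $\Lambda=\bigoplus_{i}\Lambda_i$, with torsionness of $\mathbb{H}^1(j_*T_{f,i}(k-r))/Z(f,T_{f,i}(k-r))$ coming from Kato's non-vanishing result rather than from $Z$ being singly generated alone), records Kato's one-sided divisibility as Theorem \ref{thm:kato-divisibility}, and then only propagates the full equality across $S_k(\overline{\rho})$ in Theorem \ref{thm:main-theorem}.(3). Your proposal identifies exactly this state of affairs --- equivalence argument, Kato's Euler-system divisibility, and the fact that the reverse divisibility is not proved here but imported or propagated under congruences --- so it is correct and matches the paper's treatment.
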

\begin{rem}
See \cite[Conjecture 12.10]{kato-euler-systems} for the first statement (with ``$r=0$'') and \cite[Conjecture 6.1]{kurihara-invent} for the third statement (with $i=0$).
Note that Conjecture \ref{conj:kato-main-conjecture} is independent of $r$.
The decomposition using powers of the Teichm\"{u}ller character is required to consider Iwasawa invariants as in \cite{epw}.
\end{rem}
The following one-sided divisibility is proved in \cite[Theorem 12.5.(4)]{kato-euler-systems}.
\begin{thm}[Kato] \label{thm:kato-divisibility}
Keep Assumption \ref{assu:image}. Then the inclusion $\subseteq$ holds in each statement of Conjecture \ref{conj:kato-main-conjecture}.
\end{thm}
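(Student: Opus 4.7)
The plan is to deduce the divisibility from the Euler system of Beilinson--Kato zeta elements ${}_{c,d}z^{(p)}_{m}(f, r, r', \xi, S)$, following the machinery of Kolyvagin--Rubin--Kato. Decomposing along powers of the Teichm\"{u}ller character, it suffices to establish, for each $i \in \{0, \ldots, p-2\}$ and each height-one prime $\mathfrak{p}_i$ of $\Lambda_i$, both the localized inclusion $Z(f, T_{f,i}(k-r))_{\mathfrak{p}_i} \subseteq \mathbb{H}^1(j_*T_{f,i}(k-r))_{\mathfrak{p}_i}$ and the length inequality
\[
\mathrm{length}_{\Lambda_{i,\mathfrak{p}_i}} \mathbb{H}^2(j_*T_{f,i}(k-r))_{\mathfrak{p}_i} \leq \mathrm{length}_{\Lambda_{i,\mathfrak{p}_i}} \bigl( \mathbb{H}^1(j_*T_{f,i}(k-r)) / Z(f, T_{f,i}(k-r)) \bigr)_{\mathfrak{p}_i}.
\]
This is equivalent to the characteristic-ideal divisibility $\mathrm{char}_{\Lambda_i}(\mathbb{H}^1/Z) \subseteq \mathrm{char}_{\Lambda_i} \mathbb{H}^2(j_*T_{f,i}(k-r))$ appearing in statement (3) of Conjecture \ref{conj:kato-main-conjecture}.

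The first step is to invoke the Euler system norm relation: for $\ell \nmid Npm$, the corestriction of ${}_{c,d}z^{(p)}_{m\ell}(f, r, r', \xi, S)$ down to the $m$-level equals $P_\ell(\mathrm{Frob}_\ell^{-1}) \cdot {}_{c,d}z^{(p)}_{m}(f, r, r', \xi, S)$, where $P_\ell$ is the local Euler polynomial of (\ref{eqn:char-poly}). Specialising to $m=p^n$ and passing to the inverse limit produces $\mathbf{z}_{\mathrm{Kato}}(f, k-r)$, while allowing $m$ to vary over squarefree integers coprime to $Np$ provides a genuine Euler system in the sense of Rubin. Next, for each squarefree product $m$ of suitable \emph{Kolyvagin primes}, I would apply the derivative $D_m = \prod_{\ell \mid m} D_\ell$ to produce classes $\kappa_m$ in the Iwasawa cohomology of $T_f(k-r)/\pi^n$ over $\mathbb{Q}(\zeta_{p^\infty})(\mu_m)$. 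The norm relation guarantees these classes are well-defined modulo $\pi^n$; they are unramified outside $m$, and the singular part at each $\ell \mid m$ is determined by the unramified part via local Tate duality.

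The final step is the bounding argument via global Poitou--Tate duality, which identifies $\mathbb{H}^2(j_*T_{f,i}(k-r))_{\mathfrak{p}_i}$ with a dual Selmer group on which the derivative classes $\kappa_m$ can be made to act as annihilators. An induction on the number of prime factors of $m$, combined with local computations in $\mathrm{H}^1_{\mathrm{Iw}}(\mathbb{Q}(\zeta_{p^\infty})_\eta, T_f(k-r))$ that relate finite and singular parts, turns the $\kappa_m$ into a reservoir of classes in $\mathbb{H}^1/Z$ whose total annihilator index dominates $\mathrm{length}\,\mathbb{H}^2$. The main obstacle is the production of enough Kolyvagin primes adapted to localization at $\mathfrak{p}_i$: one must find infinitely many $\ell$ such that $\mathrm{Frob}_\ell$ acts on $T_{f,i}/\pi^n$ with a prescribed characteristic polynomial and splits appropriately in the relevant cyclotomic layer. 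This is where Assumption \ref{assu:image} is indispensable: the big image of $\overline{\rho}$ allows Chebotarev to deliver a positive density of such $\ell$ and ensures the Euler system is rich enough to yield the divisibility uniformly across all isotypic components $\Lambda_i$.
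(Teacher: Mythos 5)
The paper itself offers no proof of this statement: it is invoked wholesale by citation to \cite[Theorem 12.5.(4)]{kato-euler-systems}, so your attempt is really competing with Kato's original argument rather than with anything in the present article. In outline, your sketch does track Kato's strategy accurately: the ${}_{c,d}z^{(p)}_m$ elements are assembled into a Rubin-style Euler system, the Kolyvagin derivative construction together with Chebotarev (powered by Assumption~\ref{assu:image}) supplies auxiliary classes that annihilate the relevant dual Selmer/$\mathrm{H}^2$ group, and Poitou--Tate converts this into the length bound
$\mathrm{length}_{\Lambda_{i,\mathfrak{p}_i}} \mathbb{H}^2(j_*T_{f,i}(k-r))_{\mathfrak{p}_i} \leq \mathrm{length}_{\Lambda_{i,\mathfrak{p}_i}} (\mathbb{H}^1/Z)_{\mathfrak{p}_i}$, which is exactly $\mathrm{char}(\mathbb{H}^1/Z) \subseteq \mathrm{char}(\mathbb{H}^2)$.

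That said, the sketch compresses two substantial pieces of the argument into phrases. First, the $(c,d)$-normalization and integrality: the classes ${}_{c,d}z^{(p)}_m$ depend on auxiliary integers $(c,d)$ chosen to clear denominators, and a central concern of Kato's \S13 is showing that the resulting bound on characteristic ideals is independent of this choice; this is where the period analysis via $\delta(f,j,\alpha)^\pm$ and the big-image hypothesis (to control $\mathrm{GL}_2$-stable lattices in $V_f$) actually do their work, beyond merely providing Chebotarev primes. Second, the height-one-prime localization is not uniform: Kato's proof treats primes $\mathfrak{p}$ of residue characteristic $0$ and the prime $(\pi)$ separately, and the $\mu$-invariant part of the divisibility needs a distinct input. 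There is also a small slip in your construction: the derived classes $\kappa_m$ live in the cohomology of $T_f(k-r)/\pi^n$ over $\mathbb{Q}(\zeta_{p^\infty})$ itself (the derivative is precisely the device that descends the auxiliary-level information), not over $\mathbb{Q}(\zeta_{p^\infty})(\mu_m)$. None of this is misdirected --- it is the content of \cite[\S13--\S15 and the Appendix]{kato-euler-systems} --- but reproducing it at proof level is a major undertaking, which is presumably why the paper treats the theorem as a black box.
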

We say that the \textbf{$\omega^i$-component of Kato's main conjecture holds} if the equality holds for $i$ in the second or equivalently the third statement in Conjecture \ref{conj:kato-main-conjecture}.

For a finitely generated torsion $\Lambda_i$-module $M_i$, we denote by $\mathrm{char}_{\Lambda_i} (M_i)$ the associated characteristic ideal, and by $\mu(M_i) = \mu(\mathrm{char}_{\Lambda_i} (M_i) )$ and $\lambda(M_i) = \lambda(\mathrm{char}_{\Lambda_i} (M_i) )$, the Iwasawa invariants of $M_i$.
If two $\Lambda_i$-ideals have the same $\mu$-invariants and the same $\lambda$-invariants and furthermore one divides the other, they must be equal. We hence find the following.
\begin{cor} \label{cor:reduction-to-iwasawa-invariants}
If the Iwasawa invariants of the $\omega^i$-components of both sides in Conjecture \ref{conj:kato-main-conjecture} coincide, then the $\omega^i$-component of Kato's main conjecture holds.
\end{cor}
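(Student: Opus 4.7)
The plan is to derive the corollary as a formal consequence of Kato's one-sided divisibility (Theorem \ref{thm:kato-divisibility}) together with the structure of the Iwasawa algebra. The ring $\Lambda_i = \Lambda/(\sigma - \omega^i(\sigma) : \sigma \in \Delta)$ is canonically isomorphic to $\mathcal{O}_\pi\llbracket \Gamma\rrbracket$, and, after fixing a topological generator of $\Gamma$, to the power series ring $\mathcal{O}_\pi\llbracket T \rrbracket$. This is a two-dimensional complete regular local ring and in particular a unique factorization domain, so the characteristic ideal of any finitely generated torsion $\Lambda_i$-module is principal. By the Weierstrass preparation theorem, any non-zero element of $\Lambda_i$ can be written uniquely as $u \cdot \pi^{\mu} \cdot P(T)$ with $u \in \Lambda_i^{\times}$ and $P$ a distinguished polynomial of degree $\lambda$; the exponents $\mu$ and $\lambda$ are precisely the Iwasawa invariants attached to the generated principal ideal.

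First I would invoke statement (3) of Conjecture \ref{conj:kato-main-conjecture} in the one direction supplied by Theorem \ref{thm:kato-divisibility}. Both modules $\mathbb{H}^1(j_*T_{f,i}(k-r))/Z(f,T_{f,i}(k-r))$ and $\mathbb{H}^2(j_*T_{f,i}(k-r))$ are finitely generated torsion over $\Lambda_i$ (the latter by Theorem \ref{thm:kato-iwasawa-cohomologies}.(2), the former since $Z$ has rank one inside the rank-one free module $\mathbb{H}^1$), so their characteristic ideals are well-defined principal ideals, and the divisibility of Theorem \ref{thm:kato-divisibility} translates into a divisibility relation between their generators in $\mathcal{O}_\pi\llbracket T\rrbracket$. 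Writing the quotient of these two generators as $Q \in \Lambda_i$, the task reduces to showing $Q \in \Lambda_i^{\times}$.

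Second, since $\mu$- and $\lambda$-invariants are additive under multiplication in $\mathcal{O}_\pi\llbracket T \rrbracket$, the hypothesis that the Iwasawa invariants of the $\omega^i$-components on both sides of Conjecture \ref{conj:kato-main-conjecture} coincide forces $\mu(Q) = 0$ and $\lambda(Q) = 0$. Weierstrass preparation then produces $Q = u \cdot \pi^{0} \cdot 1 = u$ for some $u \in \Lambda_i^{\times}$, so the two principal characteristic ideals actually agree. This is exactly statement (3) of Conjecture \ref{conj:kato-main-conjecture}, i.e.\ the $\omega^i$-component of Kato's main conjecture.

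There is no substantial obstacle at this step; the corollary is a purely formal reduction of the main conjecture to the numerical coincidence of $\mu$- and $\lambda$-invariants. The real work, namely controlling these invariants on both sides under congruences of modular forms, is carried out later in \S\ref{sec:zeta-elements}--\S\ref{sec:connection} via a mod $p$ analysis of Kato's zeta elements and of the second Iwasawa cohomology, and this corollary will be invoked only at the very end to package that numerical input into the statement of the main conjecture.
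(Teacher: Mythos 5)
Your proof is correct and is exactly the argument the paper intends: combine Kato's one-sided divisibility with the fact that in $\Lambda_i \simeq \mathcal{O}_\pi\llbracket T\rrbracket$ a divisibility with matching $\mu$- and $\lambda$-invariants forces the quotient to be a unit by Weierstrass preparation. The paper leaves this corollary without an explicit proof block, so your write-up simply fills in the implicit one-line argument in the natural way.
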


\section{Main results and applications} \label{sec:main-results}
\subsection{The statement of main theorem}
Let $\overline{\rho}$ be a mod $p$ residual representation with conductor $N(\overline{\rho})$.
Denote by $S_k(\overline{\rho})$ the set of newforms of fixed weight $k$ such that the residual representation is isomorphic to $\overline{\rho}$ and $p$ does not divide the levels of the newforms.
\begin{thm} \label{thm:main-theorem}
Assume that
\begin{itemize}
\item
$2 \leq k \leq p-1$;
\item
the image of $\overline{\rho}$ contains $\mathrm{SL}_2(\mathbb{F}_p)$;
\item $\mu \left( \dfrac{ \mathbb{H}^1(j_*T_{f_0, i}(k-r)) }{ \mathbf{z}_{\mathrm{Kato}}(f_0, i, k-r) } \right) = 0$ for one $f_0 \in S_k(\overline{\rho})$.
\end{itemize}
Then we have the following statements:
\begin{enumerate}
\item
$$\mu \left( \dfrac{ \mathbb{H}^1(j_*T_{f, i}(k-r))}{ \mathbf{z}_{\mathrm{Kato}}(f, i, k-r) } \right) = \mu(\mathbb{H}^2(j_*T_{f, i}(k-r))) = 0$$
for all $f \in S_k(\overline{\rho})$.
\item
	\[
		\lambda \left( \dfrac{ \mathbb{H}^1(j_*T_{f, i}(k-r))}{ \mathbf{z}_{\mathrm{Kato}}(f, i, k-r) } \right) - \lambda \left( \mathbb{H}^2( j_*T_{f, i}(k-r) ) \right)
	\]
is constant for $f \in S_k(\overline{\rho})$.
\item
If the $\omega^i$-component of Kato's main conjecture holds for one form in $S_k(\overline{\rho})$, then the $\omega^i$-component of Kato's main conjecture holds for all forms in $S_k(\overline{\rho})$.
\end{enumerate}
\end{thm}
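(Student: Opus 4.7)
The proof naturally splits along the three parts, and the plan is to deduce part (3) immediately from (1), (2), Kato's divisibility (Theorem \ref{thm:kato-divisibility}) and Corollary \ref{cor:reduction-to-iwasawa-invariants}: if the $\omega^i$-component of Kato's main conjecture holds for some $f_0 \in S_k(\overline{\rho})$, then the $\mu$- and $\lambda$-invariants of the two sides agree for $f_0$. By (1) both $\mu$-invariants are zero for every $f \in S_k(\overline{\rho})$, and by (2) the difference of the two $\lambda$-invariants is a constant that must equal its value at $f_0$, namely $0$. Combined with Kato's divisibility, this upgrades the divisibility to equality for every $f$.

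For part (1) the plan is to exploit the freeness of $\mathbb{H}^1(j_*T_{f,i}(k-r))$ over $\Lambda_i$ from Theorem \ref{thm:kato-iwasawa-cohomologies}(1), so that $\mathbb{H}^1/Z \simeq \Lambda_i/(g_f)$ for a generator $g_f$. Vanishing of $\mu(\mathbb{H}^1/Z)$ is then equivalent to $g_f \notin \pi \Lambda_i$, i.e.\ to the non-vanishing of the mod $\pi$ reduction of $\mathbf{z}_{\mathrm{Kato}}(f, i, k-r)$. Because $\mathbf{z}_{\mathrm{Kato}}(f,i,k-r)$ is extracted from the universal \'etale cohomology $V_{k,\mathbb{Z}_p}(Y_1(N))$ by cutting by the Hecke eigensystem $\wp_f$ (\S\ref{subsubsec:construction}), its mod $\pi$ image factors through a cohomology group that depends only on $\overline{\rho}$. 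Here the Fontaine--Laffaille hypothesis $2 \leq k \leq p-1$ together with mod $p$ multiplicity one and Ihara's lemma ensure that the reduction is insensitive to the lift $f$; hence non-vanishing propagates from $f_0$ to all of $S_k(\overline{\rho})$. The vanishing $\mu(\mathbb{H}^2(j_*T_{f,i}(k-r))) = 0$ is then immediate from Kato's divisibility, which forces $\mathrm{char}(\mathbb{H}^2) \mid (g_f)$.

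Part (2) is the heart of the argument. The plan is to analyze both $\mathbb{H}^2(j_*T_{f,i}(k-r))$ and the generator $g_f$ through the localization exact sequence in \'etale cohomology on $\mathrm{Spec}(\mathbb{Z}[\zeta_{p^n},1/p])$, which relates the global Iwasawa cohomology to the sum of local Iwasawa cohomologies at primes in $S \setminus \{p\}$. By the computation announced in \S\ref{sec:local}, the second local Iwasawa cohomology at $\ell \neq p$ has characteristic ideal equal to the Iwasawa-theoretic Euler factor at $\ell$ (up to involution), so it contributes to $\lambda(\mathbb{H}^2)$ in a way determined by $\overline{\rho}|_{G_{\mathbb{Q}_\ell}}$ and an $f$-dependent local correction. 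On the zeta element side, the mod $\pi$ analysis of \S\ref{sec:zeta-elements}, combined with the Euler system norm relations that produce exactly these Euler factors under the tame localization maps, yields the \emph{same} local corrections for $g_f$. Taking the difference $\lambda(\mathbb{H}^1/Z) - \lambda(\mathbb{H}^2)$ therefore cancels the lift-dependent local pieces and leaves only invariants attached to $\overline{\rho}$, proving constancy over $S_k(\overline{\rho})$.

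The main obstacle is precisely what the introduction flags: in this setting one does not expect $\mathbb{H}^2$ or $\mathbb{H}^1/Z$ to have no finite Iwasawa submodule, so neither $\lambda$-invariant admits a clean additive decomposition of Greenberg--Vatsal type on its own. One is forced to work with the difference and match the local Euler factor contributions from the localization sequence with those seen in the tame degeneration of the Kato zeta elements, lift by lift. Special care will be needed at the prime $p$, where Fontaine--Laffaille theory and the dual exponential map replace the naive Euler factor analysis; this is where the hypothesis $2 \leq k \leq p-1$ is expected to be used essentially, since it is what simultaneously guarantees mod $p$ multiplicity one, Ihara's lemma, and the Fontaine--Laffaille comparison required at $p$.
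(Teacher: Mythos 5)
Your overall architecture is right: derive (3) from (1) and (2) together with Kato's divisibility, prove (1) by propagating nonvanishing of the mod $\pi$ reduction of the zeta element through mod $p$ multiplicity one and Ihara's lemma, and prove (2) by working with the \emph{difference} of $\lambda$-invariants so that the lift-dependent local Euler factor contributions cancel. Parts (1) and (3) essentially match the paper's argument (the paper first passes to $\Sigma$-imprimitive zeta elements $\mathbf{z}^\Sigma_{\mathrm{Kato}}$, but since the Euler factors have $\mu=0$ this changes nothing for part (1)).

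For part (2), however, there is a genuine gap. You correctly observe that passing from $\mathbf{z}_{\mathrm{Kato}}$ to $\mathbf{z}^\Sigma_{\mathrm{Kato}}$ and from $\mathbb{H}^2(j_*T_{f,i}(k-r))$ to $\mathrm{H}^2_{\mathrm{Iw}}(\mathbb{Q}_S/\mathbb{Q}_\infty, T_{f,i}(k-r))$ changes each $\lambda$-invariant by the \emph{same} sum of local Euler factor $\lambda$-invariants, so that
\[
\lambda\!\left(\tfrac{\mathbb{H}^1}{\mathbf{z}_{\mathrm{Kato}}}\right) - \lambda\!\left(\mathbb{H}^2\right)
= \lambda\!\left(\tfrac{\mathbb{H}^1}{\mathbf{z}^\Sigma_{\mathrm{Kato}}}\right) - \lambda\!\left(\mathrm{H}^2_{\mathrm{Iw}}\right).
\]
But you then assert without argument that the right-hand side ``leaves only invariants attached to $\overline{\rho}$.'' That is exactly the content that needs proving, and both terms individually \emph{are} lift-dependent; there is no reason a priori that their difference is not. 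The mechanism the paper uses (Proposition~\ref{prop:comparison-iwasawa-invariants}) is an index computation in mod $\pi$ cohomology: using the short exact sequence
\[
0 \to \mathrm{H}^1_{\mathrm{Iw}}(T)/\pi \to \mathrm{H}^1_{\mathrm{Iw}}(\overline{\rho}) \to \mathrm{H}^2_{\mathrm{Iw}}(T)[\pi] \to 0,
\]
the isomorphism $\mathrm{H}^2_{\mathrm{Iw}}(T)/\pi \simeq \mathrm{H}^2_{\mathrm{Iw}}(\overline{\rho})$, and the elementary counting fact $\#(X/\pi X)/\#X[\pi] = (\#\mathbb{F})^{\lambda(X)}$ when $\mu(X)=0$, one obtains
\[
\left[\mathrm{H}^1_{\mathrm{Iw}}(\overline{\rho}) : \Lambda_i/\pi\Lambda_i \cdot \overline{\mathbf{z}^\Sigma}\right] \cdot \left(\#\mathrm{H}^2_{\mathrm{Iw}}(\overline{\rho})\right)^{-1}
= (\#\mathbb{F})^{\lambda(\mathbb{H}^1/\mathbf{z}^\Sigma) - \lambda(\mathrm{H}^2_{\mathrm{Iw}})}.
\]
The left-hand side manifestly depends only on $\overline{\rho}$ and $\Sigma$ (the residual zeta elements $\overline{\mathbf{z}^\Sigma}$ for congruent $f,g$ coincide by the multiplicity-one/Ihara congruence). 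Without this mod $\pi$ index formula, or a substitute for it, the proposal does not prove constancy of the difference; ``the local pieces cancel'' only reduces the problem, it does not solve it. This is precisely the step the introduction flags as replacing the classical ``no finite submodule'' argument of Greenberg--Vatsal, and it is missing here.

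A secondary point: you suggest the hypothesis $2 \le k \le p-1$ is also needed for a ``Fontaine--Laffaille comparison required at $p$'' that replaces a ``naive Euler factor analysis'' there. In fact, no Euler factor analysis happens at $p$ in this argument at all: the localization sequence only produces local $\mathrm{H}^2_{\mathrm{Iw}}$ at primes $\ell \in S \setminus \{p, \infty\}$. The Fontaine--Laffaille hypothesis is used solely through mod $p$ multiplicity one (Theorem~\ref{thm:mod-p-multiplicity-one}) and Ihara's lemma (Theorem~\ref{thm:ihara-lemma}), both of which live entirely in the congruence analysis of the zeta element side.
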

\begin{proof}
\begin{enumerate}
\item It is proved in Corollary \ref{cor:mu-invariants-vanish-in-families} with Kato's divisibility statement (Theorem \ref{thm:kato-divisibility}).
\item It is proved in $\S$\ref{sec:connection}.
\item It immediately follows from the above two statements due to Corollary \ref{cor:reduction-to-iwasawa-invariants}.
\end{enumerate}
\end{proof}
\begin{rem}
\begin{enumerate}
\item
The assumption $\mu \left( \dfrac{ \mathbb{H}^1(j_*T_{f_0, i}(k-r)) }{ \mathbf{z}_{\mathrm{Kato}}(f_0, i, k-r) } \right) = 0$ in Theorem \ref{thm:main-theorem} is \emph{weaker} than the $\mu=0$ assumptions of various $p$-adic $L$-functions in other literatures (e.g. Corollary \ref{cor:p-adic-l-functions}) since those $p$-adic $L$-functions are the image of Kato's zeta elements under the composition of the localization map at $p$ and the corresponding Coleman maps. We do not know whether this composition of two maps preserves the mod $p$ non-vanishing in general.
\item In Theorem \ref{thm:main-theorem}.(1), we observe in particular that, if the $\mu=0$ conjecture for the fine Selmer group holds for one form in $S_k(\overline{\rho})$, then it holds for all forms in $S_k(\overline{\rho})$. The $\mu=0$ conjecture for fine Selmer groups  is due to Coates--Sujatha \cite[Conjecture A]{coates-sujatha-fine-selmer}. Indeed, the propagation of the $\mu=0$ conjecture for fine Selmer groups can also be obtained by using Proposition \ref{prop:iwasawa-invariants-H^2}, Proposition \ref{prop:iwasawa-invariants-H^2-imprimitive}, and (\ref{eqn:mod-p-H2}) only.
\item Theorem \ref{thm:main-theorem}.(2) is weaker than the $\lambda$-invariant formulas in the literature.
This is due to the fact that it is unclear whether the Iwasawa modules we deal with have no finite Iwasawa submodule.
\item One may consider the congruences between modular forms of different weights. It is possible in the ordinary case via Hida theory \cite{epw}. 
\item  Under Assumption \ref{assu:image}, we have
$$\dfrac{ \mathbb{H}^1(j_*T_{f, i}(k-r))}{ \mathbf{z}_{\mathrm{Kato}}(f, i, k-r) } \simeq \dfrac{\Lambda_i }{g_i(T)} $$
for some $g_i(T) \in \Lambda_i$ due to \cite[Theorem 12.4.(3)]{kato-euler-systems}; thus, the information of Iwasawa invariants is encoded in $g_i(T)$.
\end{enumerate}
\end{rem}
In the semi-stable ordinary case, we can remove the Fontaine--Laffaille assumption as follows.
Since we do not use any Hida deformation explicitly, our result is weaker than that of \cite{epw} but the argument is much simpler than  theirs. For example, we do not need to construct two-variable $p$-adic $L$-functions.

Denote by $S^{\mathrm{ord}}_k(\overline{\rho})$ the set of $p$-ordinary newforms of \emph{fixed} weight $k$ and level $N$ or $Np$ such that the residual representation is isomorphic to $\overline{\rho}$. Here the $p$-ordinarity means that the $p$-th Fourier coefficient of the newform is a $p$-adic unit.
\begin{thm} \label{thm:main-theorem-ordinary}
Assume that
\begin{itemize}
\item the image of $\overline{\rho}$ contains $\mathrm{SL}_2(\mathbb{F}_p)$;
\item $\overline{\rho}$ is ordinary at $p$;
\item  $\overline{\rho}$ has distinct Jordan--H\"{o}lder factors on the decomposition group at $p$;
\item $\mu \left( \dfrac{ \mathbb{H}^1(j_*T_{f_0, i}(k-r)) }{ \mathbf{z}_{\mathrm{Kato}}(f_0, i, k-r) } \right) = 0$ for one $f_0 \in S^{\mathrm{ord}}_k(\overline{\rho})$.
\end{itemize}
Then the same conclusions of Theorem \ref{thm:main-theorem} holds for $S^{\mathrm{ord}}_k(\overline{\rho})$.
\end{thm}
\begin{proof}
The key observation is that the Fontaine--Laffaille condition ($2 \leq k \leq p-1$ and $p \nmid N$)  is required only when we invoke the mod $p$ multiplicity one (Theorem \ref{thm:mod-p-multiplicity-one}) and Ihara's lemma (Theorem \ref{thm:ihara-lemma}).
For the mod $p$ multiplicity one, see \cite[Theorem 1.13]{vatsal-duke} which uses \cite[Theorem 2.1]{wiles} when $k = 2$ and \cite{hida-invent-1986}  when $k > 2$. The equivalent statement can be found in \cite[Propositions 3.1.1 and 3.3.1]{epw}.
See also \cite[$\S$3.8]{epw} for how Ihara's lemma is used in the integral period comparison (cf. \cite[Proposition 4.5]{vatsal-integralperiods-2013}).
In $\S$\ref{sec:zeta-elements}, we explain how the integral period comparison yields the congruence between Kato's zeta elements.
\end{proof}
\begin{rem}
Nakamura has recently constructed Kato's Euler system over the universal deformation space by using the $p$-adic local Langlands correspondence and the local-global compatibility result.
Combining his construction with the technique in this paper, the Fontaine--Laffaille condition in Theorem \ref{thm:main-theorem} could be entirely removed. See \cite{nakamura-kato-deformation} for details.
\end{rem}

\subsection{Applications to main conjectures with $L$-functions}
We describe the relation with other main conjectures focusing more on the non-ordinary case. We do not recall the formulations of the Greenberg-style, $\pm$-, and $\sharp/\flat$-main conjectures in this article, but one can find details in \cite{kato-euler-systems}, \cite{kobayashi-thesis}, and \cite{sprung-ap-nonzero} for modular forms of weight two.

When $p$ divides $a_p(f)$ and the weight of $f$ is two, let
$$\widetilde{L}^{\bullet}_p(f^*, -i) =
\left\lbrace \begin{array}{ll}
L^{\bullet}_p(f^*, -i) & \textrm{if } i = 0 \\
\dfrac{1}{\gamma-1} \cdot L^{\bullet}_p(f^*, -i) &  \textrm{if } i \neq 0 \textrm{ and } \bullet = -
\end{array} \right.
$$
where $L^{\bullet}_p(f^*, -i)$ is the $\omega^{-i}$-component of relevant integral $p$-adic $L$-functions, $\bullet \in \lbrace +, - , \sharp, \flat \rbrace$, and $\gamma$ is a generator of $\Lambda_i$. It is known that $\widetilde{L}^{\bullet}_p(f^*, -i) \in \Lambda_i$ under Assumption \ref{assu:image}. Note that we follow Kobayashi's convention for $\pm$.
Let $\mathrm{Im} \left( \mathrm{Col}^{\sharp/\flat, i} \right) \subseteq \Lambda_i$ be the image of the $\sharp/\flat$-Coleman maps defined by
 $\mathrm{Col}^{\flat, i} = (\underline{\mathrm{Col}}_1)^{\omega^i}$ and
 $\mathrm{Col}^{\sharp, i} = (\underline{\mathrm{Col}}_2)^{\omega^i}$, respectively, following  \cite{lei-loeffler-zerbes_wach}.
\begin{thm}[Kato, Kobayashi, Lei, Sprung, Lei--Loeffler--Zerbes] \label{thm:equivalent-main-conjecture}
Keep Assumption \ref{assu:image}.
The $\omega^i$-component of Kato's main conjecture (with $r=1$) is equivalent to:
\begin{enumerate}
\item $\left(  L_p(f^*, -i) \right)  = \mathrm{char}_{\Lambda_i}\mathrm{Sel}(\mathbb{Q}_\infty, A_{f^*, -i}(1))^\vee$ when $a_p(f)$ is a $\pi$-adic unit, or
\item $\left( \widetilde{L}^{\pm}_p(f^*, -i) \right) = \mathrm{char}_{\Lambda_i} \mathrm{Sel}^{\pm}(\mathbb{Q}_\infty, A_{f^*, -i}(1))^\vee$ when $a_p(f) = 0$ and the weight is two, or
\item $\mathrm{char}_{\Lambda_i} \left( \mathrm{Im} \left( \mathrm{Col}^{\sharp/\flat, i} \right) / L^{\sharp/\flat}_p(f^*, -i) \right)  = \mathrm{char}_{\Lambda_i} \mathrm{Sel}^{\sharp/\flat}(\mathbb{Q}_\infty, A_{f^*, -i}(1))^\vee$ when $a_p(f)$ is divisible by $\pi$
\end{enumerate}
where $A_{f^*, -i} \coloneqq V_{f^*, -i} / T_{f^*, -i}$.
\end{thm}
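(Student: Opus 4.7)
The plan is to deduce each of the three equivalences from a common Poitou--Tate framework together with the appropriate ``Coleman-type'' map at $p$. More precisely, for any prime $\mathfrak{p}_i$ of $\Lambda_i$, global duality gives an exact sequence
\[
0 \to \mathbb{H}^1_{\mathrm{Iw}}(\mathbb{Q}_S/\mathbb{Q}_\infty, T_f(k-r))^{\omega^i} \xrightarrow{\mathrm{loc}_p} \mathbb{H}^1_{\mathrm{Iw}}(\mathbb{Q}_{\infty,p}, T_f(k-r))^{\omega^i} \to \mathrm{Sel}^{\mathrm{rel}}(\mathbb{Q}_\infty, A_{f^*,-i}(1))^\vee \to \mathbb{H}^2_{\mathrm{Iw}}(\ldots)^{\omega^i} \to 0,
\]
where $\mathrm{Sel}^{\mathrm{rel}}$ is the relaxed-at-$p$ Selmer group, modulo identifying $\mathbb{H}^1(j_*T_{f,i}(k-r))$ with its unramified-outside-$S$ cousin up to the second local Iwasawa cohomologies at $\ell \in S \smallsetminus \{p\}$ (this discrepancy is controlled by the Euler factors studied in \S\ref{sec:local}). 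The strategy, in each of the three cases, is to push Kato's zeta element through an appropriate integral map on the local $\mathbb{H}^1$ at $p$ whose image is exactly the integral $p$-adic $L$-function, and then read off the characteristic ideal identity by combining this with the displayed exact sequence.

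In the ordinary case ($a_p(f)$ a $\pi$-adic unit), I would use Kato's explicit reciprocity law \cite{kato-euler-systems}: the Perrin-Riou/Coleman map
\[
\mathrm{Col} : \mathbb{H}^1_{\mathrm{Iw}}(\mathbb{Q}_{\infty,p}, T_f(k-r))^{\omega^i} \to \Lambda_i
\]
is an isomorphism up to the unit-root Euler factor and sends $\mathbf{z}_{\mathrm{Kato}}(f,i,k-r)$ to $L_p(f^*,-i)$ (Kato's Theorem 16.6). Under $\mathrm{Col}$, the image of $\mathrm{loc}_p$ matches the divisible-at-$p$ condition in the classical Greenberg Selmer group, and the snake lemma applied to the displayed sequence converts Kato's main conjecture into the ordinary main conjecture.

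In the supersingular cases, the Coleman map is no longer surjective, and one must replace it by the Kobayashi/Lei $\pm$-splitting (when $a_p=0$) or the Sprung/Lei--Loeffler--Zerbes $\sharp/\flat$-logarithmic-matrix decomposition (when $a_p(f)$ is divisible by $\pi$ but nonzero). In both instances one constructs a pair of maps
\[
\mathrm{Col}^{\pm}, \ \mathrm{Col}^{\sharp/\flat} : \mathbb{H}^1_{\mathrm{Iw}}(\mathbb{Q}_{\infty,p}, T_f(k-r))^{\omega^i} \to \Lambda_i,
\]
whose kernels are exactly the $\pm$ resp.\ $\sharp/\flat$ local conditions cutting out the signed Selmer groups $\mathrm{Sel}^{\pm}$, $\mathrm{Sel}^{\sharp/\flat}$. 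The explicit reciprocity law (Kobayashi \cite{kobayashi-thesis}, Lei \cite{lei-compositio}, Sprung \cite{sprung-ap-nonzero}, Lei--Loeffler--Zerbes \cite{lei-loeffler-zerbes_wach}) identifies $\mathrm{Col}^{\bullet}(\mathbf{z}_{\mathrm{Kato}})$ with $\widetilde{L}^{\bullet}_p(f^*,-i)$; the extra factor of $1/(\gamma-1)$ in the definition of $\widetilde{L}^{\bullet}_p$ for $i\neq 0$ precisely accounts for the pole of the half of the signed Coleman map at the trivial character on the non-trivial branches. Feeding this into the Poitou--Tate sequence and taking characteristic ideals converts the three signed main conjectures into the $\omega^i$-component of Kato's main conjecture.

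The main obstacle is bookkeeping rather than a genuine new difficulty: one must (i) verify that the auxiliary Euler factors introduced when passing from $\mathbb{H}^a(j_*T_{f,i}(k-r))$ to the ``$S$-imprimitive'' Iwasawa cohomologies balance correctly on both sides of the equality, and (ii) check that the denominator $\gamma-1$ appearing in $\widetilde{L}^{\bullet}_p$ on non-trivial branches matches the cokernel of $\mathrm{Col}^{\bullet}$ on those branches. Assumption \ref{assu:image} (irreducibility of $\overline{\rho}$ in particular) ensures that the terms whose characteristic ideal is trivial on integral points --- such as finite-index submodules arising from global torsion --- do not contaminate the computation, so once (i) and (ii) are handled the three equivalences follow uniformly.
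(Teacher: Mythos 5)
The paper's own ``proof'' of this theorem is a one-line citation: it simply invokes \cite[$\S$17.13]{kato-euler-systems} for the ordinary case, \cite[Theorem 7.4]{kobayashi-thesis} and \cite[Corollary 6.9]{lei-compositio} for the $\pm$-case, and \cite[Proposition 7.19]{sprung-ap-nonzero} and \cite[Corollary 6.6]{lei-loeffler-zerbes_wach} for the $\sharp/\flat$-case, with no argument of its own. What you have done is sketch, in unified language, the arguments actually carried out inside those references: the Poitou--Tate exact sequence relating global Iwasawa cohomology, local $\mathrm{H}^1_{\mathrm{Iw}}$ at $p$, and the dual of a Selmer group; the Perrin-Riou/Coleman map in the ordinary case and its signed $\pm$ resp.\ $\sharp/\flat$ replacements in the non-ordinary cases; the explicit reciprocity laws of Kato, Kobayashi, Lei, Sprung, and Lei--Loeffler--Zerbes identifying the image of $\mathbf{z}_{\mathrm{Kato}}$ with the integral $p$-adic $L$-functions; and the bookkeeping at bad primes and at the trivial-zero branch (the $\gamma-1$ denominator). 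That outline is correct, and it is precisely the content the paper delegates to the cited works. The trade-off is the expected one: the paper's citation-only proof is economical but opaque, whereas your version makes the mechanism visible, at the cost of having to verify in detail the two ``bookkeeping'' points you flag --- (i) that the Euler factors from $\S\ref{sec:local}$ match on both sides when passing between $\mathbb{H}^a(j_*T_{f,i}(k-r))$ and the $S$-cohomology, and (ii) that the $(\gamma-1)$-normalisation in $\widetilde L^{\bullet}_p$ on nontrivial branches is exactly the cokernel of the relevant half of the signed Coleman map --- both of which are indeed handled in the cited references and would need to be reproduced to make your sketch self-contained.
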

\begin{proof}
This is the combination of \cite[$\S$17.13]{kato-euler-systems}, \cite[Theorem 7.4]{kobayashi-thesis}, \cite[Corollary 6.8]{lei-compositio}, \cite[Proposition 7.19]{sprung-ap-nonzero}, and \cite[Corollary 6.6]{lei-loeffler-zerbes_wach}.
\end{proof}
The following corollary is immediate from Theorem \ref{thm:main-theorem} and Theorem \ref{thm:equivalent-main-conjecture}.
\begin{cor} \label{cor:p-adic-l-functions}
We keep all the assumptions in Theorem \ref{thm:main-theorem}. Let $f \in S_k(\overline{\rho})$.
\begin{enumerate}
\item Assume that $f$ is ordinary at $p$.
 If
$$ \left( L_p(f^*, -i) \right) = \mathrm{char}_{\Lambda_i}\mathrm{Sel}(\mathbb{Q}_\infty, A_{f^*, -i}(1))^\vee,$$
then
$$ \left( L_p(g^*, -i) \right) = \mathrm{char}_{\Lambda_i}\mathrm{Sel}(\mathbb{Q}_\infty, A_{g^*, -i}(1))^\vee$$
 for all $g \in S_k(\overline{\rho})$.
\item Assume that $a_p(f) = 0$ and $k=2$.
 If
$$ \left( \widetilde{L}^{\pm}_p(f^*, -i) \right) = \mathrm{char}_{\Lambda_i}\mathrm{Sel}^{\pm}(\mathbb{Q}_\infty, A_{f^*, -i}(1))^\vee,$$
then
$$ \left( \widetilde{L}^{\pm}_p(g^*, -i) \right) = \mathrm{char}_{\Lambda_i}\mathrm{Sel}^{\pm}(\mathbb{Q}_\infty, A_{g^*, -i}(1))^\vee$$
 for all $g \in S_k(\overline{\rho})$.
\item  Assume that $p \mid a_p(f)$.
 If
$$ \mathrm{char}_{\Lambda_i} \left( \mathrm{Im} \left( \mathrm{Col}^{\sharp/\flat, i} \right) / L^{\sharp/\flat}_p(f^*, -i) \right) = \mathrm{char}_{\Lambda_i}\mathrm{Sel}^{\sharp/\flat}(\mathbb{Q}_\infty, A_{f^*, -i}(1))^\vee,$$
 then
$$ \mathrm{char}_{\Lambda_i} \left( \mathrm{Im} \left( \mathrm{Col}^{\sharp/\flat, i} \right) / L^{\sharp/\flat}_p(g^*, -i) \right) = \mathrm{char}_{\Lambda_i}\mathrm{Sel}^{\sharp/\flat}(\mathbb{Q}_\infty, A_{g^*, -i}(1))^\vee$$
 for all $g \in S_k(\overline{\rho})$.
\end{enumerate}
\end{cor}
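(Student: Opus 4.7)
The plan is a short round-trip through the two preceding theorems. The hypotheses of Theorem \ref{thm:main-theorem} are in force, so part (3) of that theorem propagates the $\omega^i$-component of Kato's main conjecture (with any choice of $r$, in particular $r=1$) from any one member of $S_k(\overline{\rho})$ to every member of $S_k(\overline{\rho})$. Theorem \ref{thm:equivalent-main-conjecture} then provides the dictionary between Kato's main conjecture (with $r=1$) and each of the three $p$-adic $L$-function formulations that appear in the corollary.

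Concretely, for (1) I would apply Theorem \ref{thm:equivalent-main-conjecture}.(1) to $f$ to translate the hypothesis
\[
L_p(f^*, -i) = \mathrm{char}_{\Lambda_i}\mathrm{Sel}(\mathbb{Q}_\infty, A_{f^*, -i}(1))^\vee
\]
into the statement that the $\omega^i$-component of Kato's main conjecture holds for $f$. Theorem \ref{thm:main-theorem}.(3) then implies that the same $\omega^i$-component holds for every $g \in S_k(\overline{\rho})$. Finally, I would apply Theorem \ref{thm:equivalent-main-conjecture}.(1) in the reverse direction to each such $g$ to recover the desired equality for $g$. Cases (2) and (3) of the corollary are proved identically, substituting Theorem \ref{thm:equivalent-main-conjecture}.(2) or Theorem \ref{thm:equivalent-main-conjecture}.(3) for Theorem \ref{thm:equivalent-main-conjecture}.(1), and reinterpreting the integral $p$-adic $L$-function symbol $\widetilde{L}^{\bullet}_p$ correspondingly.

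The only point that demands a word is compatibility of the reduction-type hypothesis across the congruence class. For (1) the condition that $a_p(g)$ is a $\pi$-adic unit, and for (3) the condition $\pi \mid a_p(g)$, are both determined by $\overline{\rho}$ alone via the trace of $\overline{\rho}(\mathrm{Frob}_p)$; they therefore transfer automatically from $f$ to every $g \in S_k(\overline{\rho})$, which is exactly what is needed to invoke Theorem \ref{thm:equivalent-main-conjecture} at $g$. For (2) the stronger condition $a_p(g) = 0$ is a genuine constraint on $g$, so the statement is to be read as applying to those $g \in S_k(\overline{\rho})$ with $a_p(g) = 0$; the companion statement (3), which uses the weaker hypothesis $\pi \mid a_p(g)$ shared by the whole class, then picks up the remaining $g$ through the $\sharp/\flat$-formalism.

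I expect no genuine obstacle: once Theorem \ref{thm:main-theorem} and Theorem \ref{thm:equivalent-main-conjecture} are in hand, the corollary is a purely formal consequence, and the only verification required is the elementary observation above that the ordinary vs.\ non-ordinary dichotomy is a residual invariant.
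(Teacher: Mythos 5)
Your proof is correct and matches the paper's own argument: the paper dismisses the corollary as ``immediate from Theorem \ref{thm:main-theorem} and Theorem \ref{thm:equivalent-main-conjecture},'' which is exactly the translate--propagate--translate-back chain you lay out. Your additional remarks are well placed: the ordinary/non-ordinary dichotomy is indeed a residual invariant in the Fontaine--Laffaille range $2\leq k\leq p-1$ with $p\nmid N$ (so Theorem \ref{thm:equivalent-main-conjecture} is applied with the same case at $g$ as at $f$ in parts (1) and (3)), and your caveat that $a_p(g)=0$ in case (2) is a genuine extra constraint on $g$, not transferred by congruence, is a sharpening that the paper's terse statement leaves implicit.
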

\begin{rem}
The mod $p$ non-vanishing of Kato's zeta elements is weaker than vanishing of $\mu$-invariants of all the above $p$-adic $L$-functions as mentioned before.
\end{rem}
\begin{rem}
In the work of Skinner--Urban \cite{skinner-urban}, the ($\omega^0$-component of the) main conjecture for a large class of elliptic curves and modular forms at good ordinary primes is proved; more precisely, the main conjecture is proved under the following conditions:
\begin{itemize}
\item $f$ is good ordinary at $p$;
\item $k \equiv 2 \pmod{p-1}$, so only $k=2$ is allowed in our setting;
\item $\psi = \mathbf{1}$, so the Nebentypus is trivial;
\item $r = k-1$;
\item $i=0$, so $\mathbb{Q}_{\infty}/\mathbb{Q}$ is only considered;
\item there exists a prime $\ell \neq p$ such that $ \ell \Vert N(\overline{\rho})$.
\end{itemize}
The second and third conditions come from the anticyclotomic input, the work of Vatsal on vanishing of anticyclotomic $\mu$-invariants \cite{vatsal-duke}. The second condition on weight would be removed by the work of Chida--Hsieh \cite[Remark 1 after Theorem C]{chida-hsieh-p-adic-L-functions}. 
The fourth condition follows from the convention of the Galois modules in the Selmer groups in \cite[$\S$1.1]{skinner-urban}.
Note that our main theorem (Theorem \ref{thm:main-theorem}) applies without these assumptions Skinner--Urban made.
\end{rem}

\section{``Prime-to-$p$ local'' Iwasawa theory} \label{sec:local}
We quickly review \cite[Proposition 2.4]{gv} under our setting.
\subsection{Euler factors} \label{subsec:euler-factor}
We recall several Euler factors following \cite[Propositions 8.7, 8.10 and 8.12]{kato-euler-systems}.
Let $\ell$ be a prime different from $p$.
\begin{defn}
\begin{enumerate}
\item At the level of $\mathrm{H}^1(\mathbb{Z}[1/p], j_* V_{k, \mathbb{Z}_p}(Y(p^nN))(k-r))$, we define
$$E_\ell(Y(p^nN), r) \coloneqq \left\lbrace
\begin{array}{ll}
 1 - T'(\ell) \cdot \begin{pmatrix}
1/\ell & 0 \\ 0 & 1
\end{pmatrix}^* \cdot \ell^{-r} + \begin{pmatrix}
1/\ell & 0 \\ 0 & 1/\ell
\end{pmatrix}^* \cdot \ell^{k-1-2r}  & \textrm{ if } \ell \nmid N \\
 1 - T'(\ell) \cdot \begin{pmatrix}
1/\ell & 0 \\ 0 & 1
\end{pmatrix}^* \cdot \ell^{-r}  & \textrm{ if } \ell \mid N
\end{array} \right.$$
where $T'(\ell)$ is the dual Hecke operator at $\ell$ as defined in \cite[$\S$4.9]{kato-euler-systems}.
\item At the level of $\mathrm{H}^1(\mathbb{Z}[\zeta_{p^n}, 1/p], j_* V_{k, \mathbb{Z}_p}(Y_1(N))(k-r))$, we define
$$E_\ell(Y_1(N), p^n, r) \coloneqq 1 - T'(\ell) \cdot \sigma^{-1}_\ell \cdot \ell^{-r} + \Delta'(\ell) \cdot \sigma^{-2}_\ell \cdot \ell^{k-1-2r} $$
where
$\sigma_\ell$ is the arithmetic Frobenius at $\ell$ in $\mathrm{Gal}(\mathbb{Q}(\zeta_{p^n})/\mathbb{Q})$,
$\Delta'(\ell) = \begin{pmatrix}
\ell & 0 \\ 0 & 1/\ell
\end{pmatrix}^*$ if $\ell \nmid N$ and $\Delta'(\ell) = 0$ otherwise.
\item At the level of $\mathrm{H}^1(\mathbb{Z}[\zeta_{p^n}, 1/p], j_* T_{f}(k-r))$, we define
$$E_\ell(f^*, p^n, r) \coloneqq  1 - \overline{a_\ell(f)} \cdot \sigma^{-1}_\ell \cdot \ell^{-r} + \overline{\psi}(\ell) \cdot \sigma^{-2}_\ell \cdot \ell^{k-1-2r}  .$$
where $\sigma_\ell$ is the arithmetic Frobenius element in $\mathrm{Gal}(\mathbb{Q}(\zeta_{p^n})/\mathbb{Q})$.
\item At the level of Iwasawa cohomologies, we define
$$\mathcal{E}_\ell(f^*, r) \coloneqq E_\ell(f^*, p^\infty, r)=  1 - \overline{a_\ell(f)} \cdot \sigma^{-1}_\ell \cdot \ell^{-r} + \overline{\psi}(\ell) \cdot \sigma^{-2}_\ell \cdot \ell^{k-1-2r}  \in \Lambda$$
and
$$\mathcal{E}_\ell(f^*, -i, r) \coloneqq 1 - \overline{a_\ell(f)} \cdot \omega^{-i} \left( \sigma^{-1}_\ell \right) \cdot \langle \sigma^{-1}_\ell \rangle \cdot \ell^{-r} + \overline{\psi}(\ell) \cdot \omega^{-i} \left( \sigma^{-2}_\ell \right) \cdot \langle \sigma^{-2}_\ell \rangle \cdot \ell^{k-1-2r} \in \Lambda_{-i} $$
where $\sigma_\ell$ is the arithmetic Frobenius element in $\mathrm{Gal}(\mathbb{Q}(\zeta_{p^\infty})/\mathbb{Q})$ and
$\langle - \rangle : \mathbb{Z}^\times_p \to 1 + p\mathbb{Z}_p$ is the projection to 1-units.
\end{enumerate}
\end{defn}
\subsection{Local cohomologies} \label{subsec:local-cohomologies}
Let $\ell$ be a prime different from $p$ and $\eta$ be a prime of $\mathbb{Q}_\infty$ lying above $\ell$.
Then we have the following statement.
\begin{prop} \label{prop:gv-local-H^0}
The ideal $\mathrm{char}_{\Lambda_{-i}} \left( \bigoplus_{\eta \mid \ell} \mathrm{H}^0(\mathbb{Q}_{\infty, \eta}, \mathrm{Hom}(T_{f^*, -i}(r), ( F_\pi /\mathcal{O}_\pi ) (1)  ))^{ \vee} \right)$ is generated by $\mathcal{E}_\ell(f^*, -i, r)$ over $\Lambda_{-i}$ where $(-)^\vee$ means the Pontryagin dual.
\end{prop}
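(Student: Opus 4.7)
The plan is to compute the Pontryagin dual prime-by-prime over $\eta \mid \ell$ and match it with the factorization $\mathcal{E}_\ell(f^*, -i, r) = \prod_{\eta \mid \ell} P_\eta(\sigma_\eta^{-1})$ already established in the discussion preceding the proposition.

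First, I would unwind the coefficient module. Using the duality $V_{f^*} \simeq \mathrm{Hom}_{F_\pi}(V_f, F_\pi)(1-k)$ recorded earlier, there is a canonical $G_{\mathbb{Q}_\ell}$-equivariant isomorphism
\[
\mathrm{Hom}_{\mathcal{O}_\pi}\bigl(T_{f^*,-i}(r), (F_\pi/\mathcal{O}_\pi)(1)\bigr) \simeq A_{f,i}(k-r) := V_{f,i}(k-r)/T_{f,i}(k-r),
\]
and the choice of lattice is immaterial by Assumption \ref{assu:image}.

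Second, I would compute the local $H^0$ in two stages. Since $\ell \neq p$, the extension $\mathbb{Q}_{\infty,\eta}/\mathbb{Q}_\ell$ is unramified, so the inertia subgroup of $G_{\mathbb{Q}_{\infty,\eta}}$ coincides with $I_\ell$ and
\[
H^0(\mathbb{Q}_{\infty,\eta}, A_{f,i}(k-r)) = \bigl(A_{f,i}(k-r)^{I_\ell}\bigr)^{\sigma_\eta = 1}.
\]
By the eigenvalue analysis just before the proposition, together with \eqref{eqn:euler-factor-functional-equation}, $\sigma_\eta$ acts semisimply on $V_{f,i}(k-r)^{I_\ell}$ (over a splitting field) with eigenvalues $\overline{\alpha_j}\ell^{-r}\omega^{-i}(\mathrm{Frob}_\ell^{-1})$ for $j=1,\dots,e_\ell$, and the $\sigma_\eta$-fixed points in the corresponding cofinite eigenspace decomposition of $A_{f,i}(k-r)^{I_\ell}$ are nontrivial precisely for the $d_\ell$ indices $j$ for which this eigenvalue is a principal unit.

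Third, I would pass to the Pontryagin dual and extract the characteristic polynomial as a module over $\mathcal{O}_\pi\llbracket \mathrm{Gal}(\mathbb{Q}_{\infty,\eta}/\mathbb{Q}_\ell)\rrbracket$. Each principal-unit eigenspace gives a cyclic $\mathcal{O}_\pi$-summand, and Pontryagin duality exchanges the $\sigma_\eta$-action with the $\sigma_\eta^{-1}$-action, so that the characteristic polynomial of $H^0(\mathbb{Q}_{\infty,\eta}, A_{f,i}(k-r))^\vee$ becomes precisely
\[
P_\eta(\sigma_\eta^{-1}) = \prod_{j=1}^{e_\ell}\bigl(1 - \overline{\alpha_j}\ell^{-r}\omega^{-i}(\mathrm{Frob}_\ell^{-1})\,\sigma_\eta^{-1}\bigr).
\]
Equivalently, one may invoke the standard local Iwasawa duality
\[
H^0(\mathbb{Q}_{\infty,\eta}, A_{f,i}(k-r))^\vee \simeq \mathrm{H}^2_{\mathrm{Iw}}(\mathbb{Q}_{\infty,\eta}, T_{f^*,-i}(r))
\]
and compute $\mathrm{H}^2_{\mathrm{Iw}}$ directly from the Frobenius action on the inertia invariants; either route produces the same answer.

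Finally, I would assemble the contributions over $\eta \mid \ell$: the direct sum $\bigoplus_{\eta \mid \ell} H^0(\mathbb{Q}_{\infty,\eta}, \cdot)^\vee$ is the $\Lambda_{-i}$-module induced from the decomposition subgroup $\mathrm{Gal}(\mathbb{Q}_{\infty,\eta}/\mathbb{Q}_\ell)$, so a Shapiro-type calculation identifies its $\Lambda_{-i}$-characteristic ideal with the product of the local ones, namely $\prod_{\eta\mid\ell} P_\eta(\sigma_\eta^{-1}) = \mathcal{E}_\ell(f^*, -i, r)$.

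The main technical obstacle lies in the third step: one has to track carefully how Pontryagin duality transforms the $\sigma_\eta$-action on $H^0$ and verify that the eigenvalues $\overline{\alpha_j}\ell^{-r}\omega^{-i}(\mathrm{Frob}_\ell^{-1})$ appear in the characteristic polynomial exactly through the factors $1 - \overline{\alpha_j}\ell^{-r}\omega^{-i}(\mathrm{Frob}_\ell^{-1})\sigma_\eta^{-1}$---that is, through the standard involution $\sigma \leftrightarrow \sigma^{-1}$ under $(-)^\vee$, in harmony with \eqref{eqn:euler-factor-functional-equation}.
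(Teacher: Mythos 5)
Your argument follows essentially the same route as the paper's: identify the coefficient module, compute the Frobenius eigenvalues on the inertia coinvariants/invariants, count the principal-unit ones to read off the $\mathcal{O}_\pi$-corank of the local $\mathrm{H}^0$, and then match the characteristic polynomial against the factorization of $\mathcal{E}_\ell(f^*,-i,r)$. The paper proves the proposition by exactly this computation in the paragraph preceding it, using $V_{f^*,-i}(r)^\star$ and the identity $(V_{f^*,-i}(r)_{I_\ell})^\star = V_{f,i}(k-r)^{I_\ell}$, which is the same as your identification $\mathrm{Hom}(T_{f^*,-i}(r),(F_\pi/\mathcal{O}_\pi)(1)) \simeq A_{f,i}(k-r)$; the functional-equation relation \eqref{eqn:euler-factor-functional-equation} is the paper's version of your eigenvalue matching. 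Your alternative route through $\mathrm{H}^2_{\mathrm{Iw}}$ via local Tate duality is also taken by the paper immediately afterwards, in equations \eqref{eqn:H^0-H^2-finite}--\eqref{eqn:H^0-H^2} and Corollary \ref{cor:local-H^2}.

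Two small points of precision. First, writing $(A_{f,i}(k-r)^{I_\ell})^{\sigma_\eta=1}$ is a slight abuse: $\mathrm{H}^0(\mathbb{Q}_{\infty,\eta},A_{f,i}(k-r)) = (A_{f,i}(k-r)^{I_\ell})^{\mathrm{Gal}(\mathbb{Q}_\ell^{\mathrm{ur}}/\mathbb{Q}_{\infty,\eta})}$, and since $\mathrm{Gal}(\mathbb{Q}_\ell^{\mathrm{ur}}/\mathbb{Q}_{\infty,\eta})$ acts through a finite group of order prime to $p$, the fixed part is exactly the span of eigenvectors whose $\mathrm{Frob}_\ell$-eigenvalue is a principal unit; only then does the residual $\mathcal{O}_\pi\llbracket D_\eta\rrbracket$-module structure become governed by the single topological generator $\sigma_\eta$. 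Second, semisimplicity of $\sigma_\eta$ on $V_{f,i}(k-r)^{I_\ell}$ is not actually needed for the characteristic-ideal computation, and asserting it as a fact is an overclaim you should drop; what one uses is the characteristic polynomial of $\mathrm{Frob}_\ell^{-1}$ and the principal-unit criterion, which is indifferent to Jordan blocks. Neither point affects the validity of your conclusion, and both are handled the same way (with the same level of brevity) in the paper.
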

\begin{proof}
See \cite[Proposition 2.4]{gv}.
By \cite[Proposition 2]{greenberg-general-iwasawa}, it is known that
\[
\mathrm{H}^1(\mathbb{Q}_{\infty, \eta}, A_{f^*,-i}(r))^{\iota, \vee} \sim \mathrm{H}^0(\mathbb{Q}_{\infty, \eta}, \mathrm{Hom}_{F_\pi}( T_{f^*,-i}(r),( F_\pi /\mathcal{O}_\pi ) (1) ) )^{\vee}
\]
where $\sim$ means a pseudo-isomorphism over $\Lambda_{-i}$.
\end{proof}
\begin{defn} \label{defn:involution}
The involution map $\iota : \Lambda \to \Lambda$ is defined by $\gamma \mapsto \gamma^{-1}$ where $\gamma \in \mathrm{Gal}(\mathbb{Q}(\zeta_{p^\infty})/\mathbb{Q})$.
For a $\Lambda$-module $M$, $M^\iota$ is defined by the same underlying module with the inverse $\Lambda$-action via $\iota$. Especially, if $M$ is a $\Lambda_{i}$-module, then $M^\iota$ is a $\Lambda_{-i}$-module.
Denote by $f^{\iota} \in \Lambda_i$ the image of $f \in \Lambda_{-i}$ under $\iota$.
\end{defn}
Combining with the local Tate duality, Proposition \ref{prop:gv-local-H^0} implies the following statement.
\begin{cor} \label{cor:local-H^2}
The ideal $\mathrm{char}_{\Lambda_{-i}} \left( \bigoplus_{\eta \mid \ell} \mathrm{H}^2_{\mathrm{Iw}}(\mathbb{Q}_{\infty, \eta}, T_{f^*, -i}(r)) \right)$ is generated by $\mathcal{E}_\ell(f^*, -i, r)$ over $\Lambda_{-i}$ with $\mu = 0$ and $\lambda = s_\ell \cdot d_\ell$
where $s_\ell$ is the number of primes of $\mathbb{Q}_\infty$ lying above $\ell$
and
$d_{\ell} = \dim_{F_\pi}\mathrm{H}^1(\mathbb{Q}_{\infty, \eta}, V_{f, i}(k-r))$.
\end{cor}
Applying the same argument to the dual representation, the following statement follows.
\begin{cor} \label{cor:local-H^2-reversed}
The ideal $\mathrm{char}_{\Lambda_i} \left( \bigoplus_{\eta \mid \ell} \mathrm{H}^2_{\mathrm{Iw}}(\mathbb{Q}_{\infty, \eta}, T_{f, i}(k-r)) \right)$ is generated by $\mathcal{E}_\ell(f, i, k-r) = \mathcal{E}_\ell(f^*, -i, r)^\iota$ over $\Lambda_i$ with $\mu = 0$ and $\lambda = s_\ell \cdot d_\ell$.
\end{cor}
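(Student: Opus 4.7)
The plan is to repeat, \emph{mutatis mutandis}, the derivation of Corollary \ref{cor:local-H^2}, with the representation $T_{f,i}(k-r)$ now playing the role formerly played by $T_{f^*,-i}(r)$, and then to invoke the involution relation (\ref{eqn:euler-factor-iota}) to rewrite the resulting Euler factor.

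First I would observe that the entire structure of Subsection \ref{subsec:local-cohomologies} --- the Pontryagin-dual $\mathrm{H}^0$ description via eigenvalues of Frobenius on $I_\ell$-coinvariants, the identification with the local $\mathrm{H}^1$ through (\ref{eqn:H^1-H^0}), and Tate local duality (\ref{eqn:H^0-H^2}) --- is formulated in a completely symmetric way in a Galois representation and its Kummer dual $V^\star = \mathrm{Hom}_{F_\pi}(V, F_\pi(1))$. Since the duality of modular Galois representations yields $V_{f,i}(k-r)^\star = V_{f^*,-i}(r)$, the same chain of reasoning applies with the two representations swapped. Running the eigenvalue analysis on $(V_{f,i}(k-r))_{I_\ell}$ --- whose Frobenius eigenvalues are, by the functional equation (\ref{eqn:euler-factor-functional-equation}), the inverses of those on $(V_{f^*,-i}(r))_{I_\ell}$ multiplied by $\ell^{r} \omega^i(\mathrm{Frob}_\ell^{-1})/(\ell^{-r}\omega^{-i}(\mathrm{Frob}_\ell^{-1}))$ --- produces the polynomial $\prod_{\eta \mid \ell} P_\eta^{\iota}(\sigma_\eta^{-1}) \in \Lambda_i$ as the natural analogue of $\prod_{\eta \mid \ell} P_\eta(\sigma_\eta^{-1}) \in \Lambda_{-i}$ used before.

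Next I would use (\ref{eqn:euler-factor-iota}) to identify this product with $\mathcal{E}_\ell(f,i,k-r)$, which by definition equals $\mathcal{E}_\ell(f^*,-i,r)^{\iota}$; Tate duality then passes from $\mathrm{H}^0$ of the Kummer dual to $\mathrm{H}^2_{\mathrm{Iw}}(\mathbb{Q}_{\infty,\eta}, T_{f,i}(k-r))$ exactly as in the derivation of Corollary \ref{cor:local-H^2}. To conclude $\mu = 0$ and $\lambda = s_\ell \cdot d_\ell$, I would observe that the involution $\iota$ on $\Lambda$ preserves the $\mu$- and $\lambda$-invariants of any characteristic ideal, so these invariants transfer directly from the values already computed in Corollary \ref{cor:local-H^2}. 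The main obstacle --- really the only point requiring care --- is bookkeeping: checking that the isotypic decomposition now lands in $\Lambda_i$ rather than $\Lambda_{-i}$, and that $\deg P_\eta^{\iota} = \deg P_\eta = d_\ell$, which is a purely formal consequence of $\iota$ being an $\mathcal{O}_\pi$-linear involution. There is no new conceptual content beyond that already present in the derivation of Corollary \ref{cor:local-H^2}.
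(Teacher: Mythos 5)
Your proposal is correct and follows essentially the same route as the paper, which justifies the corollary in one line ("Applying the same argument to the dual representation with (\ref{eqn:euler-factor-iota})..."); you have simply unpacked that sentence by rerunning the eigenvalue analysis of \S\ref{subsec:local-cohomologies} on $V_{f,i}(k-r)$, identifying the resulting polynomial via (\ref{eqn:euler-factor-iota}), and observing that $\iota$ preserves $\mu$ and $\lambda$. (The parenthetical multiplicative factor relating the Frobenius eigenvalues is slightly garbled --- by (\ref{eqn:euler-factor-functional-equation}) the eigenvalues on the two $I_\ell$-coinvariant spaces are exactly inverse to each other, with no extra factor --- but this is a cosmetic slip and does not affect the argument.)
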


\section{The zeta element side} \label{sec:zeta-elements}
We identify
$$ V_{k,\mathbb{Z}_p}(Y_1(N)) = \mathrm{H}^1_{\mathrm{\acute{e}t}}(Y_1(N)_{\overline{\mathbb{Q}}}, \mathrm{Sym}^{k-2}_{\mathbb{Z}_p}(\mathcal{H}^1_p)) \simeq \mathrm{H}^1(Y_1(N)(\mathbb{C}), \mathrm{Sym}^{k-2}(\mathbb{Z}^2_p) ) $$
and the former naturally admits the action of $\mathrm{Gal}(\overline{\mathbb{Q}}/\mathbb{Q})$. See \cite[Thm. 1.1]{faltings-jordan} for the properties of this Galois representation.
The complex conjugation acts on the latter and it also induces the action on $T_f$.
Denote by $T^{\pm}_f$ the part on which the complex conjugation acts by $\pm 1$, respectively. The same rule applies to $V_f$.
\subsection{Hecke algebras and Galois representations} \label{subsec:hecke-algebras}
We denote by $\mathbb{T}(N)$
the $\mathbb{Z}_p$-subalgebra of $\mathrm{End}_{\mathbb{Z}_p} \left( V_{k,\mathbb{Z}_p}(Y_1(N)) \right)$
generated by all the Hecke operators $T_r \ (r \nmid N)$, $U_r \ (r \mid N)$, and $\langle d \rangle \  (d \in (\mathbb{Z}/N\mathbb{Z})^\times)$. This is equivalent to $\mathbb{T}$ in \cite[\S2]{faltings-jordan}.
For a maximal ideal $\mathfrak{m} \subseteq \mathbb{T}(N)$, write $\mathbb{T}(N)_\mathfrak{m}$ to be the localization of $\mathbb{T}(N)$ at $\mathfrak{m}$.
We also denote by $M_\mathfrak{m}$ the localization of $\mathbb{T}(N)$-module $M$ at $\mathfrak{m}$.

Let $S$ be a finite set of places of $\mathbb{Q}$ containing the primes dividing $pN$, and let $\mathbb{Q}_S$ be the maximal extension of $\mathbb{Q}$ unramified outside $S$.
Thanks to the work of Deligne \cite{deligne-modular-galois} and its subsequent work including \cite[Proposition 11.1]{gross-tameness},  \cite[Proposition 5.1]{inv100}, and \cite[\S2]{faltings-jordan},
there exists the residual Galois representation
$$\overline{\rho} : \mathrm{Gal}(\mathbb{Q}_S/\mathbb{Q}) \to \mathrm{GL}_2(\mathbb{T}(N) / \mathfrak{m}\mathbb{T}(N)) \simeq \mathrm{GL}_2(\mathbb{F})$$
characterized by
$$\mathrm{Tr}(\overline{\rho} (\mathrm{Frob}_q)) = T_q \pmod{ \mathfrak{m}}$$
for all primes $q$ not dividing $Np$.
\begin{defn}
We say that \textbf{$\mathfrak{m}$ is non-Eisenstein} if the corresponding residual representation $\overline{\rho}$ is irreducible.
\end{defn}
By localizing at $\mathfrak{m}$, we obtain Galois representation $V_{k,\mathbb{Z}_p}(Y_1(N))_{\mathfrak{m}} = \varprojlim_n (V_{k,\mathbb{Z}_p}(Y_1(N))/\mathfrak{m}^n )$ over the localized Hecke algebra $\mathbb{T}(N)_\mathfrak{m} = \varprojlim_n (\mathbb{T}(N)/\mathfrak{m}^n)$.
See \cite{carayol-modular-galois-repns} for the description of this Galois representation and also \cite[page 11]{faltings-jordan}).

We will see that $V_{k,\mathbb{Z}_p}(Y_1(N))_{\mathfrak{m}} / \mathfrak{m} = V_{k,\mathbb{Z}_p}(Y_1(N)) / \mathfrak{m} \simeq \overline{\rho}$ (Corollary \ref{cor:mod-p-multiplicity-one}).
In the non-Eisenstein case, it is known that $\mathbb{T}(N)_\mathfrak{m}$ is reduced.
This is essentially due to Wiles \cite[Proposition 2.15]{wiles} (cf. \cite[Proposition 2.4.2]{epw}) and the statement for our (indeed, a more general) setting can be found in \cite[Lemma 5.4.(iii)]{dimitrov-ihara}.
\subsection{$\Sigma_0$-imprimitive zeta elements}
We pin down $\Sigma_0 = S'  := S \setminus \lbrace p, \infty \rbrace$.
Indeed, we have
\begin{equation} \label{eqn:zeta-elements}
\mathbf{z}_{\mathrm{Kato}}(f, i, k- r) = \left( \mathbf{z}^{(p)}_{\gamma_f} \otimes (\zeta_{p^n})^{\otimes k-r}_n \right)^{\omega^i} \in \mathbb{H}^1(j_*T_{f,i}(k-r))
\end{equation}
where $\gamma_f = \gamma^+_f + \gamma^-_f \in T_f$ satisfies $\gamma^{\pm}_f \in T^{\pm}_f$ and $T_f = \mathcal{O}_\pi \gamma^+_f + \mathcal{O}_\pi \gamma^-_f $.
This is an explicit description of $\mathbf{z}_{\mathrm{Kato}}(f, i, k- r)$ (cf. Definition \ref{defn:kato-zeta-elements}).

We define the \textbf{$\Sigma_0$-imprimitive zeta element of $\mathbf{z}_{\mathrm{Kato}}(f, i, k- r)$} by
\begin{align*}
\mathbf{z}^{\Sigma_0}_{\mathrm{Kato}}(f, i, k- r) & \coloneqq \left( \prod_{\ell \in \Sigma_0} \mathcal{E}_\ell(f^*, -i, r)^\iota \right) \cdot \mathbf{z}_{\mathrm{Kato}}(f, i, k- r)
\end{align*}
where $\mathcal{E}_\ell(f^*, -i, r)^\iota$ is the image of $\mathcal{E}_\ell(f^*, -i, r)$ under $\iota : \Lambda_{-i} \to \Lambda_{i}$ in Definition \ref{defn:involution}.
For notational convenience, we write
\[
\xymatrix{
\mu \left( \mathbf{z} \right)  =  \mu \left( \mathbb{H}^1(j_*T_{f,i}(k-r)) /  \mathbf{z} \right)  , & \lambda \left( \mathbf{z} \right)  = \lambda \left( \mathbb{H}^1(j_*T_{f,i}(k-r)) / \mathbf{z} \right)
}
\]
for $\mathbf{z} \in \mathbb{H}^1(j_*T_{f,i}(k-r))$.
Then we have
\begin{align} \label{eqn:iwasawa-invariants-kato-elements}
\begin{split}
\mu \left( \mathbf{z}^{\Sigma_0}_{\mathrm{Kato}}(f, i, k- r) \right) & = \mu \left( \mathbf{z}_{\mathrm{Kato}}(f, i, k- r) \right) , \\
\lambda \left( \mathbf{z}^{\Sigma_0}_{\mathrm{Kato}}(f, i, k- r) \right) & =
\lambda \left( \mathbf{z}_{\mathrm{Kato}}(f, i, k- r) \right)
+ \sum_{\ell \in \Sigma_0}   \lambda \left( \bigoplus_{\eta \mid \ell} \mathrm{H}^2_{\mathrm{Iw}}(\mathbb{Q}_{\infty, \eta}, T_{f, i} (k-r) )  \right)
\end{split}
\end{align}
by Corollary \ref{cor:local-H^2-reversed}.

\subsection{Mod $p$ multiplicity one} \label{subsec:mod-p-multi-one}
We recall the mod $p$ multiplicity one result following Mazur, Wiles, Ribet, Edixhoven, and Faltings--Jordan.
The following form of the mod $p$ multiplicity one is due to Faltings--Jordan \cite[Theorem 2.1]{faltings-jordan}.
We write
$$ V_{k,\mathbb{F}_p}(Y_1(N)) = \mathrm{H}^1_{\mathrm{\acute{e}t}}(Y_1(N)_{\overline{\mathbb{Q}}}, \mathrm{Sym}^{k-2}_{\mathbb{F}_p}(\overline{\mathcal{H}}^1_p)) $$
where $\overline{\mathcal{H}}^1_p = \mathrm{R}^1\varpi_*\mathbb{F}_p$ is the \'{e}tale $\mathbb{F}_p$-sheaf on $Y_1(N)$ and $\varpi : E \to Y_1(N)$ is the universal elliptic curve as in $\S$\ref{subsubsec:construction}.
\begin{thm}[Faltings--Jordan] \label{thm:mod-p-multiplicity-one}
Suppose that  $\mathfrak{m}$ is a non-Eisenstein maximal ideal of $\mathbb{T}(N)$.
If $(N, p) = 1$ and $2 \leq k \leq p-1$, then
\begin{enumerate}
\item $V_{k, \mathbb{F}_p}(Y_1(N)) [\mathfrak{m}]$ is 2-dimensional over $\mathbb{F}$, and
\item $\mathbb{T}(N)_\mathfrak{m}$ is Gorenstein.
\end{enumerate}
\end{thm}
\begin{cor} \label{cor:mod-p-multiplicity-one}
Under the same assumptions in Theorem \ref{thm:mod-p-multiplicity-one}, we have
$$V_{k, \mathbb{Z}_p}(Y_1(N)) \otimes_{\mathbb{T}(N)} \mathbb{T}(N) /  \mathfrak{m}  \simeq  \overline{\rho} .$$
where $\overline{\rho}$ is the residual Galois representation corresponding to $\mathfrak{m}$.
\end{cor}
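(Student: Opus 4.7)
Set $M \coloneqq V_{k,\mathbb{Z}_p}(Y_1(N))$, so the object to identify is $M \otimes_{\mathbb{T}} \mathbb{T}/\mathfrak{m} = M_\mathfrak{m}/\mathfrak{m} M_\mathfrak{m}$; since $p \in \mathfrak{m}$, this also equals the maximal $\mathfrak{m}$-cofixed quotient of $V_{k,\mathbb{F}_p}(Y_1(N))_\mathfrak{m}$. The plan is to first show that this cofiber is two-dimensional over $\mathbb{F}$, and then to identify the resulting two-dimensional $G_{\mathbb{Q}_S}$-representation with $\overline{\rho}$.

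For the dimension count, I would convert the socle statement of Theorem \ref{thm:mod-p-multiplicity-one} into a cofiber statement via Poincar\'{e} duality. The cup-product pairing $\mathrm{Sym}^{k-2}\mathcal{H}^1_p \otimes \mathrm{Sym}^{k-2}\mathcal{H}^1_p \to \det(\mathcal{H}^1_p)^{\otimes(k-2)} \simeq \mathbb{Z}_p(2-k)$, combined with Poincar\'{e} duality for the smooth affine curve $Y_1(N)$, yields a Hecke-equivariant perfect pairing on $M$ (twisted by the Nebentypus $\psi$). After localizing at $\mathfrak{m}$, this identifies $M_\mathfrak{m}$ with its $\mathbb{Z}_p$-linear dual as a $\mathbb{T}_\mathfrak{m}$-module up to a Galois twist; equivalently, $\mathbb{T}_\mathfrak{m}$ is Gorenstein and $M_\mathfrak{m}$ is free of rank two over it. Reducing modulo $p$ and comparing $\mathfrak{m}$-socle with $\mathfrak{m}$-cofiber then gives
\[
\dim_{\mathbb{F}} \bigl( M_\mathfrak{m}/\mathfrak{m} M_\mathfrak{m} \bigr) \; = \; \dim_{\mathbb{F}} \bigl( V_{k,\mathbb{F}_p}(Y_1(N))_\mathfrak{m}[\mathfrak{m}] \bigr) \; = \; 2,
\]
where the last equality is Theorem \ref{thm:mod-p-multiplicity-one}.

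It remains to identify this two-dimensional $\mathbb{F}$-representation of $G_{\mathbb{Q}_S}$. For any prime $\ell \notin S$, compatibility of the Hecke and Galois actions on $M$ (Eichler--Shimura) together with (\ref{eqn:char-poly}) shows that the characteristic polynomial of $\mathrm{Frob}_\ell^{-1}$ acting on $M \otimes_{\mathbb{T}} \mathbb{T}/\mathfrak{m}$ is the mod-$\mathfrak{m}$ reduction of $1 - a_\ell(f) u + \psi(\ell) \ell^{k-1} u^2$, which is also the characteristic polynomial of $\mathrm{Frob}_\ell^{-1}$ on $\overline{\rho}$. Chebotarev density then forces the traces of the two representations to agree on all of $G_{\mathbb{Q}_S}$, and since $\overline{\rho}$ is absolutely irreducible by Assumption \ref{assu:image}, both representations are semisimple with matching trace, yielding $M \otimes_{\mathbb{T}} \mathbb{T}/\mathfrak{m} \simeq \overline{\rho}$.

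The main obstacle is the Poincar\'{e} duality bookkeeping in the first step, where one must correctly track the Nebentypus and Tate twists in order to conclude the Gorenstein property of $\mathbb{T}_\mathfrak{m}$ and the freeness of $M_\mathfrak{m}$ of rank two. In practice the cleanest route is to extract this freeness directly from the arguments underlying Theorem \ref{thm:mod-p-multiplicity-one}, where the relevant self-duality is already built in; once the rank-two freeness is granted, the Galois identification in the last step is essentially formal.
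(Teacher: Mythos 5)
Your overall strategy matches the paper's: use a Hecke-equivariant perfect pairing on $V_{k,\mathbb{Z}_p}(Y_1(N))_{\mathfrak{m}}$ to convert the $\mathfrak{m}$-socle statement of Theorem~\ref{thm:mod-p-multiplicity-one} into the $\mathfrak{m}$-cofiber statement, and then identify the resulting two-dimensional $\mathbb{F}[G_{\mathbb{Q}_S}]$-module with $\overline{\rho}$. The paper simply cites \cite[Corollary~1.6]{diamond-flach-guo} for the integral pairing rather than re-deriving it from Poincar\'e duality, but that is the same ingredient.

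One correction: the claim that self-duality of $M_{\mathfrak{m}}$ is ``equivalently'' the statement that $\mathbb{T}_{\mathfrak{m}}$ is Gorenstein and $M_{\mathfrak{m}}$ is free of rank two is not an equivalence. A perfect $\mathcal{O}$-bilinear Hecke-equivariant pairing gives $M_{\mathfrak{m}} \simeq \mathrm{Hom}_{\mathcal{O}}(M_{\mathfrak{m}},\mathcal{O})$ as $\mathbb{T}_{\mathfrak{m}}$-modules, which is strictly weaker; Gorenstein-plus-free-rank-two would require, in addition, the multiplicity-one input you are about to invoke together with the generic rank. As stated the remark is circular. Fortunately it is not load-bearing: the step you actually run, namely $\dim_{\mathbb{F}}(M_{\mathfrak{m}}/\mathfrak{m} M_{\mathfrak{m}}) = \dim_{\mathbb{F}}\bigl(\mathrm{Hom}_{\mathbb{F}}((M_{\mathfrak{m}}/\pi)[\mathfrak{m}],\mathbb{F})\bigr) = \dim_{\mathbb{F}}(V_{k,\mathbb{F}_p}(Y_1(N))_{\mathfrak{m}}[\mathfrak{m}])$, uses only the self-duality, so you should simply delete the Gorenstein aside.

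For the final identification with $\overline{\rho}$, the Chebotarev argument you outline is correct but more than is needed. Since $T_f$ is constructed in \S\ref{subsubsec:construction} as $V_{k,\mathbb{Z}_p}(Y_1(N)) \otimes_{\mathbb{T}} \mathbb{T}/\wp_f$ and $\mathfrak{m}$ is the maximal ideal above $\wp_f$, the module $V_{k,\mathbb{Z}_p}(Y_1(N)) \otimes_{\mathbb{T}} \mathbb{T}/\mathfrak{m}$ is a quotient of $T_f$ through which reduction mod $\pi$ factors; once both it and $T_f/\pi T_f = \overline{\rho}$ are known to be two-dimensional over $\mathbb{F}$, the surjection is forced to be an isomorphism. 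This is how the paper concludes (``by the construction''), and it avoids invoking Eichler--Shimura and Chebotarev a second time.
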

\begin{rem}
The conventions of the Galois representations in $\S$\ref{subsubsec:construction} and in \cite[$\S$2]{faltings-jordan} are dual to each other.
\end{rem}
\begin{proof}
Due to the argument given in \cite[$\S$2.1]{faltings-jordan} and the existence of integral perfect paring on $V_{k, \mathbb{Z}_p} (Y_1(N))_{\mathfrak{m}}$  as in \cite[Corollary 1.6]{diamond-flach-guo},
the subquotients of a Galois-stable filtration of
$V_{k, \mathbb{Z}_p}(Y_1(N))_{\mathfrak{m}} \otimes_{\mathbb{T}(N)_\mathfrak{m}} \mathbb{T}(N)_\mathfrak{m} /  \mathfrak{m}$
and
the subquotients of a Galois-stable filtration of
$\mathrm{Hom} ( V_{k, \mathbb{F}_p} (Y_1(N))_{\mathfrak{m}}[\mathfrak{m}] , \mathbb{F}(1-k))$
coincide up to order.
Because $\overline{\rho}$ is irreducible, we have
$$V_{k, \mathbb{Z}_p}(Y_1(N))_{\mathfrak{m}} \otimes_{\mathbb{T}(N)_\mathfrak{m}} \mathbb{T}(N)_\mathfrak{m} /  \mathfrak{m} \simeq \mathrm{Hom} ( V_{k, \mathbb{F}_p} (Y_1(N))_{\mathfrak{m}}[\mathfrak{m}] , \mathbb{F}(1-k)) .$$
Thus, we know
$$\mathrm{dim}_{\mathbb{F}} \left( V_{k, \mathbb{F}_p}(Y_1(N)) [\mathfrak{m}] \right) =
\mathrm{dim}_{\mathbb{F}} \left( V_{k, \mathbb{Z}_p}(Y_1(N)) \otimes_{\mathbb{T}(N)} \mathbb{T}(N) /  \mathfrak{m} \right) = 2 .$$
By the construction in $\S$\ref{subsubsec:construction}, it is isomorphic to $\overline{\rho}$.
\end{proof}

\subsection{Congruences between zeta elements: the same level} \label{subsec:same-level}
\subsubsection{}
Let $f = \sum_{n \geq1} a_n(f)q^n \in S_k(\Gamma_1(N_f))$ and $g = \sum_{n \geq1} a_n(g)q^n \in S_k(\Gamma_1(N_g))$ be newforms of weight $k \geq 2$. Assume that $f$ and $g$ are congruent modulo $\pi$ in the sense of the isomorphism between the residual representations $\overline{\rho}_f \simeq \overline{\rho}_g$.
Let $\Sigma_0$ be the set of finite places dividing $N_f$ and $N_g$.
We put
\begin{equation} \label{eqn:level_N_Sigma_0}
N^{\Sigma_0} = N(\overline{\rho}) \cdot \prod_{\ell} \ell \cdot \prod_{q} q^2
\end{equation}
where $\ell$ runs over the primes in $\Sigma_0$ such that
$\ell$ divides $N(\overline{\rho})$ exactly  once or $\ell \equiv 1 \pmod{p}$ and $\mathrm{ord}_\ell (N(\overline{\rho})) = \mathrm{ord}_\ell (N(\mathrm{det}(\overline{\rho})))$
and $q$ runs over the primes in $\Sigma_0$ not dividing $N(\overline{\rho})$.
Then
\[
\xymatrix{
f^{\Sigma_0} = \sum_{n \geq 1} a_n(f^{\Sigma_0})q^n \coloneqq  \sum_{(n, N^{\Sigma_0})=1} a_n(f)q^n  , & g^{\Sigma_0} = \sum_{n \geq 1} a_n(g^{\Sigma_0})q^n \coloneqq  \sum_{(n, N^{\Sigma_0})=1} a_n(g)q^n
}
\]
are eigenforms in $S_k(\Gamma_1(N^{\Sigma_0}))$. Then both $N_f$ and $N_g$ divides $N^{\Sigma_0}$ thanks to the following proposition.
\begin{prop}
For any newform $f \in S_k(\overline{\rho})$ of level $N_f$, $N_f$ divides $N^{\Sigma_0}$ of the form (\ref{eqn:level_N_Sigma_0}) where $\Sigma_0$ is the set of finite places dividing $N_f$.
\end{prop}
\begin{proof}
Since $\overline{\rho}$ is irreducible and $2 \leq k \leq p-1$, we can apply \cite[Theorem A]{diamond-taylor-non-optimal}.
Then $N_f$ must be of the following form
$$N_f = N(\overline{\rho}) \cdot \prod_\ell \ell^{\alpha(\ell)}.$$
For each prime $\ell$ with $\alpha(\ell) > 0$, one of the following statements hold:
\begin{enumerate}
\item $\ell \nmid N(\overline{\rho})$, $\ell \cdot \left( \mathrm{tr}\left( \overline{\rho}(\mathrm{Frob}_\ell) \right) \right)^2 = (1+\ell)^2 \cdot   \mathrm{det}\left( \overline{\rho}(\mathrm{Frob}_\ell) \right)$ in $\overline{\mathbb{F}}$, and $\alpha(\ell) = 1$.
\item $\ell \equiv -1 \pmod{p}$ and one of the following holds:
\begin{enumerate}
\item $\ell \nmid N(\overline{\rho})$, $\mathrm{tr}\left( \overline{\rho}(\mathrm{Frob}_\ell) \right) = 0$ in $\overline{\mathbb{F}}$,  and $\alpha(\ell) = 2$;
\item $\ell \Vert N(\overline{\rho})$, $\mathrm{det}(\overline{\rho})$ is unramified at $\ell$, and $\alpha(\ell)= 1$.
\end{enumerate}
\item $\ell \equiv 1 \pmod{p}$ and one of the following holds:
\begin{enumerate}
\item $\ell \nmid N(\overline{\rho})$, and $\alpha(\ell) = 2$;
\item $\ell^2 \nmid N(\overline{\rho})$ or the power of $\ell$ dividing $N(\overline{\rho})$ is the same as the power dividing the conductor of $\mathrm{det}(\overline{\rho})$, and $\alpha(\ell)= 1$.
\end{enumerate}
\end{enumerate}
Therefore, 
\begin{itemize}
\item if $\ell \nmid N(\overline{\rho})$, then $1 \leq \alpha(\ell) \leq 2$, and
\item if $\ell \Vert N(\overline{\rho})$ or $\ell \equiv 1 \pmod{p}$ and $\mathrm{ord}_\ell (N(\overline{\rho})) = \mathrm{ord}_\ell (N(\mathrm{det}(\overline{\rho})))$, then $\alpha(\ell) = 1$.
\end{itemize}
The conclusion follows.
\end{proof}
\subsubsection{}
We use the same notation $\mathbb{T}(N^{\Sigma_0})$ for its base change to $\mathcal{O}_\pi$.
Let $\mathfrak{m}^{\Sigma_0} \subseteq \mathbb{T}(N^{\Sigma_0})$ be the non-Eisenstein maximal ideal
generated by $\pi$, $T_q - a_q(f^{\Sigma_0})$ for all primes $q$ not dividing $N^{\Sigma_0}$, and $U_r$ for all primes $r$ dividing $N^{\Sigma_0}$.
Then $\mathfrak{m}^{\Sigma_0}$ contains both $\wp_{f^{\Sigma_0}}$ and $\wp_{g^{\Sigma_0}}$.

\begin{thm} \label{thm:mod-p-multi-one-vatsal}
If $\mathfrak{m}^{\Sigma_0}$ is non-Eisenstein,  $(N^{\Sigma_0}, p) = 1$, and $2 \leq k \leq p-1$, then
\begin{align*}
\mathrm{H}^1(\Gamma_1(N^{\Sigma_0}), \mathrm{Sym}^{k-2}(\mathcal{O}^2_\pi) )^{\pm}_{\mathfrak{m}^{\Sigma_0}}
& \simeq \mathrm{H}^1(Y_1(N^{\Sigma_0})(\mathbb{C}), \mathrm{Sym}^{k-2}(\mathcal{O}^2_\pi) )^{\pm}_{\mathfrak{m}^{\Sigma_0}} \\
& \simeq \mathrm{Hom}_{\mathcal{O}_\pi} \left( \mathbb{T}(N^{\Sigma_0})_{\mathfrak{m}^{\Sigma_0}} , \mathcal{O}_\pi \right) \\
& \simeq \mathbb{T}(N^{\Sigma_0})_{\mathfrak{m}^{\Sigma_0}} .
\end{align*}
\end{thm}
\begin{proof}
It follows from Theorem \ref{thm:mod-p-multiplicity-one}. See \cite[Theorem 1.13]{vatsal-cong}.
\end{proof}
By Theorem \ref{thm:mod-p-multi-one-vatsal}, the Galois module
$$T_{\mathfrak{m}^{\Sigma_0}} = \mathrm{H}^1(X_1(N^{\Sigma_0}), \mathrm{Sym}^{k-2}(\mathcal{O}^2_\pi))_{\mathfrak{m}^{\Sigma_0}} = V_{k,\mathbb{Z}_p}(Y_1(N^{\Sigma_0}))_{\mathfrak{m}^{\Sigma_0}} \otimes_{\mathbb{Z}_p} \mathcal{O}_\pi$$
(in the sense of $\S$4.1) is free of rank two over the localized Hecke algebra $\mathbb{T}(N^{\Sigma_0})_{\mathfrak{m}^{\Sigma_0}}$.
It is known that there is no distinction between the use of $X_1$ and $Y_1$ above after the localization at a non-Eisenstein maximal ideal.
We are able to write
$$T_{\mathfrak{m}^{\Sigma_0}} =  \mathbb{T}(N^{\Sigma_0})_{\mathfrak{m}^{\Sigma_0}} \delta^+_{\mathfrak{m}^{\Sigma_0}} + \mathbb{T}(N^{\Sigma_0})_{\mathfrak{m}^{\Sigma_0}} \delta^-_{\mathfrak{m}^{\Sigma_0}}$$
where $\delta^\pm_{\mathfrak{m}^{\Sigma_0}}$ is a $\mathbb{T}(N^{\Sigma_0})_{\mathfrak{m}^{\Sigma_0}}$-generator of $\mathrm{H}^1(X_1(N^{\Sigma_0}), \mathrm{Sym}^{k-2}(\mathcal{O}^2_\pi))^{\pm}_{\mathfrak{m}^{\Sigma_0}}$ such that $\iota (\delta^{\pm}_{\mathfrak{m}^{\Sigma_0}} ) = \pm \delta^{\pm}_{\mathfrak{m}^{\Sigma_0}}$ and $\iota$ is the involution induced by the complex conjugation.

For any eigenform $f^{\Sigma_0} \in S_k(\Gamma_1(N^{\Sigma_0}), \mathcal{O}_\pi )_{\mathfrak{m}^{\Sigma_0}}$, we also have the following identifications and the natural quotient map
\begin{align} \label{eqn:quotient-integral-galois-modules}
\begin{split}
\mathrm{H}^1(\Gamma_1(N^{\Sigma_0}), \mathrm{Sym}^{k-2}(\mathcal{O}^2_\pi) )^{\pm}_{\mathfrak{m}^{\Sigma_0}}
& \simeq \mathrm{H}^1(Y_1(N^{\Sigma_0})(\mathbb{C}), \mathrm{Sym}^{k-2}(\mathcal{O}^2_\pi) )^{\pm}_{\mathfrak{m}^{\Sigma_0}} \\
& \simeq V_{k,\mathbb{Z}_p}(Y_1(N^{\Sigma_0}))^{\pm}_{\mathfrak{m}^{\Sigma_0}} \\
& \to V_{k,\mathbb{Z}_p}(Y_1(N^{\Sigma_0}))^{\pm}_{\mathfrak{m}^{\Sigma_0}} \otimes \mathbb{T}(N^{\Sigma_0})_{\mathfrak{m}^{\Sigma_0}} / \wp_{f^{\Sigma_0}} \\
& = T^{\pm}_{f^{\Sigma_0}} .
\end{split}
\end{align}
By Chebotarev density theorem, we have $T_f \simeq T_{f^{\Sigma_0}}$ as Galois modules.

Under the assumptions in Theorem \ref{thm:mod-p-multi-one-vatsal}, we have an integral Eichler--Shimura isomorphism
$$S_k(\Gamma_1(N^{\Sigma_0}), \mathcal{O}_\pi )_{\mathfrak{m}^{\Sigma_0}} \simeq
\mathrm{H}^1(\Gamma_1(N^{\Sigma_0}), \mathrm{Sym}^{k-2}(\mathcal{O}^2_\pi) )^{\pm}_{\mathfrak{m}^{\Sigma_0}}$$
under which \emph{normalized} eigenforms $f^{\Sigma_0}$ and $g^{\Sigma_0}$ maps to the canonical cohomology classes $\delta^{\pm}_{f^{\Sigma_0}}$ and $\delta^{\pm}_{g^{\Sigma_0}}$
in group cohomology $\mathrm{H}^1(\Gamma_1(N^{\Sigma_0}), \mathrm{Sym}^{k-2}(\mathcal{O}^2_\pi) )^{\pm}_{\mathfrak{m}^{\Sigma_0}}$, respectively, and these cohomology classes yield the canonical periods. See  \cite[$\S$1.3]{vatsal-cong}  and \cite[Definition 3.4]{vatsal-integralperiods-2013}.
This assignment works with eigenforms which are not necessarily newforms as explained in \cite[$\S$3.2]{vatsal-integralperiods-2013}. We denote by the same notation the image of $\delta^{\pm}_{f^{\Sigma_0}}$ in $T^{\pm}_{f^{\Sigma_0}}$.
The same rule applies to $g^{\Sigma_0}$.

Since $f^{\Sigma_0}$ is a normalized eigenform and the Eichler--Shimura isomorphism is integral, the corresponding cohomology class $\delta^{\pm}_{f^{\Sigma_0}}$ does not vanish modulo $\pi$.
Therefore, $\delta^{\pm}_{f^{\Sigma_0}}$ generates $T^{\pm}_{f^{\Sigma_0}}$ so that $\delta^{\pm}_{f^{\Sigma_0}}$ can play the role of $\gamma^{\pm}_{f^{\Sigma_0}}$ in (\ref{eqn:zeta-elements}).
As before, the same rule applies to $g^{\Sigma_0}$.

Since $f^{\Sigma_0} \equiv g^{\Sigma_0} \pmod{\pi}$, we have congruences between $\delta^{\pm}_{f^{\Sigma_0}} \in T^{\pm}_{f^{\Sigma_0}}$ and $\delta^{\pm}_{g^{\Sigma_0}} \in T^{\pm}_{g^{\Sigma_0}}$ in the residual representation as in \cite[(4), Page 402]{vatsal-cong} and \cite[Proposition 3.6]{vatsal-integralperiods-2013}, i.e.
\begin{equation} \label{eqn:congruences-same-levels}
\delta^{\pm}_{f^{\Sigma_0}} \equiv u^{\pm} \cdot \delta^{\pm}_{g^{\Sigma_0}} \pmod{\pi}
\end{equation}
where $u^{\pm} \in \mathcal{O}^\times_\pi$ in the residual representation following the isomorphisms
\begin{align*}
\overline{\rho}^{\pm} & \simeq  V_{k, \mathbb{Z}_p}(Y_1(N^{\Sigma_0}))^{\pm}_{\mathfrak{m}^{\Sigma_0}} \otimes_{\mathbb{T}(N^{\Sigma_0})_{\mathfrak{m}^{\Sigma_0}}} \mathbb{T}(N^{\Sigma_0})_{\mathfrak{m}^{\Sigma_0}} /  \wp_{f^{\Sigma_0}} \otimes \mathcal{O}_\pi/  \pi  \\
&  \simeq   V_{k, \mathbb{Z}_p}(Y_1(N^{\Sigma_0}))^{\pm}_{\mathfrak{m}^{\Sigma_0}} \otimes_{\mathbb{T}(N^{\Sigma_0})_{\mathfrak{m}^{\Sigma_0}}} \mathbb{T}(N^{\Sigma_0})_{\mathfrak{m}^{\Sigma_0}} / \wp_{g^{\Sigma_0}} \otimes \mathcal{O}_\pi / \pi
\end{align*}
which follow from Corollary \ref{cor:mod-p-multiplicity-one}.

\subsubsection{}
We briefly review how to construct zeta elements associated to $\delta_{f^{\Sigma_0}} = \delta^+_{f^{\Sigma_0}} + \delta^-_{f^{\Sigma_0}} \in T_{f^{\Sigma_0}}$.
Following the explicit construction in \cite[$\S$5.5]{kato-euler-systems}, we have the cocycle
$$\delta_{1,N^{\Sigma_0}}(k ,j, \alpha)^\pm \in V_{k,\mathbb{Q}}(Y_1(N^{\Sigma_0}))^{\pm} \coloneqq  \mathrm{H}^1(Y_1(N^{\Sigma_0})(\mathbb{C}), \mathrm{Sym}^{k-2}(\mathbb{Q}^2) )^{\pm} .$$
Write $\delta(f^{\Sigma_0},j, \alpha)^\pm \in V^{\pm}_{f^{\Sigma_0}}$ to be the image of
$\delta_{1,N^{\Sigma_0}}(k ,j, \alpha)^\pm$ in $V^{\pm}_{f^{\Sigma_0}}$ as in \cite[$\S$6.3]{kato-euler-systems}.
Following \cite[$\S$13.9]{kato-euler-systems}, we are able to write
\begin{align} \label{eqn:gamma-delta}
\begin{split}
\delta_{f^{\Sigma_0}} & = \delta^+_{f^{\Sigma_0}} +
 \delta^-_{f^{\Sigma_0}}  \\
 & = b_1(f^{\Sigma_0})  \cdot \delta(f^{\Sigma_0},j_1, \alpha_1)^+ +
 b_2(f^{\Sigma_0})  \cdot \delta(f^{\Sigma_0},j_2, \alpha_2)^- , \\
\delta_{g^{\Sigma_0}} & = \delta^+_{g^{\Sigma_0}} + \delta^-_{g^{\Sigma_0}} \\
 & = b_1(g^{\Sigma_0})  \cdot \delta(g^{\Sigma_0},j'_1, \alpha'_1)^+ +
 b_2(g^{\Sigma_0})  \cdot \delta(g^{\Sigma_0},j'_2, \alpha'_2)^-
\end{split}
\end{align}
in $T_{f^{\Sigma_0}}$ and $T_{g^{\Sigma_0}}$, respectively, where
$b_1(f^{\Sigma_0}) ,b_2(f^{\Sigma_0}) , b_1(g^{\Sigma_0})$, and $b_2(g^{\Sigma_0})$ lie in $F_\pi$.
Here, $\alpha_i , \alpha'_i \in \mathrm{SL}_2(\mathbb{Z})$ and $j_i, j'_i$ are integers such that $1 \leq j_i, j'_i \leq k-1$ ($i=1, 2$).
See also \cite[Theorem 13.6]{kato-euler-systems} for the integrality of $\delta(f^{\Sigma_0},j_i, \alpha_i)^\pm$ and $\delta(g^{\Sigma_0},j'_i, \alpha'_i)^\pm$ for $i = 1, 2$.

As in \cite[$\S$13.9]{kato-euler-systems}, the zeta element
 $\mathbf{z}_{\mathrm{Kato}}(f^{\Sigma_0}, k-r)$ associated to
 $\delta_{f^{\Sigma_0}} = b_1(f^{\Sigma_0})  \cdot \delta(f^{\Sigma_0},j_1, \alpha_1)^+ +
 b_2(f^{\Sigma_0})  \cdot \delta(f^{\Sigma_0},j_2, \alpha_2)^- $
 is defined by
\begin{align} \label{eqn:zeta-morphism-single-explicit}
\begin{split}
&  \mathbf{z}_{\mathrm{Kato}}(f^{\Sigma_0}, k-r) \\
= & \left( \left\lbrace \mu(c,d,j_1)^{-1} \cdot b_1(f^{\Sigma_0}) \cdot \left( {}_{c,d}z^{(p)}_{p^n} (f^{\Sigma_0} , k, j_1, \alpha_1, pN^{\Sigma_0}) \right)_{n \geq 1}   \right\rbrace^{-} \right. \\
& +
\left. \left\lbrace  \mu(c,d,j_2)^{-1} \cdot  b_2(f^{\Sigma_0}) \cdot \left( {}_{c,d}z^{(p)}_{p^n} (f^{\Sigma_0} , k, j_2, \alpha_2, pN^{\Sigma_0}) \right)_{n \geq 1} \right\rbrace^{+} \right) \otimes \left( \zeta_{p^n} \right)^{\otimes k-r}_{n \geq 1}
\end{split}
\end{align}
where
$\mu(c,d,j) = \left( c^2 - c^{k+1-j} \cdot \sigma_c \right) \cdot \left( d^2 - d^{j+1} \cdot \sigma_d \right)  \in \Lambda$ and
$\left( {}_{c,d}z^{(p)}_{p^n} (f^{\Sigma_0} , k, j, \alpha, pN^{\Sigma_0}) \right)_{n \geq 1}$ is the integral zeta element appeared in \cite[(8.1.3)]{kato-euler-systems} with $c, d, k, j \in \mathbb{Z}$ and $\alpha \in \mathrm{SL}_2(\mathbb{Z})$ satisfying certain conditions in \cite[(5.2.1)--(5.2.3) with the $\xi \in \mathrm{SL}_2(\mathbb{Z})$ case in $\S$5.2]{kato-euler-systems}.
Of course, $\mathbf{z}_{\mathrm{Kato}}(g^{\Sigma_0}, k-r)$ is defined in the exactly same way.
Then
$$\mathbf{z}_{\mathrm{Kato}}(f^{\Sigma_0}, k-r) \in \mathbf{H}^1(j_*  T_{f^{\Sigma_0}}(k-r)) \otimes Q(\Lambda)$$ \emph{a priori} where $Q(\Lambda)$ is the total quotient ring of $\Lambda$.
In particular, the assignment $\delta_{f^{\Sigma_0}} \mapsto \mathbf{z}_{\mathrm{Kato}}(f^{\Sigma_0}, k-r)$ in (\ref{eqn:zeta-morphism-single-explicit}) is independent of all the choices and it is canonical \cite[Theorem 12.5.(1)]{kato-euler-systems}. 
All the choices are made only for the construction of $\delta_{f^{\Sigma_0}}$ in (\ref{eqn:gamma-delta}).
Thanks to \cite[Theorem 12.5.(1)]{kato-euler-systems} again, this assignment extends to a homomorphism (zeta morphism) with the smaller target
\begin{equation} \label{eqn:zeta-morphism-single-modular-form-rational}
\mathbf{z}_{\mathrm{Kato}}  \otimes (\zeta_{p^n})^{\otimes (k-r)}_n : V_{f^{\Sigma_0}} \to \mathbf{H}^1(j_*  V_{f^{\Sigma_0}}(k-r)) \subseteq \mathbf{H}^1(j_*  T_{f^{\Sigma_0}}(k-r)) \otimes Q(\Lambda)
\end{equation}
so that $\mathbf{z}_{\mathrm{Kato}}(f^{\Sigma_0}, k-r) \in \mathbf{H}^1(j_*  V_{f^{\Sigma_0}}(k-r))$.
Another explanation of the zeta morphism can be found in \cite[$\S$3.3; in particular, (30)]{nakamura-kato-deformation}. Under Assumption \ref{assu:image},
\cite[Theorem 12.5.(4)]{kato-euler-systems} implies that we have the integral zeta morphism
\begin{equation} \label{eqn:zeta-morphism-single-modular-form-integral}
\mathbf{z}_{\mathrm{Kato}}  \otimes (\zeta_{p^n})^{\otimes (k-r)}_n : T_{f^{\Sigma_0}} \to \mathbf{H}^1(j_*  T_{f^{\Sigma_0}}(k-r))
\end{equation}
so that $\mathbf{z}_{\mathrm{Kato}}(f^{\Sigma_0}, k-r) \in \mathbf{H}^1(j_*  T_{f^{\Sigma_0}}(k-r))$.

Combining with the projection to the $\omega^i$-isotypic component,
we have the zeta morphism
$$T_{f^{\Sigma_0}} \to \mathbf{H}^1(j_*  T_{f^{\Sigma_0}}(k-r)) \to \mathbb{H}^1(j_*  T_{f^{\Sigma_0},i}(k-r))$$
The mod $\pi$ reduction of this zeta morphism yields the following commutative diagram
\begin{equation} \label{eqn:commutative-diagram-reduction}
\begin{split}
\xymatrix@C=1em{
T_{f^{\Sigma_0}} \ar[r] \ar[dd] & \mathbf{H}^1(j_*  T_{f^{\Sigma_0}}(k-r)) \ar[r] \ar[d] & \mathbb{H}^1(j_*  T_{f^{\Sigma_0},i}(k-r)) \ar[d]  \\
& \mathrm{H}^1_{\mathrm{Iw}}(\mathbb{Q}_\Sigma/\mathbb{Q}(\zeta_{p^\infty}), T_{f^{\Sigma_0}}(k-r)) \ar[r] \ar[d] & \mathrm{H}^1_{\mathrm{Iw}}(\mathbb{Q}_\Sigma/\mathbb{Q}_{\infty}, T_{f^{\Sigma_0},i}(k-r))  \ar[d] \\
\overline{\rho} \simeq T_{f^{\Sigma_0}} / \pi \ar[r] \ar[dr] & \mathrm{H}^1_{\mathrm{Iw}}(\mathbb{Q}_\Sigma/\mathbb{Q}(\zeta_{p^\infty}), T_{f^{\Sigma_0}}(k-r)) / \pi \ar[r] \ar@{_{(}->}[d]  & \mathrm{H}^1_{\mathrm{Iw}}(\mathbb{Q}_\Sigma/\mathbb{Q}_{\infty}, T_{f^{\Sigma_0},i}(k-r)) / \pi \ar@{_{(}->}[d] \\
& \mathrm{H}^1_{\mathrm{Iw}}(\mathbb{Q}_\Sigma/\mathbb{Q}(\zeta_{p^\infty}), \overline{\rho}_{f^{\Sigma_0}}(k-r)) \ar[r] & \mathrm{H}^1_{\mathrm{Iw}}(\mathbb{Q}_\Sigma/\mathbb{Q}_{\infty}, \overline{\rho}_{f^{\Sigma_0},i}(k-r))
}
\end{split}
\end{equation}
where $\Sigma = \Sigma_0 \cup \lbrace p, \infty \rbrace$.
\begin{rem} \label{rem:zeta-elements-general-abelian}
For an integer $m \geq 1$, we can also define the zeta element over $\mathbb{Q}(\zeta_{mp^\infty})$ associated to $f^{\Sigma_0}$
$$ \mathbf{z}_{\mathrm{Kato}, \mathbb{Q}(\zeta_m)}(f^{\Sigma_0}, k-r) \in \mathrm{H}^1_{\mathrm{Iw}}(\mathbb{Q}_\Sigma/\mathbb{Q}(\zeta_{mp^\infty}), T_{f^{\Sigma_0}}(k-r))$$
by using the same formula as in (\ref{eqn:zeta-morphism-single-explicit}) but with
\[
\xymatrix{
\left( {}_{c,d}z^{(p)}_{mp^n} (f^{\Sigma_0} , k, j_1, \alpha_1, pN^{\Sigma_0}) \right)_{n \geq 1} , & \left( {}_{c,d}z^{(p)}_{mp^n} (f^{\Sigma_0} , k, j_2, \alpha_2, pN^{\Sigma_0}) \right)_{n \geq 1} .
}
\]
This assignment also yields a similar zeta morphism from $T_{f^{\Sigma_0}}$ to $\mathrm{H}^1_{\mathrm{Iw}}(\mathbb{Q}_\Sigma/\mathbb{Q}(\zeta_{mp^\infty}), T_{f^{\Sigma_0}}(k-r))$.
It is not difficult to check the integrality of this zeta element (and morphism) by using the $\Lambda$-freeness of
$\mathrm{H}^1_{\mathrm{Iw}}(\mathbb{Q}_\Sigma/\mathbb{Q}(\zeta_{mp^\infty}), T_{f^{\Sigma_0}}(k-r))$ under the large image assumption (Assumption \ref{assu:image}). See \cite[$\S$13.14]{kato-euler-systems} and \cite{kim-kato} for example.
\end{rem}
\subsubsection{}
For notational convenience, we use $f$ and $N$ instead of $f^{\Sigma_0}$ and $N^{\Sigma_0}$.
We first review the construction of families of Kato's zeta elements (zeta morphisms).
Let $T_{\mathfrak{m}} = \mathrm{H}^1(X_1(N), \mathrm{Sym}^{k-2}(\mathcal{O}^2_\pi))_{\mathfrak{m}}$
 and $V_\mathfrak{m} = T_{\mathfrak{m}} \otimes \mathbb{Q}_p$.
Let $\delta_{1, N}(k, r, \xi)_{\mathfrak{m}}  \in \mathrm{H}^1(X_1(N), \mathrm{Sym}^{k-2}(\mathbb{Q}^2))_{\mathfrak{m}}$ be the $\mathfrak{m}$-component of $\delta_{1, N}(k, r, \xi)$ where $\xi \in \mathrm{SL}_2(\mathbb{Z})$ \cite[$\S$5.5]{kato-euler-systems}.
By using the idea of \cite[Lemma 13.10.(2)]{kato-euler-systems} (cf. \cite[$\S$3]{fukaya-kato-sharificonj}, \cite[$\S$3.8]{nakamura-kato-deformation}), consider the assignment
\begin{align} \label{eqn:zeta-morphism-families-explicit}
\begin{split}
\delta_{1, N}(k, r, \xi)_{\mathfrak{m}} \mapsto & \ ( \left( \left( c^2 - c^{k+1 -r} \cdot \sigma_c \right) \cdot  \left( d^2 - d^{ r+1} \cdot \sigma_d \right)  \right)^{-1}  \\
& \cdot {}_{c,d} z^{(p)}_{1,N, p^n} (k, k, r , \xi, pN)_{\mathfrak{m}} \otimes (\zeta_{p^n})^{\otimes (k-r)} )_n .
\end{split} 
\end{align}
Note that $\left( c^2 - c^{k+1 -r} \cdot \sigma_c \right) \cdot  \left( d^2 - d^{ r+1} \cdot \sigma_d \right)$ is independent of any choice coming from the Hecke modules above.
Since
$\delta^+_{\mathfrak{m}}$ and $\delta^-_{\mathfrak{m}}$ generate $V_{\mathfrak{m}}$ over $\mathbb{T}(N)_{\mathfrak{m}} \otimes \mathbb{Q}_p$, $\delta_{1, N}(k, r, \xi)^{\pm}_{\mathfrak{m}}$ is a $\mathbb{T}(N)_{\mathfrak{m}} \otimes \mathbb{Q}_p$-multiple of $\delta^{\pm}_{\mathfrak{m}}$.
In particular, all the relations among $\left\lbrace \delta_{1, N}(k, r, \xi)^{\pm}_{\mathfrak{m}} \right\rbrace_{(r, \xi)}$ in $V_{\mathfrak{m}}$ reduce to the $\mathbb{T}(N)_{\mathfrak{m}} \otimes \mathbb{Q}_p$-linear relations with $\delta^{\pm}_{\mathfrak{m}}$.
Thus, the $\mathbb{T}(N)_{\mathfrak{m}}$-linear extension of (\ref{eqn:zeta-morphism-families-explicit}) yields the zeta morphism parametrizing the families of Kato's zeta elements
\begin{equation} \label{eqn:zeta-morphism-families}
\mathbf{z}_{\mathrm{Kato}}  \otimes (\zeta_{p^n})^{\otimes (k-r)}_n: V_{\mathfrak{m}}  \to \mathbf{H}^1(j_*V_{\mathfrak{m}} (k-r)) \otimes_{\Lambda} Q(\Lambda \otimes \mathbb{T}(N)_{\mathfrak{m}} \otimes \mathbb{Q}_p) 
\end{equation}
and it is also independent of any choice. The Hecke-equivariant property of (\ref{eqn:zeta-morphism-families}) can also be found in \cite[Corollary 3.10]{nakamura-kato-deformation}.
We use the same notation for the zeta morphism for a single modular form.
Comparing with \cite[Lemma 13.10.(2)]{kato-euler-systems} more precisely, the Euler factors $\prod_{\ell \vert N} \left( 1 - T'(\ell) \cdot \ell^{-k} \cdot \sigma^{-1}_{\ell} \right)$ is missing in our formulation.
Here, ${}_{c,d} z^{(p)}_{1,N, p^n} (k, k, r , \xi, pN)_{\mathfrak{m}}$ is the $\mathfrak{m}$-component of ${}_{c,d} z^{(p)}_{1,N, p^n} (k, k, r , \xi, pN)$ \cite[$\S$8.1.2]{kato-euler-systems}.

We first show that the image of the map (\ref{eqn:zeta-morphism-families}) actually lies in
$\mathbf{H}^1(j_*V_{\mathfrak{m}} (k-r))$.
Recall that we have the decomposition of the rational Hecke algebra into the product of Hecke fields
$\mathbb{T}(N)_{\mathfrak{m}} \otimes \mathbb{Q}_p \simeq \prod_{f'} F(f')_{\pi}$
where $F(f')_{\pi} = \mathbb{Q}_p (\iota_p(a_n(f')):n)$ and $f'$ runs over the set of congruent eigenforms of fixed level $N$. This decomposition follows from the reduced property of $\mathbb{T}(N)_{\mathfrak{m}}$ as mentioned in $\S$\ref{subsec:hecke-algebras}.
Since $V_{\mathfrak{m}}$ is a free module over $\mathbb{T}(N)_{\mathfrak{m}} \otimes \mathbb{Q}_p$ of rank two,
we also have decomposition
$V_{\mathfrak{m}} \simeq \prod_{f'} V_{f'}$
as Galois--Hecke modules.
Since the zeta element of a single modular form is defined by the image of the zeta element of the space of modular forms \cite[$\S$8.11]{kato-euler-systems},
we have the commutative diagram
\[
\xymatrix{
V_{\mathfrak{m}} \ar[rrr]^-{\mathbf{z}_{\mathrm{Kato}}  \otimes (\zeta_{p^n})^{\otimes (k-r)}_n}_-{(\ref{eqn:zeta-morphism-families})} \ar[d]^-{\otimes \mathbb{T}(N)_{\mathfrak{m}} / \wp_{f'}} & & & \mathbf{H}^1(j_*V_{\mathfrak{m}} (k-r)) \otimes_{\Lambda} Q(\Lambda \otimes \mathbb{T}(N)_{\mathfrak{m}} \otimes \mathbb{Q}_p) \ar[d]^-{(*)} \\
V_{f'} \ar[rrr]^-{\mathbf{z}_{\mathrm{Kato}}  \otimes (\zeta_{p^n})^{\otimes (k-r)}_n}_-{(\ref{eqn:zeta-morphism-single-modular-form-rational})} & & & \mathbf{H}^1(j_* V_{f'} (k-r))  \subseteq \mathbf{H}^1(j_*V_{\mathfrak{m}} (k-r)) \otimes_{\Lambda} Q(\Lambda)
}
\]
for such a congruent eigenform $f'$ 
where the map $(*)$ is also induced from the map $\otimes \mathbb{T}(N)_{\mathfrak{m}} / \wp_{f'}$.

By using this compatibility with decomposition $\mathbb{T}(N)_{\mathfrak{m}} \otimes \mathbb{Q}_p \simeq \prod_{f'} F(f')_{\pi}$, the zeta morphism (\ref{eqn:zeta-morphism-families}) also decomposes into the product of the zeta morphisms of congruent eigenforms
\begin{equation} \label{eqn:zeta-morphism-families-reduced}
\mathbf{z}_{\mathrm{Kato}}  \otimes (\zeta_{p^n})^{\otimes (k-r)}_n: V_{\mathfrak{m}} \simeq \prod_{f'} V_{f'} \to  \prod_{f'} \mathbf{H}^1(j_* V_{f'} (k-r))  \simeq \mathbf{H}^1(j_*V_{\mathfrak{m}} (k-r)),
\end{equation}
so the target of (\ref{eqn:zeta-morphism-families}) becomes $\mathbf{H}^1(j_*V_{\mathfrak{m}} (k-r))$ thanks to (\ref{eqn:zeta-morphism-single-modular-form-rational}).

\subsubsection{}
We continue the simplified notation.
We integrally refine (\ref{eqn:zeta-morphism-families-reduced}) closely following the strategy of \cite[the proof of Proposition 3.15 (pp. 233)]{nakamura-kato-deformation}. This argument generalizes \cite[\S13.14]{kato-euler-systems}.
More precisely, we reduce the integrality of the families of the zeta morphism
\begin{equation} \label{eqn:zeta-morphism-families-reduced-integral}
\mathbf{z}_{\mathrm{Kato}}  \otimes (\zeta_{p^n})^{\otimes (k-r)}_n: T_{\mathfrak{m}} \to \mathbf{H}^1(j_*T_{\mathfrak{m}} (k-r))
\end{equation}
to the rational one (\ref{eqn:zeta-morphism-families-reduced}).

Recall that $\Lambda_0 = \mathcal{O}_\pi \llbracket \Gamma \rrbracket \simeq \mathcal{O}_\pi \llbracket \mathrm{Gal}(\mathbb{Q}(\zeta_{p^\infty})/\mathbb{Q}(\zeta_{p})) \rrbracket$.
For any $c \in \mathbb{Z}$ with $c \equiv 1 \pmod{p}$, we can regard $\sigma_c \in \Lambda$ as an element in $\Lambda_0$.
Due to the absolute irreducibility of $\overline{\rho}$, we have
$$\mathrm{H}^0(\mathbb{Q}, T_{\mathfrak{m}} (k-r) \widehat{\otimes}_{\mathbb{Z}_p} \Lambda \otimes_{\Lambda_0} \mathbb{F} ) = 0 $$
where $\widehat{\otimes}_{\mathbb{Z}_p}$ means the completed tensor product.
This vanishing implies that
$\mathbf{H}^1(j_*T_{\mathfrak{m}} (k-r))$ is finite free over $\Lambda_0$ by the same argument as in
\cite[the proof of Theorem 12.4.(3)]{kato-euler-systems}.
From the explicit formula of the zeta morphism (\ref{eqn:zeta-morphism-families-explicit}),
we have inclusion
$$\mathfrak{d} \cdot \left( \mathbf{z}_{\mathrm{Kato}}  \otimes (\zeta_{p^n})^{\otimes (k-r)}_n \right) \left( T_{\mathfrak{m}} \right)
\subseteq \mathbf{H}^1(j_*T_{\mathfrak{m}} (k-r)) $$
for
$$\mathfrak{d} = \prod^{k-1}_{r=1} \left( c^2 - c^{k+1 -r} \cdot \sigma_c \right) \cdot  \left( d^2 - d^{ r+1} \cdot \sigma_d \right) \in \Lambda$$
where $c$ and $d$ are any integers such that
$c \equiv 1 \pmod{Np}$, $d \equiv 1 \pmod{Np}$, and $(cd, 6)=1$ (cf. \cite[(8.1.2)]{kato-euler-systems}).
Then $\mathfrak{d}$ is actually an element of $\Lambda_0$ and is not divisible by $p$ in $\Lambda_0$.
Therefore, the integrality (\ref{eqn:zeta-morphism-families-reduced-integral}) follows from the $p$-inverted one (\ref{eqn:zeta-morphism-families-reduced}).

\subsubsection{}
Since the zeta element for a single modular form is defined by the image of the zeta element for the space of modular forms \cite[$\S$8.11]{kato-euler-systems} under the quotient map by $\wp_{f'}$, we have the compatibility of zeta morphisms
\[
\xymatrix{
T_{\mathfrak{m}}  \ar[rr]^-{(\ref{eqn:zeta-morphism-families-reduced-integral})}
\ar[d]^-{ \prod_{f'} (-) \otimes \mathbb{T}(N)_{\mathfrak{m}} / \wp_{f'} }
& &  \mathbf{H}^1(j_*T_{\mathfrak{m}} (k-r))  \ar[d]^-{ \prod_{f'} (-) \otimes \mathbb{T}(N)_{\mathfrak{m}} / \wp_{f'} } \ar[r]^-{(**)} &  \prod_{i} \mathbb{H}^1(j_*T_{\mathfrak{m}, i} (k-r)) \ar[d]^-{ \prod_{f'} (-) \otimes \mathbb{T}(N)_{\mathfrak{m}} / \wp_{f'} }
 \\
 \prod_{f'} T_{f'}   \ar[rr]^-{\prod_{f'}(\ref{eqn:zeta-morphism-single-modular-form-integral})_{f'}} \ar[d]^-{ \prod_{f'} (-) \otimes \mathcal{O}_{\pi}/\pi } &  &  \prod_{f'} \mathbf{H}^1(j_*T_{f'} (k-r))  \ar[d]^-{ \prod_{f'} (-) \otimes \mathcal{O}_{\pi}/\pi } \ar[r]^-{(**)} &
\prod_{f'} \prod_{i} \mathbb{H}^1(j_*T_{f',i} (k-r)) \ar[d]^-{ \prod_{f'} (-) \otimes \mathcal{O}_{\pi}/\pi }
\\
  \prod_{f'} \overline{\rho}   \ar[rr]^-{\prod_{f'}(*)_{f'}}  &  &  \prod_{f'} \mathrm{H}^1_{\mathrm{Iw}}(\mathbb{Q}_\Sigma/\mathbb{Q}(\zeta_{p^\infty}), \overline{\rho}(k-r)) \ar[r]^-{(**)} &
\prod_{f'} \prod_{i}  \mathrm{H}^1_{\mathrm{Iw}}(\mathbb{Q}_\Sigma/\mathbb{Q}_{\infty}, \overline{\rho}_i(k-r))
}
\]
where $(-) \otimes \mathbb{T}(N)_{\mathfrak{m}}/ \wp_{f'}$ is viewed as the projection to $f'$,
$(-) \otimes \mathcal{O}_{\pi}/\pi$ is the mod $\pi$ reduction of each projection,
the $(*)_{f'}$ is the bottom map in the above diagram (\ref{eqn:commutative-diagram-reduction}), and
the $(**)$'s are the maps induced from the $\omega^i$-isotypic decomposition.
Since the $\xi = a(A)$ case can be done similarly, we omit it.
For a $\mathbb{T}(N)_{\mathfrak{m}}$-module $M$, we have
\begin{equation} \label{eqn:mod-m-reduction-hecke-modules}
 M \otimes_{\mathbb{T}(N)_{\mathfrak{m}}} \mathbb{T}(N)_{\mathfrak{m}}/ \wp_{f'} \otimes_{\mathcal{O}_{\pi}} \mathcal{O}_{\pi}/\pi  = M \otimes_{\mathbb{T}(N)_{\mathfrak{m}}} \mathbb{T}(N)_{\mathfrak{m}}/ \mathfrak{m} .
\end{equation}
In particular,  the map  $(*)_{f'}$ is as the same map as $(\ref{eqn:zeta-morphism-families-reduced-integral}) \otimes_{\mathbb{T}(N)_{\mathfrak{m}}} \mathbb{T}(N)_{\mathfrak{m}}/ \mathfrak{m}$ for every $f'$, so $(*)_{f'}$ is independent of $f'$.
Under the above diagram, the image of $\delta^{\pm}_{\mathfrak{m}}$ is computed as follows:
\[
\xymatrix{
\delta^{\pm}_{\mathfrak{m}} \ar@{|->}[rr]^-{(\ref{eqn:zeta-morphism-families-reduced-integral})}
\ar@{|->}[d]^-{ \prod_{f'} (-) \otimes \mathbb{T}(N)_{\mathfrak{m}} / \wp_{f'} }
& &  \left( \mathbf{z}_{\mathrm{Kato}}  \otimes (\zeta_{p^n})^{\otimes (k-r)}_n \right) \left( \delta^{\pm}_{\mathfrak{m}} \right)  \ar@{|->}[d]^-{ \prod_{f'} (-) \otimes \mathbb{T}(N)_{\mathfrak{m}} / \wp_{f'} } \ar@{|->}[r]^-{(**)} &  \left( \left( \mathbf{z}_{\mathrm{Kato}}  \otimes (\zeta_{p^n})^{\otimes (k-r)}_n \right) \left( \delta^{\pm}_{\mathfrak{m}} \right)_i \right)_i \ar@{|->}[d]^-{ \prod_{f'} (-) \otimes \mathbb{T}(N)_{\mathfrak{m}} / \wp_{f'} }
 \\
 \left( \delta^{\pm}_{f'} \right)_{f'}   \ar@{|->}[rr]^-{\prod_{f'}(\ref{eqn:zeta-morphism-single-modular-form-integral})_{f'}} \ar@{|->}[d]^-{ \prod_{f'} (-) \otimes \mathcal{O}_{\pi}/\pi } &  &  \left( \mathbf{z}_{\mathrm{Kato}}(f', k-r)^{\pm} \right)_{f'}   \ar@{|->}[d]^-{ \prod_{f'} (-) \otimes \mathcal{O}_{\pi}/\pi } \ar@{|->}[r]^-{(**)} &
 \left( \mathbf{z}_{\mathrm{Kato}}(f', i, k-r)^{\pm} \right)_{f',i}  \ar@{|->}[d]^-{ \prod_{f'} (-) \otimes \mathcal{O}_{\pi}/\pi }
\\
  \prod_{f'} \overline{\delta^{\pm}_{f'}}  \ar@{|->}[rr]^-{\prod_{f'}(*)_{f'}}  &  &  \left( \overline{\mathbf{z}_{\mathrm{Kato}}(f', k-r)^{\pm}} \right)_{f'}  \ar@{|->}[r]^-{(**)} &
 \left( \overline{\mathbf{z}_{\mathrm{Kato}}(f', i, k-r)^{\pm}} \right)_{f',i}  .
}
\]
Due to (\ref{eqn:mod-m-reduction-hecke-modules}),
we have
$\overline{\delta^{\pm}_{f'}} = \delta^{\pm}_{\mathfrak{m}} \pmod{\mathfrak{m}}$
in $\overline{\rho} =  T_{f'}/\pi = T_{\mathfrak{m}} /\mathfrak{m}$,
so $\overline{\delta^{\pm}_{f'}}$ is also independent of $f'$.
Therefore, the image of $\overline{\delta^{\pm}_{f'}}$ under $(*)_{f'}$ is also independent of $f'$.
To sum up, we obtain the following statement.

\begin{prop} \label{prop:independent_of_f}
Suppose that $\mathfrak{m}^{\Sigma_0}$ is non-Eisenstein.
The image of $\delta^{\pm}_{\mathfrak{m}^{\Sigma_0}} \pmod{\mathfrak{m}^{\Sigma_0}}$
under the composition of the maps
$$T_{\mathfrak{m}^{\Sigma_0}}/\mathfrak{m}^{\Sigma_0}  \simeq  T_{f^{\Sigma_0}} / \pi \simeq \overline{\rho} \to \mathrm{H}^1_{\mathrm{Iw}}(\mathbb{Q}_\Sigma/\mathbb{Q}(\zeta_{p^\infty}), \overline{\rho}(k-r)) \to \mathrm{H}^1_{\mathrm{Iw}}(\mathbb{Q}_\Sigma/\mathbb{Q}_{\infty}, \overline{\rho}_{i}(k-r))$$
in the above diagram (\ref{eqn:commutative-diagram-reduction})
is independent of the choice of $f^{\Sigma_0}$ in $S_k(\Gamma_1(N^{\Sigma_0}), \mathcal{O}_\pi )_{\mathfrak{m}^{\Sigma_0}} $.
\end{prop}

Due to Proposition \ref{prop:independent_of_f},  the congruence between cocycles
$$\delta^{\pm}_{f^{\Sigma_0}} \equiv u^{\pm} \cdot \delta^{\pm}_{g^{\Sigma_0}} \pmod{\pi} $$
in $\overline{\rho}$
yields the congruence between zeta elements
\begin{equation} \label{eqn:congruence-zeta-elements-same-levels}
\overline{\mathbf{z}_{\mathrm{Kato}}(f^{\Sigma_0},i, k-r)} = \overline{u}^{\pm} \cdot \overline{\mathbf{z}_{\mathrm{Kato}}(g^{\Sigma_0},i, k-r)}
\end{equation}
in $\mathrm{H}^1_{\mathrm{Iw}}(\mathbb{Q}_S/\mathbb{Q}_{\infty}, \overline{\rho}_{f, i}(k-r)) $
where $\overline{u}^{\pm}$ is the mod $\pi$ reduction of $u^{\pm} \in \mathcal{O}^\times_\pi$ and its sign coincides with that of $ (-1)^i$.


\subsection{Congruences between zeta elements: the different levels} \label{subsec:different-levels}
We discuss the congruence between two eigenforms of different levels.

\subsubsection{}
We have $V_{f} \simeq V_{f^{\Sigma_0}}$ as Galois representations, and we write
$T_{f} = \mathcal{O}_\pi \delta^+_f + \mathcal{O}_\pi  \delta^-_f$ and
$T_{f^{\Sigma_0}} = \mathcal{O}_\pi  \delta^+_{f^{\Sigma_0}} + \mathcal{O}_\pi  \delta^-_{f^{\Sigma_0}}$.
Let
\begin{align*}
\mathbf{z}_{\mathrm{Kato}}(f, k-r) & \in \mathbf{H}^1(j_*  T_{f}(k-r)), \\
\mathbf{z}_{\mathrm{Kato}}(f^{\Sigma_0}, k-r) & \in \mathbf{H}^1(j_*  T_{f^{\Sigma_0}}(k-r))
\end{align*}
be Kato's zeta elements associated to $\delta_f$ and  $\delta_{f^{\Sigma_0}}$ under
the integral zeta morphisms (\ref{eqn:zeta-morphism-single-modular-form-integral}) for $f$ and $f^{\Sigma_0}$, respectively.
We write
$$\mathbf{z}^{\Sigma_0}_{\mathrm{Kato}}(f, k-r) = \left( \prod_{\ell \in \Sigma_0} \mathcal{E}_\ell(f^*, r)^\iota \right) \cdot \mathbf{z}_{\mathrm{Kato}}(f, k-r).$$

We first reduce the comparison between $\mathbf{z}_{\mathrm{Kato}}(f^{\Sigma_0}, k-r)$ and $\mathbf{z}^{\Sigma_0}_{\mathrm{Kato}}(f, k-r)$ to the comparison between $\delta_f$ and  $\delta_{f^{\Sigma_0}}$.
The latter comparison enhances the isomorphism $V_{f} \simeq V_{f^{\Sigma_0}}$ integrally
and follows from Ihara's lemma.

\subsubsection{}
We first study two Kato's zeta elements
$\mathbf{z}_{\mathrm{Kato}}(f^{\Sigma_0}, k-r)$ and $\mathbf{z}^{\Sigma_0}_{\mathrm{Kato}}(f, k-r)$
in the $p$-inverted Iwasawa cohomology $\mathbf{H}^1(j_*  V_{f}(k-r)) \simeq \mathbf{H}^1(j_*  V_{f^{\Sigma_0}}(k-r)) \simeq \Lambda \otimes \mathbb{Q}_p$ (Theorem \ref{thm:kato-iwasawa-cohomologies}.(1)).
Since $p$ is inverted, the $p$-power direction of the Euler system relation and the interpolation formula of Kato's zeta element for every finite order character $\chi$ on $\mathrm{Gal}(\mathbb{Q}(\zeta_{p^\infty})/\mathbb{Q})$ detects the zeta element uniquely in the $p$-inverted Iwasawa cohomology.

Let $m$ be the square-free product of the primes in $\Sigma_0$.
Kato's explicit reciprocity law \cite[Theorem 12.5.(1)]{kato-euler-systems}\footnote{The referee pointed out that the same explicit reciprocity law argument applies to $f^{\Sigma_0}$ although newforms are only mentioned in Kato's paper.} implies that
\begin{align} \label{eqn:kato-explicit-reciprocity-law}
\begin{split}
\sum_{a \in \mathrm{Gal}(\mathbb{Q}(\zeta_{p^n})/\mathbb{Q})} \mathrm{exp}^* \circ \mathrm{loc}_p \left( \sigma_a \left( \mathbf{z}_{\mathrm{Kato}}(f^{\Sigma_0}, k-r) \right) \right) \cdot \chi(a)
& = (2 \pi i )^{k-r-1} \cdot L^{(mp)}(f^*,\chi, r) \cdot \delta^{\pm}_{f^{\Sigma_0}} , \\
\sum_{a \in \mathrm{Gal}(\mathbb{Q}(\zeta_{p^n})/\mathbb{Q})} \mathrm{exp}^* \circ \mathrm{loc}_p \left( \sigma_a \left( \mathbf{z}^{\Sigma_0}_{\mathrm{Kato}}(f, k-r) \right) \right)\cdot \chi(a)
& = (2 \pi i )^{k-r-1} \cdot L^{(mp)}(f^*,\chi, r) \cdot \delta^{\pm}_{f} .
\end{split}
\end{align}
for every character $\chi : \mathrm{Gal}(\mathbb{Q}(\zeta_{p^n})/\mathbb{Q}) \to \mathbb{C}^\times$
where $L^{(mp)}(f^*,\chi, r)$ is the $mp$-imprimitive $L$-value of $f^*$ at $s= r$ twisted by $\chi$, and  $\pm$ is the sign of $(-1)^{k-r-1} \cdot \chi(-1)$.
Here, ``$mp$-imprimitive" means that the Euler factors at primes dividing $mp$ are removed.

The explicit formula (\ref{eqn:kato-explicit-reciprocity-law}) implies that for every $0 \leq i \leq p-2$,
the zeta elements $\mathbf{z}_{\mathrm{Kato}}(f^{\Sigma_0},  i , k-r)$ and $\mathbf{z}^{\Sigma_0}_{\mathrm{Kato}}(f, i, k-r)$ differ only by an element of $F^\times_\pi$ in $\mathbb{H}^1(j_*  V_{f, i}(k-r))$.
In particular, the difference is exactly the ratio between
$\delta^{\pm}_{f^{\Sigma_0}}$ and $\delta^{\pm}_{f}$ in $V^{\pm}_f$
where $\pm$ is the sign of $(-1)^i$.

We now prove the ratio is a $p$-adic unit and this is where Ihara's lemma is needed.
\subsubsection{}
We follow the convention of \cite[$\S$1.7.3]{diamond-flach-guo}.
For positive integers $m$ dividing $N^{\Sigma_0}/N$, we define the morphism
$$\epsilon_m : S_{k}(\Gamma_1(N), \psi) \to  S_{k}(\Gamma_1(N^{\Sigma_0}), \psi)$$
defined by the double coset operator
$$m^{-1} \left[ U_0(N^{\Sigma_0})  \begin{pmatrix}
m^{-1} & 0 \\ 0 & 1
\end{pmatrix} U_0(N)  \right]$$
where $U_0(N)$ is the subgroup of the adelic points of $\mathrm{GL}_2$ corresponding to the $\Gamma_0(N)$-level structure.

Let $\phi_m$ be the endomorphism of $S_{k}(\Gamma_1(N), \psi)$ defined by
$\phi_1 = 1$, $\phi_\ell =  - T_\ell$, $\phi_{p^2} = \psi(p)p^{k-1}$, and $\phi_{m_1 \cdot m_2} = \phi_{m_1} \cdot \phi_{m_2}$ if $(m_1 , m_2)=1$ where $\ell$ is a prime dividing $N^{\Sigma_0}/N$.

We define
$$\gamma^{\Sigma_0} = \sum_m \epsilon_m \cdot \phi_m : S_{k}(\Gamma_1(N), \psi) \to  S_{k}(\Gamma_1(N^{\Sigma_0}), \psi) $$
where $m$ runs over the divisors of $N^{\Sigma_0}/N$.
Then we have $\gamma^{\Sigma_0}( f ) = f^{\Sigma_0}$ as in \cite[Proposition 1.4.(a)]{diamond-flach-guo}.

\begin{thm}[Ihara's lemma] \label{thm:ihara-lemma}
Assume that $2 \leq k \leq p-1$ and $p$ does not divide $N$.
If $\overline{\rho}$ is irreducible, then the map $\gamma^{\Sigma_0}$ induces an isomorphism
\[
\xymatrix{
\gamma^{\Sigma_0} : V_{k, \mathbb{Z}_p} (Y_1(N))_{\mathfrak{m}}[\wp_f] \ar[r]^-{\simeq} & V_{k, \mathbb{Z}_p} (Y_1(N^{\Sigma_0}))_{\mathfrak{m}^{\Sigma_0}}[\wp_{f^{\Sigma_0}}]
}
\]
where
 $\mathfrak{m}$
and  $\mathfrak{m}^{\Sigma_0}$ are maximal ideals of
$\mathbb{T}(N)$ and
$\mathbb{T}(N^{\Sigma_0})$
 corresponding to $f$ and $f^{\Sigma_0}$, respectively, and
 $\wp_f$ and $\wp_{f^{\Sigma_0}}$ are the height one prime ideals of $\mathbb{T}(N)$ and
$\mathbb{T}(N^{\Sigma_0})$
 corresponding to $f$ and $f^{\Sigma_0}$, respectively.
\end{thm}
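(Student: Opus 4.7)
The plan is to deduce Ihara's lemma from mod $p$ multiplicity one at both levels $N$ and $N^\Sigma$, combined with the classical Ihara--Ribet non-vanishing of degeneracy maps on the $\overline{\rho}$-isotypic part. First I would apply Corollary \ref{cor:mod-p-multiplicity-one} at both levels to identify the residues (modulo $\pi$) of $V_{k, \mathbb{Z}_p}(Y_1(N))_\mathfrak{m} \otimes_\mathbb{T} \mathbb{T}/\mathfrak{m}$ and $V_{k, \mathbb{Z}_p}(Y_1(N^\Sigma))_{\mathfrak{m}^\Sigma} \otimes \mathbb{T}/\mathfrak{m}^\Sigma$, each with $\overline{\rho}$ as a $G_\mathbb{Q}$-module. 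Using that the localized Hecke algebras are Gorenstein in our setting (a consequence of mod $p$ multiplicity one in this Fontaine--Laffaille range), I would conclude that $V_{k, \mathbb{Z}_p}(Y_1(N))_\mathfrak{m}[\wp_f]$ and $V_{k, \mathbb{Z}_p}(Y_1(N^\Sigma))_{\mathfrak{m}^\Sigma}[\wp_{f^\Sigma}]$ are rank-two $\mathcal{O}_\pi$-modules whose residues are both $\overline{\rho}$.

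Next, I would verify that $\gamma^\Sigma = \sum_m \epsilon_m \phi_m$ descends to a well-defined $\mathcal{O}_\pi$-linear and Galois-equivariant map between these two modules. The Hecke-equivariance of the double coset operators $\epsilon_m$, together with the identity $\gamma^\Sigma(f) = f^\Sigma$ from \cite[Proposition 1.4.(a)]{diamond-flach-guo}, shows that $\gamma^\Sigma$ sends the $\wp_f$-component of the source into the $\wp_{f^\Sigma}$-component of the target. By Nakayama's lemma, showing this map is an isomorphism reduces to showing its reduction modulo $\pi$, a Galois-equivariant map $\overline{\rho} \to \overline{\rho}$, is an isomorphism; by Schur's lemma applied to the absolutely irreducible representation $\overline{\rho}$ (Assumption \ref{assu:image}), it is enough to show this reduction is non-zero.

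The main obstacle is precisely this non-vanishing, which is the genuine content of Ihara's lemma and cannot be obtained by formal considerations. I would invoke the classical theorem of Ihara--Ribet: the kernel of the total degeneracy map at level $N^\Sigma$ has trivial $\mathfrak{m}^\Sigma$-localization. The proof proceeds geometrically, via the analysis of the supersingular divisor in the special fiber of the modular curve at each prime $\ell \mid N^\Sigma/N$, combined with the irreducibility of $\overline{\rho}$ to rule out Eisenstein contributions. The extension from weight two to our range $2 \leq k \leq p-1$ is handled via Fontaine--Laffaille theory in the style of Diamond--Taylor. Iterating over primes in $\Sigma$ and reassembling $\epsilon_m \phi_m$ into $\gamma^\Sigma$ then yields the required non-vanishing, and the isomorphism of the theorem follows.
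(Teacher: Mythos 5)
The paper's own proof of this theorem is a one-line citation to \cite[Proposition 1.4.(c)]{diamond-flach-guo}, so there is no internal argument to compare against; your sketch is a reasonable reconstruction of the standard argument underlying that cited result. The reduction via mod $p$ multiplicity one (Corollary \ref{cor:mod-p-multiplicity-one}), Gorensteinness of the localized Hecke algebra (so the $\wp$-torsion submodules are free of rank two over $\mathcal{O}_\pi$ with residue $\overline{\rho}$), and Nakayama combined with Schur's lemma to a single mod $\pi$ non-vanishing is the correct framework, and the appeal to the Ihara--Ribet non-vanishing, extended to $2 \leq k \leq p-1$ via Fontaine--Laffaille in the style of Diamond, is indeed the genuine content. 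The one step you leave implicit with ``iterating over primes in $\Sigma$ and reassembling'' is the passage from the mod $\pi$ injectivity of the \emph{tuple} of degeneracy maps after localization at $\mathfrak{m}^\Sigma$ to the non-vanishing of the single combination $\gamma^\Sigma = \sum_m \epsilon_m \phi_m$; this does follow because $\phi_1 = 1$, so the diagonal map $v \mapsto (\phi_m v)_m$ has the identity as one coordinate and is therefore injective mod $\pi$, and composing with the injective (localized, mod $\pi$) tuple map $\sum_m \epsilon_m$ forces $\overline{\gamma^\Sigma}$ to be nonzero --- but it deserves an explicit sentence. You should also note that the primes $\ell$ with $\ell \mid N$ are not handled by the supersingular-divisor analysis (which pertains to $\ell \nmid N$) and require the separate level-raising argument at primes already dividing the level.
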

\begin{proof}
See \cite[Proposition 1.4.(c)]{diamond-flach-guo}.
\end{proof}
\begin{cor} \label{cor:ihara-lemma}
Under the same assumptions in Theorem \ref{thm:ihara-lemma}, we have an isomorphism
\[
\xymatrix{
\gamma^{\Sigma_0} : T_f = V_{k, \mathbb{Z}_p} (Y_1(N))_{\mathfrak{m}} \otimes \mathbb{T}(N)_{\mathfrak{m}} / \wp_f \ar[r]^-{\simeq} & V_{k, \mathbb{Z}_p} (Y_1(N^{\Sigma_0}))_{\mathfrak{m}^{\Sigma_0}} \otimes \mathbb{T}(N^{\Sigma_0})_{\mathfrak{m}^{\Sigma_0}} /\wp_{f^{\Sigma_0}} = T_{f^{\Sigma_0}}
}
\]
sending $\delta^{\pm}_f$ to $v^{\pm} \cdot \delta^{\pm}_{f^{\Sigma_0}}$
where $v^{\pm} \in \mathcal{O}^\times_\pi$.
\end{cor}
\begin{proof}
It follows from the combination of Theorem \ref{thm:ihara-lemma} and the existence of integral perfect parings on
 $V_{k, \mathbb{Z}_p} (Y_1(N))_{\mathfrak{m}}$ and $V_{k, \mathbb{Z}_p} (Y_1(N^{\Sigma_0}))_{\mathfrak{m}^{\Sigma_0}}$ as in \cite[Corollary 1.6]{diamond-flach-guo}. More specifically,
\begin{align*}
V_{k, \mathbb{Z}_p} (Y_1(N))_{\mathfrak{m}} \otimes \mathbb{T}(N)_{\mathfrak{m}} / \wp_f
& \simeq
\mathrm{Hom} ( V_{k, \mathbb{Z}_p} (Y_1(N))_{\mathfrak{m}}[\wp_f] , \mathcal{O}_\pi(1-k)) & \textrm{\cite[Corollary 1.6]{diamond-flach-guo}} \\
& \simeq
\mathrm{Hom} (  V_{k, \mathbb{Z}_p} (Y_1(N^{\Sigma_0}))_{\mathfrak{m}^{\Sigma_0}}[\wp_{f^{\Sigma_0}}] , \mathcal{O}_\pi(1-k))
& \textrm{Theorem \ref{thm:ihara-lemma}} \\
& \simeq
V_{k, \mathbb{Z}_p} (Y_1(N^{\Sigma_0}))_{\mathfrak{m}^{\Sigma_0}}  \otimes \mathbb{T}(N^{\Sigma_0})_{\mathfrak{m}^{\Sigma_0}} /\wp_{f^{\Sigma_0}} & \textrm{\cite[Corollary 1.6]{diamond-flach-guo}} .
\end{align*}
\end{proof}

By combining (\ref{eqn:kato-explicit-reciprocity-law}) with Corollary \ref{cor:ihara-lemma}, we have congruence between zeta elements
\begin{equation} \label{eqn:congruence-zeta-elements-different-levels}
\overline{\mathbf{z}^{\Sigma_0}_{\mathrm{Kato}}(f, i, k-r)} = \overline{v}^{\pm} \cdot \overline{\mathbf{z}_{\mathrm{Kato}}(f^{\Sigma_0}, i, k-r)}
\end{equation}
in $\mathrm{H}^1_{\mathrm{Iw}}(\mathbb{Q}_{\Sigma} / \mathbb{Q}(\zeta_{p^\infty}), \overline{\rho}_{f}(k-r))$
where $\overline{v}^{\pm} \in \mathbb{F}^\times_\pi$ and the sign of $v^{\pm}$ coincides with that of $(-1)^i$.

\subsection{Putting it all together}
We have the following congruences
\begin{align} \label{eqn:compare-zeta-elements}
\begin{split}
  \overline{ \mathbf{z}^{\Sigma_0}_{\mathrm{Kato}}(f, i,k-r)  }  & = \overline{u}^{\pm}_1 \cdot \overline{ \mathbf{z}_{\mathrm{Kato}}(f^{\Sigma_0}, i,k-r) }  \\
& = \overline{u}^{\pm}_2 \cdot \overline{ \mathbf{z}_{\mathrm{Kato}}(g^{\Sigma_0}, i,k-r) } \\
&  = \overline{u}^{\pm}_3 \cdot \overline{ \mathbf{z}^{\Sigma_0}_{\mathrm{Kato}}(g, i,k-r) }
\end{split}
\end{align}
in $\mathrm{H}^1_{\mathrm{Iw}}(\mathbb{Q}_{\Sigma}/\mathbb{Q}(\zeta_{p^\infty}), \overline{\rho}(k-r) ) $
where $\overline{u}^{\pm}_1$, $\overline{u}^{\pm}_2$, and $\overline{u}^{\pm}_3$ lie in $\mathbb{F}^\times_\pi$.
The first and the third equalities follow from (\ref{eqn:congruence-zeta-elements-different-levels}), and
the second equality follows from (\ref{eqn:congruence-zeta-elements-same-levels}).

Thus, the following consequence is immediate.
\begin{thm} \label{thm:congruences-kato-elements}
If $\Sigma_0$ contains the primes dividing $N_f$ and $N_g$, then
 $\overline{\mathbf{z}^{\Sigma_0}_{\mathrm{Kato}}(f, i, k- r)}$ and $\overline{\mathbf{z}^{\Sigma_0}_{\mathrm{Kato}}(g, i, k- r)}$
coincide in $\mathrm{H}^1_{\mathrm{Iw}}(\mathbb{Q}_{\Sigma}/\mathbb{Q}_\infty, \overline{\rho}_{f,i}(k-r) )$.
In particular, we have
\begin{align} \label{eqn:index-zeta-elements}
\begin{split}
& \left[ \mathrm{H}^1_{\mathrm{Iw}}(\mathbb{Q}_{\Sigma}/\mathbb{Q}_\infty, \overline{\rho}_{f,i}(k-r) ) : \Lambda_i / \pi_f \Lambda_i \cdot \overline{\mathbf{z}^{\Sigma_0}_{\mathrm{Kato}}(f, i, k- r)} \right] \\
= \ &
\left[ \mathrm{H}^1_{\mathrm{Iw}}(\mathbb{Q}_{\Sigma}/\mathbb{Q}_\infty, \overline{\rho}_{g,i}(k-r) ) : \Lambda_i / \pi_g \Lambda_i \cdot \overline{\mathbf{z}^{\Sigma_0}_{\mathrm{Kato}}(g, i, k- r)} \right]
.
\end{split}
\end{align}
\end{thm}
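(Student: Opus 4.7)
The plan is to assemble what the preceding subsections have established into a single statement at the level of Iwasawa cohomology, and then deduce the index identity as a formal consequence.

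First, I would combine the diagrams of $\S$\ref{subsec:easy-reduction} with the chain in (\ref{eqn:compare-zeta-elements}) at each finite layer $p^n$. Lemma \ref{lem:reduction-of-zeta-elements}, applied to the pair $(f^{\Sigma}, g^{\Sigma})$ of common level $N^{\Sigma}$, shows that $\overline{{}_{c,d}z^{(p)}_{p^n}(f^{\Sigma}, r, r', \xi, pN^{\Sigma})}$ and $\overline{{}_{c,d}z^{(p)}_{p^n}(g^{\Sigma}, r, r', \xi, pN^{\Sigma})}$ are both the image of the common level-$N^{\Sigma}$ class ${}_{c,d}z^{(p)}_{1,N^{\Sigma},p^n}(k, r, r', \xi, pN^{\Sigma})$ reduced modulo $\mathfrak{m}_{\overline{\rho}}$. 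The diagram (\ref{eqn:zeta-elements-different-levels}), whose bottom row is the identity and whose non-vacuousness is supplied by Proposition \ref{prop:period-comparison}, then identifies these common reductions with $\overline{\prod_{\ell \in \Sigma} E_\ell(f^*, p^n, r) \cdot {}_{c,d}z^{(p)}_{p^n}(f, r, r', \xi, pN)}$ and its $g$-analogue, as spelled out in (\ref{eqn:compare-zeta-elements}).

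Second, I would pass to the inverse limit over $n$ using the transition maps that define $\mathbf{H}^1(j_* T_f(k-r))$, obtaining an equality in $\mathbf{H}^1(j_* \overline{\rho}(k-r))$ between the reductions of the Euler-factor-twisted limits of the zeta elements $\mathbf{z}^{(p)}_{\gamma_f}\otimes(\zeta_{p^n}^{\otimes k-r})_n$ and $\mathbf{z}^{(p)}_{\gamma_g}\otimes(\zeta_{p^n}^{\otimes k-r})_n$, using the isomorphisms $\overline{\rho}_f \simeq \overline{\rho} \simeq \overline{\rho}_g$ to identify the targets. Applying the $\omega^i$-isotypic projector, and invoking (\ref{eqn:euler-factor-iota}) to identify the limit of $\prod_{\ell \in \Sigma} E_\ell(f^*, p^n, r)$ on the $\omega^i$-component with $\prod_{\ell \in \Sigma} \mathcal{E}_\ell(f^*, -i, r)^{\iota}$, which is precisely the factor used in the definition of $\mathbf{z}^{\Sigma}_{\mathrm{Kato}}(f, i, k-r)$, I recover the equality
\[
\overline{\mathbf{z}^{\Sigma}_{\mathrm{Kato}}(f, i, k-r)} = \overline{\mathbf{z}^{\Sigma}_{\mathrm{Kato}}(g, i, k-r)}
\]
inside $\mathbb{H}^1(j_* \overline{\rho}_{f,i}(k-r)) = \mathbb{H}^1(j_* \overline{\rho}_{g,i}(k-r))$.

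For the index identity (\ref{eqn:index-zeta-elements}), $\Lambda_i/\pi_f \Lambda_i$ and $\Lambda_i/\pi_g \Lambda_i$ both canonically identify with $\mathbb{F}[\Delta]\llbracket \Gamma\rrbracket$ restricted to its $\omega^i$-component, where $\mathbb{F}$ is the common residue field of $\overline{\rho}$ coming from Assumption \ref{assu:image}. Under this identification the cyclic submodules generated by the two reduced $\Sigma$-imprimitive zeta elements coincide by the displayed equality, so their indices in the ambient $\mathbb{H}^1(j_* \overline{\rho}_i(k-r))$ agree, giving (\ref{eqn:index-zeta-elements}).

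The main obstacle is a bookkeeping one: tracking that the Euler factors $E_\ell(f^*, p^n, r)$ appearing at finite level in (\ref{eqn:zeta-elements-different-levels}) assemble in the inverse limit, after projection onto the $\omega^i$-component, into exactly the Iwasawa-theoretic Euler factor $\mathcal{E}_\ell(f^*, -i, r)^{\iota}$ built into the definition of $\mathbf{z}^{\Sigma}_{\mathrm{Kato}}(f, i, k-r)$. Once this compatibility is made explicit, the theorem is a formal consequence of mod $p$ multiplicity one (Corollary \ref{cor:mod-p-multiplicity-one}) and Ihara's lemma (Corollary \ref{cor:ihara-lemma}) as already packaged in Lemma \ref{lem:reduction-of-zeta-elements} and Proposition \ref{prop:period-comparison}.
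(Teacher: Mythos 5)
Your proposal follows the paper's own argument: assemble the congruence chain (\ref{eqn:compare-zeta-elements}) at each finite layer, pass to the inverse limit via Remark \ref{rem:reduction-zeta-elements}, project to the $\omega^i$-component while identifying the assembled Euler factors with $\prod_{\ell\in\Sigma}\mathcal{E}_\ell(f^*,-i,r)^\iota$, and deduce (\ref{eqn:index-zeta-elements}) from the resulting equality of cyclic submodules. One small attribution slip worth noting: the identity $\overline{{}_{c,d}z^{(p)}_{p^n}(f^\Sigma,\dots)}=\overline{{}_{c,d}z^{(p)}_{p^n}(g^\Sigma,\dots)}$ is supplied by the commutative diagram comparing $f^\Sigma$ and $g^\Sigma$ at common level $N^\Sigma$ together with Corollary \ref{cor:mod-p-multiplicity-one}, whereas Lemma \ref{lem:reduction-of-zeta-elements} concerns the period coefficients $b_1,b_2$ and is what (via Remark \ref{rem:reduction-zeta-elements}) lets one pass from agreement of the ${}_{c,d}z^{(p)}$-classes to agreement of $\overline{\mathbf{z}_{\mathrm{Kato}}}$; since you cite both ingredients, this is a labeling issue rather than a gap.
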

\begin{rem}
If one of (\ref{eqn:index-zeta-elements}) is finite (``$\mu=0$"), then the other is also finite.
In this case, the index (\ref{eqn:index-zeta-elements}) contains the information of the $\lambda$-invariant of
$\mathbb{H}^1(j_* T_{f,i}(k-r)) / \mathbf{z}^{\Sigma_0}_{\mathrm{Kato}}(f, i, k- r)$ and the size of  $\mathrm{H}^2_{\mathrm{Iw}}(\mathbb{Q}_{\Sigma}/\mathbb{Q}_{\infty},T_{f,i}(k-r) )[\pi]$ simultaneously; thus, it seems difficult to obtain the formula on $\lambda$-invariants under congruences as in the literature without having the non-existence of finite Iwasawa submodule of $\mathrm{H}^2_{\mathrm{Iw}}(\mathbb{Q}_{\Sigma}/\mathbb{Q}_{\infty},T_{f,i}(k-r) )[\pi]$.
\end{rem}
The following corollary proves Theorem \ref{thm:main-theorem}.(1). The statement is independent of the choice of $\Sigma_0$.
\begin{cor} \label{cor:mu-invariants-vanish-in-families}
If $\overline{\mathbf{z}_{\mathrm{Kato}}(f_0, i, k- r)}$ is non-zero for some $f_0 \in S_k(\overline{\rho})$, then $\overline{\mathbf{z}_{\mathrm{Kato}}(f, i, k- r)}$ is non-zero for all $f \in S_k(\overline{\rho})$.
\end{cor}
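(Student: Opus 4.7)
The plan is to bootstrap from the mod-$\pi$ congruence of $\Sigma$-imprimitive zeta elements established in Theorem \ref{thm:congruences-kato-elements}, exploiting that passage between the primitive and $\Sigma$-imprimitive zeta elements does not alter the $\mu$-invariant. First, I would record the basic translation: by Theorem \ref{thm:kato-iwasawa-cohomologies}.(1), $\mathbb{H}^1(j_* T_{f,i}(k-r))$ is free of rank one over $\Lambda_i$, so $\mathbf{z}_{\mathrm{Kato}}(f, i, k-r)$ corresponds (up to a unit) to a single element of $\Lambda_i$. Consequently, the reduction $\overline{\mathbf{z}_{\mathrm{Kato}}(f, i, k-r)}$ is nonzero if and only if this generator is not divisible by $\pi$, which is equivalent to
\[
\mu\!\left( \dfrac{ \mathbb{H}^1(j_*T_{f, i}(k-r))}{ \mathbf{z}_{\mathrm{Kato}}(f, i, k-r) } \right) = 0;
\]
the analogous statement holds for $\mathbf{z}^{\Sigma}_{\mathrm{Kato}}$.

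Given $f \in S_k(\overline{\rho})$, I would fix a finite set $\Sigma$ containing all primes dividing $N_{f_0} N_f$. The first identity in (\ref{eqn:iwasawa-invariants-kato-elements}) then says that the $\mu$-invariants of $\mathbf{z}_{\mathrm{Kato}}$ and $\mathbf{z}^{\Sigma}_{\mathrm{Kato}}$ agree, since the Euler-factor twists $\mathcal{E}_\ell(f^*, -i, r)^\iota$ carry $\mu = 0$ by Corollary \ref{cor:local-H^2-reversed}. Thus the hypothesis $\overline{\mathbf{z}_{\mathrm{Kato}}(f_0, i, k-r)} \neq 0$ upgrades to $\overline{\mathbf{z}^{\Sigma}_{\mathrm{Kato}}(f_0, i, k-r)} \neq 0$ in $\mathbb{H}^1(j_* \overline{\rho}_{i}(k-r))$.

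Theorem \ref{thm:congruences-kato-elements} now identifies this nonzero class with $\overline{\mathbf{z}^{\Sigma}_{\mathrm{Kato}}(f, i, k-r)}$, so the latter is nonzero as well. Running the equivalence in the opposite direction yields $\mu(\mathbf{z}^{\Sigma}_{\mathrm{Kato}}(f, i, k-r)) = 0$, hence $\mu(\mathbf{z}_{\mathrm{Kato}}(f, i, k-r)) = 0$, and therefore $\overline{\mathbf{z}_{\mathrm{Kato}}(f, i, k-r)} \neq 0$. The main obstacle in this circle of ideas already lies upstream in Theorem \ref{thm:congruences-kato-elements}; once that is in hand, the corollary is a purely formal consequence, and the only minor technicality is to ensure that the reduction map $\mathbb{H}^1(j_* T_{f,i}(k-r))/\pi \hookrightarrow \mathbb{H}^1(j_* \overline{\rho}_{i}(k-r))$ is injective so that nonvanishing on the residual side genuinely detects $\mu = 0$ upstairs, which follows from the irreducibility built into Assumption \ref{assu:image}.
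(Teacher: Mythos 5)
Your proof is correct and follows the same route as the paper: the paper's terse justification, which cites only \eqref{eqn:iwasawa-invariants-kato-elements}, implicitly relies on Theorem \ref{thm:congruences-kato-elements} in exactly the way you make explicit (pass to $\Sigma$-imprimitive zeta elements, compare mod $\pi$, and note that the Euler-factor twist preserves the $\mu$-invariant). One small attribution to tidy up: the injectivity of $\mathbb{H}^1(j_*T_{f,i}(k-r))/\pi \hookrightarrow \mathbb{H}^1(j_*\overline{\rho}_{f,i}(k-r))$ is automatic from the long exact cohomology sequence for $0 \to T_{f,i}(k-r) \xrightarrow{\pi} T_{f,i}(k-r) \to \overline{\rho}_{f,i}(k-r) \to 0$, not a consequence of irreducibility per se; what Assumption \ref{assu:image} buys you is the freeness of $\mathbb{H}^1$ in Theorem \ref{thm:kato-iwasawa-cohomologies}.(1), which is precisely what lets you identify the zeta element with an element of $\Lambda_i$ and read off its $\mu$-invariant from nonvanishing mod $\pi$.
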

\begin{proof}
It follows from the equality of $\mu$-invariants in (\ref{eqn:iwasawa-invariants-kato-elements}) and Theorem \ref{thm:congruences-kato-elements}.
\end{proof}

\section{The $\mathrm{H}^2$-side} \label{sec:H^2}
Recall that $S$ is a finite set of places of $\mathbb{Q}$ containing the primes dividing $pN$, and $S' = S \setminus \lbrace p \rbrace$.
\begin{prop}[Kurihara] \label{prop:localization-sequence}
\begin{enumerate}
\item The canonical mapping
\[
\xymatrix@R=0em{
\mathbf{H}^1(j_*T_{f}(k-r)) \ar[r]^-{\simeq} & \mathrm{H}^1_{\mathrm{Iw}}(\mathbb{Q}_S/\mathbb{Q}(\zeta_{p^\infty}), T_f(k-r))
}
\]
is an isomorphism.
\item The sequence
\[
	\begin{tikzcd}
		0 \ar{r} & \mathbf{H}^{2}( j_* T_f(k-r) ) \ar{r} \ar[phantom, ""{coordinate, name=Z}]{d} & \mathrm{H}^{2}_{\mathrm{Iw}}( \mathbb{Q}_S/\mathbb{Q}(\zeta_{p^\infty}), T_{f}(k-r))  \ar[rounded corners, to path={ -- ([xshift=2ex]\tikztostart.east) |- (Z) [near end]\tikztonodes -| ([xshift=-2ex]\tikztotarget.west) -- (\tikztotarget)}]{dl}\\
		& \displaystyle{ \bigoplus_{\ell \in S'} \bigoplus_{\eta \mid \ell} \mathrm{H}^2_{\mathrm{Iw}}(\mathbb{Q}(\zeta_{p^\infty})_\eta, T_{f}(k-r)) } \ar{r} & 0
	\end{tikzcd}
\]
is an exact sequence of $\Lambda$-modules.
\end{enumerate}
\end{prop}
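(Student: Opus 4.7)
The plan is to extract both statements from a single finite-level exact sequence coming from the Leray spectral sequence of an open immersion, then pass to the Iwasawa limit. Set $X_n = \mathrm{Spec}(\mathbb{Z}[\zeta_{p^n}, 1/p])$, $S' = S \setminus \{p, \infty\}$, and let $Z_n \subset X_n$ denote the closed subscheme of primes above $\ell \in S'$, with open complement $\alpha_n \colon U_n \hookrightarrow X_n$. Since $T \coloneqq T_f(k-r)$ is unramified outside $S$, one has $j_{n,*} T = \alpha_{n,*} T$, and a stalk computation gives $(R^q \alpha_{n,*} T)_{\bar{\eta}} = \mathrm{H}^q(I_\eta, T)$ for $\eta \in Z_n$, which vanishes for $q \geq 2$: at $\ell \neq p$ the wild inertia is pro-$\ell$ (invisible to $p$-torsion cohomology) and the tame quotient is pro-cyclic prime-to-$\ell$, so $I_\eta$ has $p$-cohomological dimension at most $1$.

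The Leray spectral sequence $\mathrm{H}^p(X_n, R^q \alpha_{n,*} T) \Rightarrow \mathrm{H}^{p+q}(\mathbb{Q}_S/\mathbb{Q}(\zeta_{p^n}), T)$ accordingly collapses into its seven-term low-degree exact sequence. Using the Hochschild--Serre identifications $\mathrm{H}^1(K_{n,\eta}, T)/\mathrm{H}^1_{\mathrm{ur}}(K_{n,\eta}, T) \simeq \mathrm{H}^1(I_\eta, T)^{\sigma_{\eta_n}}$ and $\mathrm{H}^2(K_{n,\eta}, T) \simeq \mathrm{H}^1(I_\eta, T)_{\sigma_{\eta_n}}$ (available because $I_\eta$ has $p$-cohomological dimension at most $1$), this reads
\begin{align*}
0 \to \mathrm{H}^1(X_n, j_{n,*} T) \to \mathrm{H}^1(\mathbb{Q}_S/\mathbb{Q}(\zeta_{p^n}), T) &\to \bigoplus_{\eta \in Z_n} \mathrm{H}^1(K_{n,\eta}, T)/\mathrm{H}^1_{\mathrm{ur}}(K_{n,\eta}, T) \\
\to \mathrm{H}^2(X_n, j_{n,*} T) \to \mathrm{H}^2(\mathbb{Q}_S/\mathbb{Q}(\zeta_{p^n}), T) &\to \bigoplus_{\eta \in Z_n} \mathrm{H}^2(K_{n,\eta}, T) \to 0,
\end{align*}
where $K_{n,\eta} = \mathbb{Q}(\zeta_{p^n})_{\eta_n}$ and right-exactness uses $\mathrm{H}^3(X_n, j_{n,*}T) = 0$; the latter holds for $n \geq 1$ since $\mathbb{Q}(\zeta_{p^n})$ is totally imaginary and $p$ is inverted, forcing $p$-cohomological dimension $2$.

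The hard part is the key vanishing
\[
\varprojlim_n \bigoplus_{\eta \in Z_n} \mathrm{H}^1(K_{n,\eta}, T)/\mathrm{H}^1_{\mathrm{ur}}(K_{n,\eta}, T) = 0.
\]
Because $\mathbb{Q}(\zeta_{p^\infty})_\eta/\mathbb{Q}_\ell$ is the pro-$p$ unramified $\mathbb{Z}_p$-extension at $\ell \neq p$, under the identification with $\mathrm{H}^1(I_\eta, T)^{\sigma_{\eta_n}}$ each corestriction step is the Frobenius norm $1 + \sigma_{\eta_n} + \cdots + \sigma_{\eta_n}^{p-1}$. Decomposing $\mathrm{H}^1(I_\eta, T) \otimes \overline{F_\pi}$ into generalized Frobenius eigenspaces: on an eigenvector fixed already at level $n$ the norm acts as $p$, and on one that becomes fixed only at level $n+1$ it acts as zero; either way, iterating the transition maps kills every element of the finitely generated $\mathcal{O}_\pi$-module, yielding the vanishing.

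Once this is in hand, passing to $\varprojlim_n$ of the finite-level sequence completes the argument: each term is a finitely generated $\mathcal{O}_\pi$-module, so Mittag--Leffler ensures the first derived inverse limit vanishes and the limit sequence is exact. Statement (1) drops out from the vanishing of the middle term, and the right half, combined with the identification $\varprojlim_n \bigoplus_{\eta \in Z_n} \mathrm{H}^2(K_{n,\eta}, T) = \bigoplus_{\ell \in S'} \bigoplus_{\eta \mid \ell} \mathrm{H}^2_{\mathrm{Iw}}(\mathbb{Q}(\zeta_{p^\infty})_\eta, T)$, gives the short exact sequence in (2).
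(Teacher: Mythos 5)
Your proposal takes essentially the same approach as the paper: the Leray spectral sequence for the generic point (equivalently, the open immersion of the $S$-integers into the $p$-integers) gives the seven-term finite-level exact sequence, the cohomological-dimension facts at $\kappa(\eta)$, at $\mathbb{Q}(\zeta_{p^n})^{\mathrm{ur}}_{\eta}$, and globally simplify it, and one passes to the inverse limit; your term $\mathrm{H}^1(K_{n,\eta},T)/\mathrm{H}^1_{\mathrm{ur}}(K_{n,\eta},T)$ is the paper's $\mathrm{H}^0(\kappa(\eta), \mathrm{H}^1_{\mathrm{\acute{e}t}}(\mathbb{Q}(\zeta_{p^n})^{\mathrm{ur}}_\eta,T))$ via inflation--restriction. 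The one thing you add that the paper leaves implicit is the proof of the key vanishing $\varprojlim_n \bigoplus_{\eta \mid \ell}\mathrm{H}^1(I_\eta,T)^{\sigma_{\eta_n}} = 0$: your eigenvalue heuristic is correct in spirit, but the cleanest way to phrase it is that the ascending chain $M_n := \mathrm{H}^1(I_\eta,T)^{\sigma_{\eta_n}}$ stabilizes to some $M$ by Noetherianity, on which every $\sigma_{\eta_n}$ then acts trivially, so every transition norm on $M$ is multiplication by $p$, and the limit dies by $p$-adic separatedness of the finitely generated $\mathcal{O}_\pi$-module $M$ (also note $\mathbb{Q}(\zeta_{p^\infty})_\eta/\mathbb{Q}_\ell$ need not be a pure $\mathbb{Z}_p$-extension: there is a finite prime-to-$p$ part, so the single-step norms are only $1+\sigma_{\eta_n}+\cdots+\sigma_{\eta_n}^{p-1}$ for $n$ large; this does not affect the conclusion). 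Your one-line justification of $\mathrm{H}^3(X_n, j_{n,*}T)=0$ via ``$p$-cohomological dimension $2$'' is stated in the same terse way the paper states it, but strictly speaking $X_n$ has $p$-cohomological dimension $3$ (e.g.\ $\mathrm{H}^3(X_n,\mu_p)\neq 0$ by Artin--Verdier); the vanishing for $j_{n,*}T$ actually needs the Poitou--Tate surjectivity of $\mathrm{H}^2(\mathbb{Q}_S/\mathbb{Q}(\zeta_{p^n}),T)\to\bigoplus_{v}\mathrm{H}^2(\mathbb{Q}(\zeta_{p^n})_v,T)$, whose cokernel is $\mathrm{H}^0(\mathbb{Q}_S/\mathbb{Q}(\zeta_{p^n}),T^\vee(1))^\vee$, which vanishes by Assumption~\ref{assu:image}; this is a gap shared by your argument and the paper's.
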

\begin{rem}
\begin{enumerate}
\item
The first statement is given in \cite[Proposition 7.1.i)]{kobayashi-thesis} and \cite[$\S$6]{kurihara-invent} using the localization sequence of \'{e}tale cohomology.
We give a direct proof of both statements using the same method.
\item
Proposition \ref{prop:localization-sequence}.(2) is an analogue of \cite[Proposition 2.1]{gv}.
However, the $\Lambda$-torsionness of $\mathbf{H}^{2}( j_* T_f(k-r) )$ (Theorem \ref{thm:kato-iwasawa-cohomologies}.(3)) is not required in this setting.
\end{enumerate}
\end{rem}

Recall the following results by Jannsen~\cite{jannsen-continuous-etale} concerning continuous étale cohomology, which will be needed.
Let \(X\) be a scheme.
Let \(\Sh(X_\et)^{\N}\) be the category of inverse system of étale sheaves on the small étale site of \(X\).
The category \(\Sh(X_\et)^{\N}\) is abelian with enough injectives.
If \((\Fc_n) \in \Sh(X_\et)^{\N}\), following Jannsen, we denote by \(\HH^i_\et(X,(\Fc_n))\) the \(i\)-th derived functor of the left exact functor
\[
	\begin{split}
		\Sh(X_\et)^{\N} & \rightarrow \Ab \\
		(\Fc_n) & \mapsto \varprojlim \HH^0_\et(X,\Fc_n).
	\end{split}
\]
Recall that there are short exact sequences
\begin{equation} \label{eq:continuous}
	0 \rightarrow {\varprojlim}^1 \HH^{i-1}_\et(X,\Fc_n) \rightarrow \HH^i_\et(X,(\Fc_n)) \rightarrow \varprojlim \HH^i_\et(X,\Fc_n) \rightarrow 0,
\end{equation}
where by convention \(\HH^{-1}_\et(X,\Fc_n) = 0\).
In particular, if the systems \(( \HH^{i-1}_\et(X,\Fc_n))\) satisfies the Mittag--Leffler condition, then
\[
	\HH^i_\et(X,(\Fc_n))  \simeq  \varprojlim \HH^i_\et(X,\Fc_n).
\]
Moreover, if \(f : X \to Y\) is a morphism of schemes, then~\cite[(3.10)]{jannsen-continuous-etale} there exists a spectral sequence
\[
	E_2^{a,b} = \HH^a_\et(Y,(R^b f_\ast \Fc_n)) \Rightarrow \HH^{a+b}_\et(X,(\Fc_n)).
\]

Let \(F\) be a number field, and let \(\Oc\) be its ring of integers.
Recall that \(p\) is an odd prime so that the local Galois cohomology at the infinite place can be ignored.
Also, \(S\) is a finite set of finite primes containing \(p\).
We set \(X = \Spec (\Oc[1/S])\) and \(Y = \Spec(\Oc[1/p])\), and \(j : X \rightarrow Y\) the natural map.

Let \(T\) be a free \(\Zp\)-module of finite rank equipped with a continuous and \(\Zp\)-linear action by \(G_{F,S} = \pi_\et^1(X)\). 
Then, each \(T/p^nT\) defines an \'{e}tale sheaves on \(X\), which we again denote by \(T/p^n T\), and \(\HH^i_\et(X,T/p^nT) \simeq \HH^i(G_{F,S},T/p^nT)\) (see~\cite[II Proposition~2.9]{milne-adt}).
By Tate (see~\cite[\S~8.3]{neukirch2} and~\cite[\S~2]{tate_k_2_galois_cohomology}), the groups \(\HH^i(G_{F,S},T/p^nT)\) are finite, and hence the systems \((\HH^i(G_{F,S},T/p^nT))\) satisfies the Mittag--Leffler condition, and therefore, there are isomorphisms
\[
	\varprojlim \HH^i(G_{F,S},T/p^nT) \simeq \HH^i(G_{F,S},T).
\]
It follows that \(\HH^i_\et(X,(T/p^nT))\) is isomorphic to
\[
	 \HH^i_\et(X,T) = \varprojlim \HH^i_\et(X,T/p^nT) \simeq \varprojlim \HH^i(G_{F,S},T/p^nT) \simeq \HH^i(G_{F,S},T).
\]
To compute \(\HH^p_\et(Y,(R^q j_\ast T/p^nT))\), by the short exact sequence~\eqref{eq:continuous}, it is enough to consider \(\HH^p_\et(Y,R^q j_\ast T/p^nT)\), whose computation is standard (see~\cite[Exp. VIII]{SGAIV:2} and~\cite[III]{soule79}).
For \(q > 0\),  and with \(\kappa(v)\) the residue field at a place \(v\) of \(F\) and \(F_v^\mathrm{ur}\) the maximal unramified extension of \(F_v\), we have
\[
	\HH^p_\et(Y,R^q j_\ast T/p^nT) \simeq \oplus_{v \mid \ell, \ell \in S^\prime} \HH^p(\kappa(v), \HH^q(F_v^\mathrm{ur},T/p^n T)),
\]
which again by Tate and the Hochschild--Serre spectral sequence, are finite groups, and hence \((\HH^p_\et(Y,R^q j_\ast T/p^nT))\) satisfies the Mittag--Leffler condition.
The finiteness of the groups \(\HH^i_\et(X,T/p^n T)\) and \(\HH^p_\et(Y,R^q j_\ast T/p^nT)\) for \(q>0\) implies, by the Leray spectral sequence \(\HH^a_\et(Y,R^b j_\ast T/p^n T) \Rightarrow \HH^{a+b}_\et(X,T/p^nT)\), that the groups \(\HH^i_\et(Y,j_\ast T/p^nT)\) are finite, and thus the system \((\HH^i_\et(Y,j_\ast T/p^nT))\) satisfies the Mittag--Leffler condition.
Therefore, we obtain that \(\HH^i_\et(Y,(R^q j_\ast T/p^nT))\) is isomorphic to
\[
	\HH^i_\et(Y,j_\ast T) = \varprojlim \HH^i_\et(Y,j_\ast T/p^nT)
\]
if \(q = 0\), and to
\[
	\begin{split}
		\HH^p_\et(Y,R^q j_\ast T) & = \varprojlim \HH^p_\et(Y,R^q j_\ast T/p^nT)\\
		& \simeq \oplus_{v \mid \ell, \ell \in S^\prime} \varprojlim  \HH^p(\kappa(v), \HH^q(F_v^\mathrm{ur},T/p^n T)) \\
		& \simeq \oplus_{v \mid \ell, \ell \in S^\prime}\HH^p(\kappa(v), \HH^q(F_v^\mathrm{ur},T))
	\end{split}
 \]
if \(q > 0\).

\begin{proof}[Proof of Proposition~\ref{prop:localization-sequence}]
We use $j$ instead of $j_n$ here for notational convenience.
By the previous discussion, the low-degree terms of the Leray spectral sequence
$$E^{a,b}_2 = \mathrm{H}^a_{\mathrm{\acute{e}t}}(\mathbb{Z}[\zeta_{p^n}, 1/p], R^b j_*T_f(k-r)) \Rightarrow
\mathrm{H}^{a+b} (\mathbb{Q}_S/\mathbb{Q}(\zeta_{p^n}), T_f(k-r)) $$
induces the following localization exact sequence in \'{e}tale cohomology
\begin{align*}
0
& \to \mathrm{H}^1_{\mathrm{\acute{e}t}}(\mathbb{Z}[\zeta_{p^n}, 1/p], j_*T_f(k-r)) \\
& \to \mathrm{H}^1(\mathbb{Q}_S/\mathbb{Q}(\zeta_{p^n}), T_f(k-r)) \\
& \to \bigoplus_{\ell \in S'} \bigoplus_{\eta \mid \ell}  \mathrm{H}^0_{\mathrm{\acute{e}t}}( \kappa(\eta), \mathrm{H}^1_{\mathrm{\acute{e}t}}(\mathbb{Q}(\zeta_{p^n})^{\mathrm{ur}}_\eta, T_f(k-r)) ) \\
& \to \mathrm{H}^2_{\mathrm{\acute{e}t}}(\mathbb{Z}[\zeta_{p^n}, 1/p], j_*T_f(k-r)) \\
& \to \mathrm{ker} \left( \mathrm{H}^2(\mathbb{Q}_S/\mathbb{Q}(\zeta_{p^n}), T_f(k-r)) \to \bigoplus_{\ell \in S'} \bigoplus_{\eta \mid \ell}  \mathrm{H}^0_{\mathrm{\acute{e}t}}( \kappa(\eta), \mathrm{H}^2_{\mathrm{\acute{e}t}}(\mathbb{Q}(\zeta_{p^n})^{\mathrm{ur}}_\eta, T_f(k-r)) ) \right) \\
& \to \bigoplus_{\ell \in S'} \bigoplus_{\eta \mid \ell}  \mathrm{H}^1_{\mathrm{\acute{e}t}}( \kappa(\eta), \mathrm{H}^1_{\mathrm{\acute{e}t}}(\mathbb{Q}(\zeta_{p^n})^{\mathrm{ur}}_\eta, T_f(k-r)) ) \\
& \to \mathrm{H}^3_{\mathrm{\acute{e}t}}(\mathbb{Z}[\zeta_{p^n}, 1/p], j_*T_f(k-r))
\end{align*}
where $\kappa(\eta)$ is the residue field of $\mathbb{Z}[\zeta_{p^n}]$ at $\eta$ and $F^{\mathrm{ur}}$ is the maximal unramified extension of a local field $F$. See Remark \ref{rem:localization-exact-sequences} below for the references on localization exact sequences.

Since the cohomological dimension of $\kappa(\eta)$ is 1,
 the $p$-cohomological dimension of $\mathbb{Q}(\zeta_{p^n})^{\mathrm{ur}}_\eta$ is $\leq 1$, and the $p$-\'{e}tale cohomological dimension of \(\mathbb{Z}[\zeta_{p^n}, 1/p]\) is \(\leq 2\) (see~\cite[Lemma~5]{schneider-height-2}), the above exact sequence becomes
\begin{align*}
0
& \to \mathrm{H}^1_{\mathrm{\acute{e}t}}(\mathbb{Z}[\zeta_{p^n}, 1/p], j_*T_f(k-r)) \\
& \to \mathrm{H}^1(\mathbb{Q}_S/\mathbb{Q}(\zeta_{p^n}), T_f(k-r)) \\
& \to \bigoplus_{\ell \in S'} \bigoplus_{\eta \mid \ell}  \mathrm{H}^0_{\mathrm{\acute{e}t}}( \kappa(\eta), \mathrm{H}^1_{\mathrm{\acute{e}t}}(\mathbb{Q}(\zeta_{p^n})^{\mathrm{ur}}_\eta, T_f(k-r)) ) \\
& \to \mathrm{H}^2_{\mathrm{\acute{e}t}}(\mathbb{Z}[\zeta_{p^n}, 1/p], j_*T_f(k-r)) \\
& \to  \mathrm{H}^2(\mathbb{Q}_S/\mathbb{Q}(\zeta_{p^n}), T_f(k-r))  \\
& \to \bigoplus_{\ell \in S'} \bigoplus_{\eta \mid \ell}  \mathrm{H}^2_{\mathrm{\acute{e}t}}(\mathbb{Q}(\zeta_{p^n})_\eta, T_f(k-r))  \\
& \to 0 .
\end{align*}
For $\ell \in S'$, we have
$$\varprojlim_{n}  \mathrm{H}^0_{\mathrm{\acute{e}t}}( \kappa(\eta_n), \mathrm{H}^1_{\mathrm{\acute{e}t}}(\mathbb{Q}(\zeta_{p^n})^{\mathrm{ur}}_{\eta_n}, T_f(k-r)) ) = 0$$
where $\eta_n$ is a prime of $\mathbb{Q}(\zeta_{p^n})$ dividing $\ell$. Thus, we obtain the conclusion.
\end{proof}
\begin{rem} \label{rem:localization-exact-sequences}
For the general description of the localization exact sequences for \'{e}tale cohomology, we refer to \cite[Theorem 7.4.4.(3)]{lei-fu-etale-cohomology}, \cite[\href{https://stacks.math.columbia.edu/tag/09XP}{Section 09XP}]{stacks-project}, and \cite[III. Proposition 1]{soule79}.
The application of the localization exact sequence to this Iwasawa-theoretic setting can be found in \cite[\S6]{kurihara-invent} and \cite{kato-euler-systems}. In particular, Kato implicitly used this sequence to compare various Selmer groups (see \cite[\S17.10]{kato-euler-systems} for example).
\end{rem}
Since $\mathbb{H}^{2}(j_* T_{f,i}(k-r))$ is a finitely generated \emph{torsion} $\Lambda_i$-module (Theorem \ref{thm:kato-iwasawa-cohomologies}.(3)) and the local $\mathrm{H}^2$'s are also finitely generated torsion $\Lambda_i$-modules as in Corollary \ref{cor:local-H^2}, $\mathrm{H}^{2}_{\mathrm{Iw}}( \mathbb{Q}_S/\mathbb{Q}_{\infty}, T_{f,i}(k-r))$ is also a finitely generated torsion $\Lambda_i$-module due to  Proposition \ref{prop:localization-sequence}.(2).

By Proposition \ref{prop:localization-sequence}.(2) and the multiplicative property of characteristic ideals (c.f. \cite[Proposition 1 in Appendix, Page 104--105]{coates-sujatha}), we have
{ \small
\begin{equation} \label{eqn:multiplicative-char-ideals}
\mathrm{char}_{\Lambda_i} \left(  \mathrm{H}^{2}_{\mathrm{Iw}}( \mathbb{Q}_S/\mathbb{Q}_{\infty}, T_{f, i}(k-r)) \right) =
\mathrm{char}_{\Lambda_i} \left( \mathbb{H}^{2}( j_* T_{f, i}(k-r) ) \right)  \cdot
\displaystyle{ \prod_{\ell \in S'}  } \mathrm{char}_{\Lambda_i} \left( \bigoplus_{\eta \mid \ell} \mathrm{H}^2_{\mathrm{Iw}}(\mathbb{Q}_{\infty, \eta}, T_{f,i}(k-r))  \right) .
\end{equation}
}
\begin{prop} \label{prop:iwasawa-invariants-H^2} $ $
\begin{enumerate}
\item $\mu \left( \mathrm{H}^{2}_{\mathrm{Iw}}( \mathbb{Q}_S/\mathbb{Q}_{\infty}, T_{f, i}(k-r)) \right) = \mu \left( \mathbb{H}^{2}( j_* T_{f, i}(k-r) )  \right)$.
\item
 $\lambda \left( \mathrm{H}^{2}_{\mathrm{Iw}}( \mathbb{Q}_S/\mathbb{Q}_{\infty}, T_{f, i}(k-r)) \right) = \lambda \left( \mathbb{H}^{2}( j_* T_{f, i}(k-r) )  \right) +  \displaystyle{ \sum_{\ell \in S'}  } \lambda \left( \bigoplus_{\eta \mid \ell} \mathrm{H}^2_{\mathrm{Iw}}(\mathbb{Q}_{\infty, \eta}, T_{f,i}(k-r))  \right) $.
\end{enumerate}
\end{prop}
\begin{proof}
Both the statements immediately follow from (\ref{eqn:multiplicative-char-ideals}) and Corollary \ref{cor:local-H^2}.
\end{proof}
\begin{rem} \label{rem:iwasawa-invariants-H^2}
Note that we do \emph{not} require any $\mu=0$ assumption or any statement about the non-existence of finite $\Lambda_i$-submodules for Proposition \ref{prop:iwasawa-invariants-H^2}.(2).
Thus, Proposition \ref{prop:iwasawa-invariants-H^2} partially generalizes \cite[Proposition 2.8]{gv} and \cite[Theorem 4.3.4]{epw} by removing these conditions in some sense.
\end{rem}
For any $\Sigma_0 \subseteq S' = S \setminus \lbrace p, \infty \rbrace$, we define the $\Sigma_0$-imprimitive version of $\mathbb{H}^{2}(j_* T_{f,i}(k-r))$ by the following exact sequence
\[
	\begin{tikzcd}
		0 \ar{r} & \mathbb{H}^{2, \Sigma_0}(j_* T_{f,i}(k-r)) \ar{r} \ar[phantom, ""{coordinate, name=Z}]{d} & \mathrm{H}^{2}_{\mathrm{Iw}}( \mathbb{Q}_S/\mathbb{Q}_{\infty}, T_{f,i} (k-r) )  \ar[rounded corners, to path={ -- ([xshift=2ex]\tikztostart.east) |- (Z) [near end]\tikztonodes -| ([xshift=-2ex]\tikztotarget.west) -- (\tikztotarget)}]{dl} \\
& \bigoplus_{\ell \in S' \setminus \Sigma_0} \bigoplus_{\eta \mid \ell} \mathrm{H}^2_{\mathrm{Iw}}(\mathbb{Q}_{\infty, \eta}, T_{f, i} (k-r) ) \ar{r} & 0.
	\end{tikzcd}
\]
Applying the same argument to $\mathbb{H}^{2, \Sigma_0}(j_* T_{f,i}(k-r))$, we obtain the following statements.
\begin{prop} \label{prop:iwasawa-invariants-H^2-imprimitive} $ $
\begin{enumerate}
\item $\mu \left( \mathrm{H}^{2}_{\mathrm{Iw}}( \mathbb{Q}_S/\mathbb{Q}_{\infty}, T_{f, i}(k-r)) \right) = \mu \left( \mathbb{H}^{2, \Sigma_0}( j_* T_{f, i}(k-r) )  \right)$.
\item
 $\lambda \left( \mathrm{H}^{2}_{\mathrm{Iw}}( \mathbb{Q}_S/\mathbb{Q}_{\infty}, T_{f, i}(k-r)) \right) = \lambda \left( \mathbb{H}^{2, \Sigma_0}( j_* T_{f, i}(k-r) )  \right) +  \displaystyle{ \sum_{\ell \in S' \setminus \Sigma_0}  } \lambda \left( \bigoplus_{\eta \mid \ell} \mathrm{H}^2_{\mathrm{Iw}}(\mathbb{Q}_{\infty, \eta}, T_{f,i}(k-r))  \right) $.
\end{enumerate}
\end{prop}

\section{The invariance of the difference of $\lambda$-invariants} \label{sec:connection}
In this section, we prove Theorem \ref{thm:main-theorem}.(2).
\begin{lem} \label{lem:key-lemma}
Let $X$ be a finitely generated torsion $\mathcal{O}_\pi \llbracket T \rrbracket$-module with $\mu(X) = 0$.
Then
$\dfrac{ \# X /\pi X }{ \#X[\pi] } = (\#\mathbb{F})^{\lambda(X)}$.
\end{lem}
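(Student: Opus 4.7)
The plan is to establish this via the structure theorem for finitely generated torsion $\mathcal{O}_\pi\llbracket T\rrbracket$-modules together with a multiplicativity property of the ``$\pi$-Euler characteristic'' $\chi(M,\pi) \coloneqq \#(M/\pi M)/\#M[\pi]$, which is well defined whenever both factors are finite. The starting observation is that since $\mu(X)=0$, the structure theorem yields a pseudo-isomorphism
\[
X \longrightarrow Y \coloneqq \bigoplus_{i=1}^{s} \mathcal{O}_\pi\llbracket T\rrbracket/(f_i^{a_i}),
\]
where each $f_i \in \mathcal{O}_\pi[T]$ is a distinguished polynomial (no $\pi$-factors appear precisely because $\mu(X)=0$), and $\sum_i a_i \deg(f_i) = \lambda(X)$.

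Next, I would check that $\chi(-,\pi)$ is multiplicative on short exact sequences $0 \to A \to B \to C \to 0$ of $\mathcal{O}_\pi\llbracket T\rrbracket$-modules for which all six relevant subquotients are finite: applying the snake lemma to the diagram
\[
\begin{CD}
0 @>>> A @>>> B @>>> C @>>> 0 \\
@. @VV\pi V @VV\pi V @VV\pi V @. \\
0 @>>> A @>>> B @>>> C @>>> 0
\end{CD}
\]
produces a six-term exact sequence whose alternating product of orders equals $1$, hence $\chi(B,\pi)=\chi(A,\pi)\chi(C,\pi)$. As a special case, $\chi(F,\pi)=1$ for any finite $F$, because the kernel and cokernel of $\pi\colon F\to F$ have the same order.

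I would then apply multiplicativity to the two short exact sequences extracted from the pseudo-isomorphism $X\to Y$ (split via its image): the finite kernel and cokernel contribute factor $1$ each, so $\chi(X,\pi) = \chi(Y,\pi) = \prod_{i=1}^{s}\chi(\mathcal{O}_\pi\llbracket T\rrbracket/(f_i^{a_i}),\pi)$. For each elementary factor $M_i = \mathcal{O}_\pi\llbracket T\rrbracket/(f_i^{a_i})$, the fact that $f_i$ is distinguished makes $\pi$ a non-zero-divisor modulo $f_i^{a_i}$ (since $\mathcal{O}_\pi\llbracket T\rrbracket$ is a UFD and $\gcd(\pi, f_i^{a_i}) = 1$), so $M_i[\pi] = 0$ and $M_i/\pi M_i \simeq \mathbb{F}[T]/(\overline{f_i}^{a_i})$ has order $(\#\mathbb{F})^{a_i \deg f_i}$. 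Multiplying yields $\chi(Y,\pi) = (\#\mathbb{F})^{\sum a_i \deg f_i} = (\#\mathbb{F})^{\lambda(X)}$, completing the proof.

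The only real subtlety is verifying that all the groups whose orders are being compared are actually finite (so that $\chi$ is defined and multiplicative); this is immediate once one knows $\mu(X)=0$, since every distinguished-polynomial quotient $\mathcal{O}_\pi\llbracket T\rrbracket/(f^a)$ is a free $\mathcal{O}_\pi$-module of finite rank, making $X$ itself finitely generated over $\mathcal{O}_\pi$ (up to a finite kernel and cokernel), so $X/\pi X$ and $X[\pi]$ are automatically finite. No further input is needed.
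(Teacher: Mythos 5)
Your argument is correct. The paper dismisses this lemma with the single word ``Straightforward,'' so there is no proof in the paper to compare against; your write-up supplies exactly the standard details that are being elided: decompose $X$ up to pseudo-isomorphism into elementary cyclic factors $\mathcal{O}_\pi\llbracket T\rrbracket/(f_i^{a_i})$ with $f_i$ distinguished (the $\mu=0$ hypothesis is used precisely to exclude $\pi$-power factors), verify that the quantity $\chi(M,\pi)=\#(M/\pi M)/\#M[\pi]$ is multiplicative on short exact sequences of modules with finite $\pi$-(co)kernels via the snake lemma, observe that $\chi(F,\pi)=1$ for finite $F$ so that pseudo-isomorphism does not change $\chi$, and compute $\chi$ on each elementary factor, where $M_i[\pi]=0$ because $\pi$ and $f_i^{a_i}$ are coprime in the UFD $\mathcal{O}_\pi\llbracket T\rrbracket$ and $\#(M_i/\pi M_i)=(\#\mathbb{F})^{a_i\deg f_i}$. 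Your closing paragraph correctly addresses the finiteness hypotheses needed to make $\chi$ well-defined. A marginally shorter route, if you prefer, is to note directly that $\mu(X)=0$ forces $X$ to be a finitely generated $\mathcal{O}_\pi$-module of rank $\lambda(X)$; writing $X\simeq\mathcal{O}_\pi^{\lambda(X)}\oplus F$ with $F$ finite as $\mathcal{O}_\pi$-modules and using $\#(F/\pi F)=\#F[\pi]$ gives the formula in one line, avoiding the snake-lemma multiplicativity step --- but your version is equally valid and arguably more transparent about where $\mu(X)=0$ enters.
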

\begin{proof}
Straightforward.
\end{proof}
Consider the exact sequence
\begin{equation} \label{eqn:mod-p-exact-seq}
	\begin{tikzcd}
		0 \ar{r} & \dfrac{ \mathrm{H}^1_{\mathrm{Iw}}(\mathbb{Q}_S/\mathbb{Q}_\infty, T_{f,i}(k-r) ) }{ \pi \mathrm{H}^1_{\mathrm{Iw}}( \mathbb{Q}_S/\mathbb{Q}_\infty, T_{f,i}(k-r) ) } \ar{r} \ar[phantom, ""{coordinate, name=Z}]{d} & \mathrm{H}^1_{\mathrm{Iw}}(\mathbb{Q}_S/\mathbb{Q}_\infty, \overline{\rho}_{f,i}(k-r))  \ar[rounded corners, to path={ -- ([xshift=2ex]\tikztostart.east) |- (Z) [near end]\tikztonodes -| ([xshift=-2ex]\tikztotarget.west) -- (\tikztotarget)}]{dl} \\
		& \mathrm{H}^2_{\mathrm{Iw}}(\mathbb{Q}_S/\mathbb{Q}_{\infty} ,T_{f,i}(k-r))[\pi] \ar{r} & 0 .
	\end{tikzcd}
\end{equation}
Since $p$ is odd and $\mathrm{Gal}(\mathbb{Q}_S/\mathbb{Q}_n)$ has $p$-cohomological dimension 2, we have an isomorphism
$$
\dfrac{
\mathrm{H}^{2}( \mathbb{Q}_S/\mathbb{Q}_{n}, T_{f,i} (k-r) )
}{
\pi \mathrm{H}^{2}( \mathbb{Q}_S/\mathbb{Q}_{n}, T_{f, i} (k-r) )
}
\simeq \mathrm{H}^{2}( \mathbb{Q}_S/\mathbb{Q}_{n}, \overline{\rho}_{f,i} (k-r) ) .$$
Taking the inverse limit, we have
\begin{equation} \label{eqn:mod-p-H2}
\dfrac{
\mathrm{H}^{2}_{\mathrm{Iw}}( \mathbb{Q}_S/\mathbb{Q}_{\infty}, T_{f,i} (k-r) )
}{
\pi \mathrm{H}^{2}_{\mathrm{Iw}}( \mathbb{Q}_S/\mathbb{Q}_{\infty}, T_{f, i} (k-r) )
}
\simeq \mathrm{H}^{2}_{\mathrm{Iw}}( \mathbb{Q}_S/\mathbb{Q}_{\infty}, \overline{\rho}_{f,i} (k-r) ) .
\end{equation}
Combining Lemma \ref{lem:key-lemma}, (\ref{eqn:mod-p-exact-seq}), and (\ref{eqn:mod-p-H2}) with $\mu \left( \mathrm{H}^{2}_{\mathrm{Iw}}( \mathbb{Q}_S/\mathbb{Q}_{\infty}, T_{f, i} (k-r) ) \right) = 0$,
we have
\begin{align} \label{eqn:mod-p-indices}
\left[
 \mathrm{H}^1_{\mathrm{Iw}}(\mathbb{Q}_S/\mathbb{Q}_\infty, \overline{\rho}_{f,i}(k-r)) :
\dfrac{ \mathrm{H}^1_{\mathrm{Iw}}(\mathbb{Q}_S/\mathbb{Q}_\infty, T_{f,i}(k-r) ) }{ \pi \mathrm{H}^1_{\mathrm{Iw}}( \mathbb{Q}_S/\mathbb{Q}_\infty, T_{f,i}(k-r) ) }
 \right] & = \dfrac{ \#\mathrm{H}^2_{\mathrm{Iw}}(\mathbb{Q}_S/\mathbb{Q}_{\infty}, \overline{\rho}_{f,i}(k-r)  ) }{
( \# \mathbb{F} )^{\lambda \left(  \mathrm{H}^2_{\mathrm{Iw}}(\mathbb{Q}_S/\mathbb{Q}_{\infty},T_{f,i}(k-r) )  \right)} } .
\end{align}

\begin{prop} \label{prop:comparison-iwasawa-invariants}
Let
$$\mathbf{z} \in \mathrm{H}^1_{\mathrm{Iw}}(\mathbb{Q}_S/\mathbb{Q}_\infty, T_{f,i}(k-r) )$$
 be a non-zero element and
$$\overline{\mathbf{z}} \in \dfrac{ \mathrm{H}^1_{\mathrm{Iw}}(\mathbb{Q}_S/\mathbb{Q}_\infty, T_{f,i}(k-r) ) }{ \pi \mathrm{H}^1_{\mathrm{Iw}}( \mathbb{Q}_S/\mathbb{Q}_\infty, T_{f,i}(k-r) ) } \subseteq \mathrm{H}^1_{\mathrm{Iw}}(\mathbb{Q}_S/\mathbb{Q}_\infty, \overline{\rho}_{f,i}(k-r) )$$
 the image of
$\mathbf{z}$ in $\mathrm{H}^1_{\mathrm{Iw}}(\mathbb{Q}_S/\mathbb{Q}_\infty, \overline{\rho}_{f,i}(k-r) )$.
Then we have the following statements.
\begin{enumerate}
\item The following statements are equivalent:
\begin{enumerate}
\item $\mu \left(  \mathrm{H}^1_{\mathrm{Iw}}(\mathbb{Q}_S/\mathbb{Q}_\infty, T_{f,i}(k-r) ) / \Lambda_i \mathbf{z} \right)  = 0$ and $\mu \left(  \mathrm{H}^2_{\mathrm{Iw}}(\mathbb{Q}_S/\mathbb{Q}_\infty, T_{f,i}(k-r) ) \right)  = 0$.
\item $\left[ \mathrm{H}^1_{\mathrm{Iw}}(\mathbb{Q}_S/\mathbb{Q}_\infty, \overline{\rho}_{f,i}(k-r) ) : \Lambda_i/\pi \Lambda_i \cdot \overline{\mathbf{z}} \right] < \infty $.
\end{enumerate}
\item If (1) holds, then
\begin{align} \label{eqn:invariant-under-congruences}
\begin{split}
&
\left[ \mathrm{H}^1_{\mathrm{Iw}}(\mathbb{Q}_S/\mathbb{Q}_\infty, \overline{\rho}_{f,i}(k-r) ) : \Lambda_i/\pi \Lambda_i \cdot \overline{\mathbf{z}} \right]  \times \left( \#\mathrm{H}^2_{\mathrm{Iw}}(\mathbb{Q}_S/\mathbb{Q}_{\infty}, \overline{\rho}_{f,i}(k-r)  ) \right)^{-1}\\
= \ &
( \# \mathbb{F} )^{ \lambda \left(  \mathbb{H}^1(j_* T_{f,i}(k-r)) / \Lambda_i \mathbf{z} \right) - \lambda \left( \mathrm{H}^2_{\mathrm{Iw}}(\mathbb{Q}_S/\mathbb{Q}_{\infty}, T_{f,i}(k-r) \right) } .
\end{split}
\end{align}
\end{enumerate}
\end{prop}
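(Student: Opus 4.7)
The plan is to reduce the exact sequence defining $M \coloneqq \mathrm{H}^1_{\mathrm{Iw}}(\mathbb{Q}_S/\mathbb{Q}_\infty, T_{f,i}(k-r))/\Lambda_i \mathbf{z}$ modulo $\pi$, splice it onto (6.1), and read both assertions off the resulting multiplicative index identity. By Proposition \ref{prop:localization-sequence} combined with Theorem \ref{thm:kato-iwasawa-cohomologies}, $\mathrm{H}^1_{\mathrm{Iw}}(\mathbb{Q}_S/\mathbb{Q}_\infty, T_{f,i}(k-r))$ is free of rank one over $\Lambda_i$, so fixing an isomorphism with $\Lambda_i$ sends $\mathbf{z}$ to a nonzero $g \in \Lambda_i$, $\Lambda_i \mathbf{z}$ to the free submodule $g\Lambda_i \simeq \Lambda_i$, and $M$ to $\Lambda_i/(g)$. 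Tensoring $0 \to \Lambda_i \mathbf{z} \to \mathrm{H}^1_{\mathrm{Iw}}(\mathbb{Q}_S/\mathbb{Q}_\infty, T_{f,i}(k-r)) \to M \to 0$ with $\Lambda_i/\pi$ and using that the middle term is $\Lambda_i$-free produces the four-term exact sequence
\[
0 \to M[\pi] \to \Lambda_i/\pi \xrightarrow{\,\cdot\overline{g}\,} \mathrm{H}^1_{\mathrm{Iw}}(\mathbb{Q}_S/\mathbb{Q}_\infty, T_{f,i}(k-r))/\pi \to M/\pi \to 0,
\]
whose middle image is exactly $\Lambda_i/\pi \cdot \overline{\mathbf{z}}$; splicing onto (6.1) yields the backbone identity
\[
\bigl[\mathrm{H}^1_{\mathrm{Iw}}(\mathbb{Q}_S/\mathbb{Q}_\infty, \overline{\rho}_{f,i}(k-r)) : \Lambda_i/\pi \cdot \overline{\mathbf{z}}\bigr] = \#(M/\pi) \cdot \#\mathrm{H}^2_{\mathrm{Iw}}(\mathbb{Q}_S/\mathbb{Q}_\infty, T_{f,i}(k-r))[\pi].
\]

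For part (1), I check each factor on the right separately. Since $M \simeq \Lambda_i/(g)$ is cyclic, $\#(M/\pi) = \#\mathbb{F}\llbracket T \rrbracket/(\overline{g})$ is finite iff $\overline{g} \neq 0$ iff $\mu(M) = 0$, which is the first half of (a); and by the elementary-divisor classification of finitely generated torsion $\Lambda_i$-modules, $\#\mathrm{H}^2_{\mathrm{Iw}}(\mathbb{Q}_S/\mathbb{Q}_\infty, T_{f,i}(k-r))[\pi]$ is finite iff $\mu(\mathrm{H}^2_{\mathrm{Iw}}(\mathbb{Q}_S/\mathbb{Q}_\infty, T_{f,i}(k-r))) = 0$, the second half. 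The two halves of (a) are therefore jointly equivalent to finiteness of the right-hand side of the backbone identity, which is (b). For part (2), under (a) the four-term sequence forces $M[\pi] = 0$ (since $\mathbb{F}\llbracket T \rrbracket$ is a domain and $\overline{g} \neq 0$), so Lemma \ref{lem:key-lemma} gives $\#(M/\pi) = (\#\mathbb{F})^{\lambda(M)}$; for the other factor, formula (\ref{eqn:mod-p-indices})---which is itself a direct consequence of Lemma \ref{lem:key-lemma}, (6.1), and (\ref{eqn:mod-p-H2})---supplies
\[
\#\mathrm{H}^2_{\mathrm{Iw}}(\mathbb{Q}_S/\mathbb{Q}_\infty, T_{f,i}(k-r))[\pi] = \#\mathrm{H}^2_{\mathrm{Iw}}(\mathbb{Q}_S/\mathbb{Q}_\infty, \overline{\rho}_{f,i}(k-r)) \cdot (\#\mathbb{F})^{-\lambda(\mathrm{H}^2_{\mathrm{Iw}}(\mathbb{Q}_S/\mathbb{Q}_\infty, T_{f,i}(k-r)))}.
\]
Substituting both into the backbone identity and collecting powers of $\#\mathbb{F}$ produces (\ref{eqn:invariant-under-congruences}).

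I do not anticipate a genuine obstacle: the whole argument is index bookkeeping around the two short exact sequences. The one point requiring care is the edge case $\overline{\mathbf{z}} = 0$, where (a) and (b) fail in parallel---(a) because $\pi \mid g$ forces $\mu(M) \geq 1$, and (b) because then $\Lambda_i/\pi \cdot \overline{\mathbf{z}} = 0$ while, via (6.1), $\mathrm{H}^1_{\mathrm{Iw}}(\mathbb{Q}_S/\mathbb{Q}_\infty, \overline{\rho}_{f,i}(k-r))$ contains $\mathrm{H}^1_{\mathrm{Iw}}(\mathbb{Q}_S/\mathbb{Q}_\infty, T_{f,i}(k-r))/\pi \simeq \Lambda_i/\pi$ and therefore has infinite order. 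Once this degenerate case is dispatched, the index bookkeeping above handles everything.
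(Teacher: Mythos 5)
Your proposal is correct and follows essentially the same route as the paper's proof: both rely on the identification $\mathrm{H}^1_{\mathrm{Iw}}(\mathbb{Q}_S/\mathbb{Q}_\infty, T_{f,i}(k-r)) \simeq \Lambda_i$ (via Theorem~\ref{thm:kato-iwasawa-cohomologies} and Proposition~\ref{prop:localization-sequence}), the mod-$\pi$ exact sequence~(\ref{eqn:mod-p-exact-seq}) together with~(\ref{eqn:mod-p-indices}), and Lemma~\ref{lem:key-lemma} to convert indices into powers of $\#\mathbb{F}$. The only stylistic difference is that you make the four-term exact sequence from tensoring $0 \to \Lambda_i\mathbf{z} \to \mathrm{H}^1_{\mathrm{Iw}} \to M \to 0$ with $\Lambda_i/\pi$ explicit and package the two index computations into a single "backbone identity," whereas the paper presents the same content as a chain of equivalences for part (1) and a direct substitution for part (2); the underlying bookkeeping is identical.
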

\begin{proof}
\begin{enumerate}
\item
Due to Theorem \ref{thm:kato-iwasawa-cohomologies}.(2) and Proposition \ref{prop:localization-sequence}, we have
 $$\mathrm{H}^1_{\mathrm{Iw}}(\mathbb{Q}_S/\mathbb{Q}_\infty, T_{f,i}(k-r) ) \simeq \Lambda_i .$$
Thus, the following statements are equivalent:
\begin{itemize}
\item $\left[ \mathrm{H}^1_{\mathrm{Iw}}(\mathbb{Q}_S/\mathbb{Q}_\infty, \overline{\rho}_{f,i}(k-r) ) : \Lambda_i/\pi \Lambda_i \cdot \overline{\mathbf{z}} \right] < \infty$,
\item
$\overline{\mathbf{z}} \neq 0  \textrm{ in } \mathrm{H}^1_{\mathrm{Iw}}(\mathbb{Q}_S/\mathbb{Q}_\infty, T_{f,i}(k-r) )/\pi \textrm{ and } \# \mathrm{H}^2_{\mathrm{Iw}}(\mathbb{Q}_S/\mathbb{Q}_\infty, T_{f,i}(k-r) )[\pi] < \infty $.
\end{itemize}
Note that the following statements are also equivalent:
\begin{itemize}
\item $\# \mathrm{H}^2_{\mathrm{Iw}}(\mathbb{Q}_S/\mathbb{Q}_\infty, T_{f,i}(k-r) )[\pi] < \infty$,
\item $\mu \left(  \mathrm{H}^2_{\mathrm{Iw}}(\mathbb{Q}_S/\mathbb{Q}_\infty, T_{f,i}(k-r) ) \right)  = 0$.
\end{itemize}
\item
Due to (\ref{eqn:mod-p-indices}), it suffices to compute the index of
$$\Lambda_i / \pi \Lambda_i \cdot \overline{\mathbf{z}}
\subseteq \dfrac{ \mathrm{H}^1_{\mathrm{Iw}}(\mathbb{Q}_S/\mathbb{Q}_\infty, T_{f,i}(k-r) ) }{ \pi \mathrm{H}^1_{\mathrm{Iw}}( \mathbb{Q}_S/\mathbb{Q}_\infty, T_{f,i}(k-r) ) } .$$
By Lemma \ref{lem:key-lemma},
we have
$$\dfrac{
\mathrm{H}^1_{\mathrm{Iw}}(\mathbb{Q}_S/\mathbb{Q}_\infty, T_{f,i}(k-r) ) / (\mathbf{z}, \pi)
}{
\left( \mathrm{H}^1_{\mathrm{Iw}}(\mathbb{Q}_S/\mathbb{Q}_\infty, T_{f,i}(k-r) ) / \Lambda_i\mathbf{z} \right) [\pi]
}
= \# \mathbb{F}^{\lambda \left( \mathrm{H}^1_{\mathrm{Iw}}(\mathbb{Q}_S/\mathbb{Q}_\infty, T_{f,i}(k-r) ) / \Lambda_i\mathbf{z} \right)}$$
and $\left( \mathrm{H}^1_{\mathrm{Iw}}(\mathbb{Q}_S/\mathbb{Q}_\infty, T_{f,i}(k-r) ) / \Lambda_i\mathbf{z} \right) [\pi] = 0$ since $\overline{\mathbf{z}} \neq 0$.
Thus, the index is
$$\# \mathbb{F}^{\lambda \left( \mathrm{H}^1_{\mathrm{Iw}}(\mathbb{Q}_S/\mathbb{Q}_\infty, T_{f,i}(k-r) ) / \Lambda_i\mathbf{z} \right)}$$
and we obtain the conclusion. Note that we have an isomorphism  $$ \mathbb{H}^1(j_* T_{f,i}(k-r)) \simeq  \mathrm{H}^1_{\mathrm{Iw}}(\mathbb{Q}_S/\mathbb{Q}_{\infty}, T_{f, i}(k-r)) $$ by Proposition \ref{prop:localization-sequence}.
\end{enumerate}
\end{proof}
\begin{rem}
Indeed, if $\mathbf{z} = \mathbf{z}_{\mathrm{Kato}}(f, i, k- r)$, then
$\mu \left(  \mathrm{H}^1_{\mathrm{Iw}}(\mathbb{Q}_S/\mathbb{Q}_\infty, T_{f,i}(k-r) ) / \Lambda_i \mathbf{z} \right)  = 0$ implies $\mu \left(  \mathrm{H}^2_{\mathrm{Iw}}(\mathbb{Q}_S/\mathbb{Q}_\infty, T_{f,i}(k-r) ) \right)  = 0$
by using Theorem \ref{thm:kato-divisibility}, Proposition \ref{prop:localization-sequence}, and Proposition \ref{prop:iwasawa-invariants-H^2}. Thus, Proposition \ref{prop:comparison-iwasawa-invariants}.(1).(a) can be simplified as $\mu \left(  \mathrm{H}^1_{\mathrm{Iw}}(\mathbb{Q}_S/\mathbb{Q}_\infty, T_{f,i}(k-r) ) / \Lambda_i \mathbf{z} \right)  = 0$ when $\mathbf{z} = \mathbf{z}_{\mathrm{Kato}}(f, i, k- r)$.
\end{rem}
\begin{proof}[Proof of Theorem \ref{thm:main-theorem}.(2)]
Now we pin down $\Sigma_0 = S' = \lbrace \ell : \ell \mid N_f \cdot N_g \rbrace$.
If we choose
$$\mathbf{z} = \mathbf{z}^{\Sigma_0}_{\mathrm{Kato}}(f, i, k- r)  \textrm{ or } \mathbf{z}^{\Sigma_0}_{\mathrm{Kato}}(g, i, k- r) ,$$
then the LHS of (\ref{eqn:invariant-under-congruences}) depends only on $\overline{\rho}$ and $\Sigma_0$ due to Theorem \ref{thm:congruences-kato-elements} under the $\mu = 0$ assumption (Corollary \ref{cor:mu-invariants-vanish-in-families} and Proposition \ref{prop:comparison-iwasawa-invariants}.(1)).
Thus,  the following equalities are immediate.
\begin{align*}
  & \lambda \left( \dfrac{ \mathbb{H}^1(j_* T_{f,i}(k-r)) }{ \mathbf{z}_{\mathrm{Kato}}(f, i, k- r) } \right)
- \lambda \left( \mathbb{H}^2(j_* T_{f,i}(k-r)) \right) \\
= & \lambda \left( \dfrac{ \mathbb{H}^1(j_* T_{f,i}(k-r)) }{ \mathbf{z}^{\Sigma_0}_{\mathrm{Kato}}(f, i, k- r) } \right)
- \lambda \left( \mathbb{H}^{2, \Sigma_0}( j_* T_{f, i}(k-r) )  ) \right) & \textrm{(\ref{eqn:iwasawa-invariants-kato-elements}) and Proposition \ref{prop:iwasawa-invariants-H^2-imprimitive}}  \\
= & \lambda \left( \dfrac{ \mathbb{H}^1(j_* T_{g,i}(k-r)) }{ \mathbf{z}^{\Sigma_0}_{\mathrm{Kato}}(g, i, k- r) } \right)
- \lambda \left( \mathbb{H}^{2, \Sigma_0}( j_* T_{g, i}(k-r) ) \right) & \textrm{Proposition \ref{prop:comparison-iwasawa-invariants}.(2) }  \\
= & \lambda \left( \dfrac{ \mathbb{H}^1(j_* T_{g,i}(k-r)) }{ \mathbf{z}_{\mathrm{Kato}}(g, i, k- r) } \right)
- \lambda \left( \mathbb{H}^2(j_* T_{g,i}(k-r)) \right) & \textrm{(\ref{eqn:iwasawa-invariants-kato-elements}) and Proposition \ref{prop:iwasawa-invariants-H^2-imprimitive}} .
\end{align*}
Thus, Theorem \ref{thm:main-theorem}.(2) follows.
The final statement is independent of the choice of $\Sigma_0$.
\end{proof}
\begin{rem} \label{rem:iwasawa-invariants-H^2-2}
In \cite{gv} and its various successors, it is essential to identify the $\lambda$-invariants of Selmer groups over the Iwasawa algebra and their $\mathbb{Z}_p$-coranks (under the $\mu=0$ assumption) to apply the congruence argument in an appropriate setting.
In order to do this, one needs to prove that $\mathrm{Sel}(\mathbb{Q}_{\infty}, A_{f,0}(1))^\vee$ (or its variant) has no non-zero finite $\Lambda_0$-submodule. See \cite[Propositions 2.5 and 2.8]{gv} for detail.
In our setting, we do not expect such a statement (for $\mathbb{H}^2(j_*T_{f, i}(k-r))$) holds in general as described in Remark \ref{rem:iwasawa-invariants-H^2}. A similar flavor can also be found in \cite{hachimori}.
\end{rem}

\section*{Acknowledgement}
This project is inspired from Project A\footnote{\href{http://swc.math.arizona.edu/aws/2018/2018SkinnerProjects.pdf}{http://swc.math.arizona.edu/aws/2018/2018SkinnerProjects.pdf}} of Skinner's project group at Arizona Winter School 2018.
We deeply appreciate Chris Skinner and Francesc Castella for wonderful lectures and great helps during the school.
We also thank to other members of the project group: Takahiro Kitajima, Rei Otsuki, Sheng-Chi Shih, Florian Sprung, and Yiwen Zhou.
We thank David Loeffler and Sarah Zerbes very much for the helpful suggestions and discussions in the Arizona Winter School.
We thank Olivier Fouquet, Antonio Lei, David Loeffler, and Xin Wan for helpful comments and Takenori Kataoka for pointing out a gap in an earlier version of this paper.
We deeply appreciate the referee for various helpful and valuable suggestions and comments.
Following suggestions and comments, the exposition is significantly improved and a number of inaccuracies are corrected.
All the authors contributed equally.

\bibliographystyle{amsalpha}
\bibliography{library-klp}
\end{document}